\documentclass[12pt]{article}
\usepackage{fancyhdr}
\pagestyle{fancy}
\fancyhf{}% Clear header/footer
\fancyhead[C]{%
  \ifodd\value{page}\relax
    Multiprojective Bundles and
    Projective Towers% Odd page header
  \else
    Bansal A., Sarkar S., Vats S.% Even page header
  \fi}
\fancyfoot[C]{\thepage}

%\pagestyle{fancy}
%\fancyhf{}
%\lhead{\runauthor}
%\rhead{\runtitle}
%\fancyhead[E]{Multiprojective Bundles and
%Projective Towers}
%\fancyfoot{\thepage}
%\fancyhead[O]{Bansal}
%\fancyfoot{\thepage}
\usepackage{hyperref}
\usepackage{graphicx} % Required for inserting images
\usepackage{amsfonts}
\usepackage{amsthm,amssymb,amsmath}
\usepackage[T1]{fontenc}
\usepackage{mathabx}
\usepackage[utf8]{inputenc}
\usepackage{hyperref}
\usepackage{lipsum}
\usepackage{mathrsfs}
\usepackage{enumitem}
\usepackage{MnSymbol}
\usepackage{stmaryrd}
\usepackage{mathtools}
\usepackage{float}
\usepackage[all,cmtip]{xy}
\usepackage{tikz-cd}
\tikzcdset{row sep/normal=50pt, column sep/normal=50pt}
\newtheorem{lemma}{Lemma}[section]

\newtheorem{theorem}[lemma]{Theorem}
\newtheorem{example}[lemma]{Example}

\newtheorem{corollary}[lemma]{Corollary}

\usepackage{blindtext}
\usepackage[a4paper, total={6in, 8in}]{geometry}
\title{Isomorphism of Multiprojective Bundles and Projective Towers}
\author{Ashima Bansal, Supravat Sarkar, Shivam Vats}
\date{}
\begin{document}
\maketitle
\begin{abstract}
We study when two projective bundles over two arbitrary smooth projective varieties of different dimensions can be isomorphic. We show that two multi-projective bundles (fibre product of projective bundles) over different projective spaces cannot be isomorphic, except in the trivial case. We also give necessary and sufficient conditions for the top varieties of two height 3 towers of projective bundles being isomorphic, under certain assumptions.
\end{abstract}
\begin{center}
\textbf{Keywords}: Projective bundle, Chow ring, Multiprojective bundle, Projective tower
\end{center}
\begin{center}
    \textbf{MSC Number:} 14M99
\end{center}

\section{Introduction} 
 Projective spaces and multiprojective spaces (products of projective spaces) are the simplest examples of complete algebraic varieties. Relativising the constructions of projective and multi-projective spaces, one gets the notion of projective bundles and multi-projective bundles, the latter being the fibre product of projective bundles over the same base. With these constructions in mind, one is naturally led to study projective bundles and multi-projective bundles over projective spaces. A natural question in this area is when two multi-projective bundles over different projective spaces are isomorphic. In \ref{9}, this question is answered for projective bundles. One of our goals in this article is to completely answer this question for multiprojective bundles.

  In section 3, we study when two projective bundles over two arbitrary smooth projective varieties can be isomorphic. This question was studied in \ref{10} under some assumption on the lower bound of the dimension of the projective bundle in terms of the dimensions of the bases. We study this question without any assumption on the lower bound on the dimension of the projective bundle, but with the assumption that one of the vector bundles is trivial. Our result is:
\\\\
\textbf{Theorem A} (Theorem \ref{theorem:theorem 3.12}): Let $X $ and $ Y$ be smooth projective varieties of dimension $m$ and $n$ respectively. Let $E$ and $ F$ be vector bundles of rank $r+1$ and $s+1$ over $X$ and $Y$ respectively. Suppose   
$ \mathbb{P}_X(E)\cong \mathbb{P}_Y(F) $ and $ E$ is trivial. Then one of the following holds:
\begin{enumerate}
\item[(i)] $X\cong Y$ and F is trivial up to line bundle twists.
\item[(ii)] There exists a smooth projective variety $Z$ and two bundles $ E_{1}$ and $ F_{1} $ on $Z $ of ranks $s+1, r+1$ respectively, such that $F_{1}$ is trivial and 
$ X\cong \mathbb{P}_Z(E_{1})  $ and $Y = \mathbb{P}_Z(F_{1}) =Z\times \mathbb{P}^r  $.
\end{enumerate}

We also obtain several results describing when a multiprojective bundle has a special kind of product structure.

In section 4, we first describe the precise condition that is imposed on the vector bundles by the assumption that the Chow rings of the corresponding projectivisations are isomorphic. As a corollary, we give a partial topological analogue of the result in \ref{9}.  Then we answer the question we raised in the beginning about two multi-projective bundles over different projective spaces being isomorphic. Our result is:
\\\\
\textbf{Theorem B} (Theorem \ref{theorem:theorem 4.4})
\textit{Let $m\neq n$ be positive integers, $E_{1},...,E_{r}$ be vector bundles on $\mathbb{P}^m$ and $F_{1},...,F_{s} $  vector bundles on $ \mathbb{P}^n$ of rank at least two. Let $p_{i}, q_{j}$ be the positive integers such that $\mathrm{rk}(E_{i})= p_{i}+ 1 $ and $\mathrm{rk}(F_{j})= q_{j}+ 1 $ for all $i, j$. Then the following are equivalent:
\begin{enumerate}
\item[(i)] $\mathbb{P}_{\mathbb{P}^m}(E_{1},...,E_{r}) \cong 
\mathbb{P}_{\mathbb{P}^n}(F_{1},..., F_{s})$;
\item[(ii)] $r=s,  \{ m, p_{1},...,p_{r} \}= \{ n, q_{1},...,q_{r} \} $ as multisets and all $ E_{i}$ and $F_{j} $ are trivial bundles up to line bundle twists.
\end{enumerate}}

A projective tower of height $m$ is a sequence
\begin{equation*}
C_{m}\xlongrightarrow{\pi_{m}}C_{m-1}\xlongrightarrow{\pi_{m-1}}...\xlongrightarrow{\pi_{2}}C_{1}\xlongrightarrow{\pi_{1}}C_{0} = \{point\}
\end{equation*}
where $C_{i} = \mathbb{P}(\xi_{i-1})$ is the projectivization of a vector bundle $\xi_{i-1}$ over $C_{i-1}$. Complex projective towers were studied from a topological point of view in \ref{5}, where, in low dimensions, authors gave necessary and sufficient conditions for the top manifolds of two complex projective towers to be homeomorphic. To the best of our knowledge, the only analogous work in algebraic geometry done so far is in \ref{9}, for height 2 projective towers. At the end of section 4, we extend this work to the next stage, for height 3 projective towers, under certain assumptions. Our result is the following:
\\\\
\textbf{Theorem C} (Theorem \ref{theorem:theorem 4.5})
\textit{Let $m,n,r$ be positive integers with $m<n$. Let $ E_{1} $ and $F_{1}$  be vector bundles of rank $r+1$ over $ \mathbb{P}^m,\mathbb{P}^n$ respectively and   $ E_{2},F_{2}$ be vector bundles of rank $ n+1,m+1$ over $ \mathbb{P}_{\mathbb{P}^{m}}(E_{1}), \mathbb{P}_{\mathbb{P}^{n}}(F_{1})$ respectively. Suppose $ \mathbb{P}_{\mathbb{P}(E_{1})}(E_{2}) \cong \mathbb{P}_{\mathbb{P}(F_{1})}(F_{2})$. Then:  
\begin{enumerate}
\item[(i)] If $ r \neq m,n $ then all $ E_{i}$'s, $F_{j}$'s are trivial up to  line bundle twists. 
\item[(ii)] If $ r=m $, then $ E_{2}, F_{1}$ are trivial upto line bundle twists. There is a trivialization of $F_1$ such that if $ \mathbb{P}(F_{1})= \mathbb{P}^n \times \mathbb{P}^m \xlongrightarrow{ \pi} \mathbb{P}^m$  denotes the second  projection, then $ F_{2} = \pi^{\ast} E_{1} \bigotimes L$ for some line bundle $L$ on $ \mathbb{P}^n \times  \mathbb{P}^m .$
\item[(iii)] If $ r=n $ then $(ii)$ holds with $m$ replaced by n and $E_{i}$ replaced by $F_{i}$.\end{enumerate}}
\noindent
\section{Notations and Conventions}
\begin{enumerate}
\item Let $k$ be an algebraically closed field of any characteristic. By a variety we mean an integral separated $k$-scheme of finite type over $k$. By a point on a variety, we always mean a closed point. Any isomorphism of varieties is an isomorphism over $k$. The variety Spec $k$ is denoted by pt (abbreviation of "point").
\item Let $A^{\ast}(X)$ denote the Chow ring of a smooth projective variety $X$. We identify Pic$(X)=A^1(X)$, where by Pic(X) we mean the Picard group of $X$. For a vector bundle $E$ over a smooth projective variety $X$, $c(E)\in A^{\ast}(X)$ denotes the total Chern class of $E$. For definitions and results about the Chow ring and Chern classes see \ref{1}, \ref{2}.
\item We follow the convention of the projective bundle as in  [\ref{8}, chapter 2, section 7]. We shall use the following results frequently:
\begin{enumerate}
\item[(a)] If $p:\mathbb{P}(E)\to X $ is a projective bundle, then $p_*\mathcal{O}_{\mathbb{P}(E)}=\mathcal{O}_X.$
\item[(b)] If $E, F$ are vector bundles of rank $n+1$ over a smooth projective variety $X$, and $\phi:\mathbb{P}(E)\to\mathbb{P}(F)$ an isomorphism over $X,$ then $\phi$ is an isomorphism of the corresponding PGL$_{n+1} -$bundles, and $E\cong F\otimes L$ for some line bundle $L$ on $X.$
\item[(c)](See \ref{11}, Appendix A). If $E$ is a vector bundle of rank $r+1$ on a smooth projective variety $X$, then under the natural pullback we have,$\vspace{1mm}$ $A^{\ast}(\mathbb{P}(E))\cong {A^{\ast}(X)[u]}/(\sum_{i = 0}^{r+1}(-1)^{i}{c_{i}(E)u^{r+1-i}})$ as $A^{\ast}(X)$-algebra. The isomorphism is given $\vspace{1mm}$ by $[\mathcal{O}_{\mathbb{P}(E)}(1)]\longleftarrow u$.
\end{enumerate}
\item The following fact follows from the formula for the Chern class of tensor product of a vector bundle and a line bundle (for example, see \ref{1}, example 3.2.2 or \ref{12}, chapter 1).

\textbf{Fact:} Let $E$ be a rank $r+1$ vector bundle on $\mathbb{P}^n$, and $c(E)=(1+ac_1(\mathcal{O}_{\mathbb{P}^{n}}(1)))^{r+1}$ for some integer $a$, then $c(E\otimes \mathcal{O}_{\mathbb{P}^{n}}(-a))=1.$

\item Let $ E_{1},..., E_{r}$ be vector bundles over a variety $X$. We denote the fibre product $ \mathbb{P}(E_{1}) \times_{X} \mathbb{P}(E_{2}) \times_{X}...\times_{X }\mathbb{P}(E_{r})$ by $ \mathbb{P}_{X}(E_{1},...,E_{r})$, and we call this a multiprojective bundle over $X$. When there is no ambiguity we shall not write the subscript $X$.
\item For a free abelian group $A$ of finite rank we denote the rank of $A$ by rk$\medspace A$. If $E$ is a vector bundle over a variety $X$ then rk$\medspace E$ denotes the rank of $E$.
\item For a smooth projective variety $X$, we denote the Brauer group of $X$ by Br($X$).
\item We say that a vector bundle $E$ over a variety $X$ is trivial up to (or with) a line bundle twist if there is a line bundle $L$ on $X$ with $E\otimes L$ trivial vector bundle.
\item If $P(X_1,..., X_n)$ is a polynomial with complex coefficients, we denote by deg$_{X_i}P$ the $X_i$-degree of $P$. The degree of the zero polynomial is $-\infty$ by convention.
\end{enumerate}
\begin{section}{Projective bundles over smooth projective varieties}
We shall use the following lemmas.
\begin{lemma}
Let $f:X\longrightarrow Y$ be a proper morphism of varieties such that $f_{\ast}\mathcal{O}_{X} = \mathcal{O}_{Y}$. Let $E$ be a vector bundle of rank $n$ over $Y$. If $f^{\ast}E$ is trivial up to line bundle twist, so is $E$.
\end{lemma}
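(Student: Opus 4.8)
The plan is to descend the triviality along $f$ by pushing forward and invoking the projection formula. First I would unwind the hypothesis: since $f^{\ast}E$ is trivial up to a line bundle twist, there is a line bundle $M$ on $X$ with $f^{\ast}E\otimes M\cong\mathcal{O}_{X}^{\oplus n}$, that is, $f^{\ast}E\cong(M^{\vee})^{\oplus n}$. It therefore suffices to produce a single line bundle $\mathcal{L}$ on $Y$ with $E\cong\mathcal{L}^{\oplus n}$, for then $E\otimes\mathcal{L}^{\vee}\cong\mathcal{O}_{Y}^{\oplus n}$ is trivial, which is exactly the claim.

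Now apply $f_{\ast}$ to the isomorphism $f^{\ast}E\cong(M^{\vee})^{\oplus n}$. Since $E$ is locally free, the projection formula gives $f_{\ast}f^{\ast}E\cong E\otimes f_{\ast}\mathcal{O}_{X}$, which equals $E$ by the standing hypothesis $f_{\ast}\mathcal{O}_{X}=\mathcal{O}_{Y}$; and since $f_{\ast}$ is additive, $f_{\ast}\big((M^{\vee})^{\oplus n}\big)\cong\big(f_{\ast}M^{\vee}\big)^{\oplus n}$, where $\mathcal{F}:=f_{\ast}M^{\vee}$ is coherent because $f$ is proper. Combining these, $E\cong\mathcal{F}^{\oplus n}$ as $\mathcal{O}_{Y}$-modules.

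It then remains to check that $\mathcal{F}$ is a line bundle. For each point $y\in Y$ the stalk $E_{y}$ is free of rank $n$ over $\mathcal{O}_{Y,y}$ and $\mathcal{F}_{y}^{\oplus n}\cong E_{y}$, so $\mathcal{F}_{y}$ is a finitely generated direct summand of a free module over the local ring $\mathcal{O}_{Y,y}$, hence free, and comparing ranks forces $\mathcal{F}_{y}\cong\mathcal{O}_{Y,y}$. Finally, a coherent sheaf whose stalks are all free of rank one is invertible: lift a generator of $\mathcal{F}_{y}$ to a morphism $\mathcal{O}_{U}\to\mathcal{F}|_{U}$ that is an isomorphism at $y$, and by coherence and Nakayama it is an isomorphism on a neighbourhood of $y$. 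Hence $\mathcal{F}$ is a line bundle with $E\cong\mathcal{F}^{\oplus n}$, completing the proof. The computation is essentially formal; the only step needing a moment's care is this last one — that the a priori unwieldy sheaf $f_{\ast}M^{\vee}$ is forced to be locally free of rank one merely because its $n$-fold direct sum is locally free of rank $n$ — while the hypothesis $f_{\ast}\mathcal{O}_{X}=\mathcal{O}_{Y}$ enters precisely to identify $f_{\ast}f^{\ast}E$ with $E$.
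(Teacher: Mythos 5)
Your proof is correct and follows essentially the same route as the paper's: push forward the trivialization, use the projection formula together with $f_{\ast}\mathcal{O}_{X}=\mathcal{O}_{Y}$ to get $E\cong(f_{\ast}M^{\vee})^{\oplus n}$, and then observe that a coherent sheaf whose $n$-fold direct sum is locally free of rank $n$ must itself be a line bundle. The only difference is that you spell out the stalk-level argument for this last step, which the paper compresses into the remark that a direct summand of a vector bundle is a vector bundle.
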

\begin{proof}
Let $f^{\ast}E = \mathcal{O}_{X}^{\bigoplus n}\otimes L = L^{\bigoplus{n}}$, where $L$ is a line bundle over $X$. Applying $f_{\ast}$ and using projection formula we get $E\otimes f_{\ast}\mathcal{O}_{X} = (f_{\ast}L)^{\bigoplus{n}}$. Since $f_{\ast}\mathcal{O}_{X} = \mathcal{O}_{Y}$, we get $(f_{\ast}L)^{\bigoplus{n}} = E$. Now $f_{\ast}L$ is a direct summand of vector bundle $E$, so $f_{\ast}L$ must itself be a vector bundle and on comparing the ranks we see that $f_{\ast}L$ must be a line bundle. So, $E = f_{\ast}L\otimes\mathcal{O}_{Y}^{\bigoplus n}$ is trivial up to line bundle twist.    
\end{proof}
\begin{lemma}
Let $X$, $Y$ be varieties and $f: X\longrightarrow Y$ a proper morphism. Suppose that the scheme-theoretic fibre $f^{-1}\{y\}$ is isomorphic to  $\mathrm{Spec}\medspace k$, for all $\medspace y$ in $Y$. Then $f$ is an isomorphism.    
\end{lemma}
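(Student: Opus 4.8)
The plan is to deduce, in turn, that $f$ is finite, then a closed immersion, and finally an isomorphism. Since each scheme-theoretic fibre $f^{-1}\{y\}$ is $\mathrm{Spec}\, k$, it is in particular a finite $k$-scheme, so $f$ has finite fibres and is therefore quasi-finite; being moreover proper, $f$ is finite, by the standard fact that a proper quasi-finite morphism is finite. In particular $f$ is affine, so $X\cong \mathrm{Spec}_Y\mathcal{A}$, where $\mathcal{A}:=f_{\ast}\mathcal{O}_X$ is a coherent sheaf of $\mathcal{O}_Y$-algebras.

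Next I would show that the structure map $\mathcal{O}_Y\to\mathcal{A}$ is surjective, which may be checked on stalks at closed points, since $Y$ is a variety (hence Jacobson) and the cokernel is coherent. Fix a closed point $y\in Y$; as $k$ is algebraically closed, $k(y)=k$, and the scheme-theoretic fibre is $\mathrm{Spec}\bigl(\mathcal{A}_y\otimes_{\mathcal{O}_{Y,y}}k(y)\bigr)$. By hypothesis this ring is $k$, a one-dimensional $k$-vector space, necessarily spanned by the image of $1\in\mathcal{A}_y$ (which is nonzero there, the fibre being nonempty). Since $\mathcal{A}_y$ is a finite $\mathcal{O}_{Y,y}$-module, Nakayama's lemma shows that $1$ generates $\mathcal{A}_y$ over $\mathcal{O}_{Y,y}$, i.e. $\mathcal{O}_{Y,y}\to\mathcal{A}_y$ is surjective. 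Hence $\mathcal{O}_Y\to\mathcal{A}$ is surjective and $f$ is a closed immersion onto a closed subscheme $Z\subseteq Y$.

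To conclude, note that $f$ is surjective on points since every fibre $\mathrm{Spec}\, k$ is nonempty, so $Z$ has underlying space all of $Y$. But $Z\cong X$ is a variety, hence reduced, and the only reduced closed subscheme of $Y$ supported on all of $Y$ is $Y$ itself (its ideal sheaf is locally contained in the nilradical, which is $0$ since $Y$ is reduced). Therefore $Z=Y$ and $f$ is an isomorphism.

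The one input that is not bare-hands is the implication ``proper and quasi-finite $\Rightarrow$ finite'' (a form of Zariski's main theorem); after that, the argument is just Nakayama's lemma together with the reducedness of $X$ and $Y$. The only point needing care is that Nakayama applies because $\mathcal{A}_y$ is genuinely finitely generated over $\mathcal{O}_{Y,y}$ — which is exactly the finiteness of $f$ — and that the module generator may be taken to be the unit $1$.
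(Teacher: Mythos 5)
Your argument is correct and follows essentially the same route as the paper: reduce to a finite morphism via Zariski's main theorem, then apply Nakayama's lemma to the stalks of $f_{\ast}\mathcal{O}_X$ using the fibre condition. The only cosmetic difference is at the end: the paper uses the inclusion $A\subset B$ (from $X$, $Y$ integral and $f$ dominant) to get $A=B$ directly, whereas you establish surjectivity of $\mathcal{O}_Y\to f_{\ast}\mathcal{O}_X$ (a closed immersion) and then invoke surjectivity of $f$ and reducedness of $Y$ — both are fine.
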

\begin{proof}
As $f$ is proper and quasi-finite, so $f$ is finite by Zariski's Main Theorem. Let $U =$ Spec$\medspace A$ be an affine open subset in $Y$ and $f^{-1}U =$ Spec$\medspace B.$  We have a ring inclusion $A\subset B$ induced by $f$ and $B$ is a finitely generated $A$-module.
We need to show $A = B$, it suffices to show $A_{\mathfrak{m}} = B_{\mathfrak{m}},$ for all maximal ideals $\mathfrak{m}$ of $A$. By our assumption, $\mathfrak{m}B$ is a maximal ideal of $B$ hence $\mathfrak{\mathfrak{m}}B + A = B$. 
 After localizing we get $\mathfrak{m} A_{\mathfrak{m}}.B_{\mathfrak{m}}+A_{\mathfrak{m}} = B_{\mathfrak{m}}$ so $A_{\mathfrak{m}} = B_{\mathfrak{m}}$ by Nakayama's Lemma as $B_{\mathfrak{m}}$ is a finitely generated $A_{\mathfrak{m}}$-module.

\end{proof}

\begin{lemma}
Let $X$ be a projective variety, and let $\mathbb{P}^{n}\xlongrightarrow{f} X$ be a surjective map. Then either $X$ is a point or the dimension of $X$ is n.
\end{lemma}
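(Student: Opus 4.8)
The plan is to use the fact that $\mathrm{Pic}(\mathbb{P}^{n})$ has rank one, so that the pullback under $f$ of an ample class on $X$ is forced to be a multiple of the hyperplane class, and then to check this multiple is strictly positive whenever $X$ is not a point. Suppose $X$ is not a point, so that $\dim X \ge 1$; since $X = f(\mathbb{P}^{n})$ is the image of a morphism, $\dim X \le n$ automatically, and it remains to prove $\dim X \ge n$. Fix an ample line bundle $H$ on $X$ (available since $X$ is projective). Because $\mathrm{Pic}(\mathbb{P}^{n}) = \mathbb{Z}\cdot[\mathcal{O}_{\mathbb{P}^{n}}(1)]$, we may write $f^{\ast}H \cong \mathcal{O}_{\mathbb{P}^{n}}(a)$ for a unique integer $a$.

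Next I would determine the sign of $a$. The pullback of a nef line bundle is nef, and on $\mathbb{P}^{n}$ the bundle $\mathcal{O}_{\mathbb{P}^{n}}(a)$ is nef exactly when $a \ge 0$, so $a \ge 0$. If $a = 0$, then $f^{\ast}(H^{\otimes m}) \cong \mathcal{O}_{\mathbb{P}^{n}}$ for every $m$; choosing $m$ so that $H^{\otimes m}$ is very ample and embeds $X \hookrightarrow \mathbb{P}^{N}$, the composite $\mathbb{P}^{n} \to X \hookrightarrow \mathbb{P}^{N}$ is given by sections lying in $H^{0}(\mathbb{P}^{n},\mathcal{O}_{\mathbb{P}^{n}}) = k$, hence is a constant map, forcing $X = f(\mathbb{P}^{n})$ to be a point --- a contradiction. (Alternatively: pick a line $\ell \subset \mathbb{P}^{n}$ not contracted by $f$, which exists because otherwise every line, hence all of $\mathbb{P}^{n}$, would land in a single fibre of $f$; then $a = \deg(\mathcal{O}_{\mathbb{P}^{n}}(a)|_{\ell}) = \deg(f^{\ast}H|_{\ell}) = (H \cdot f_{\ast}[\ell]) > 0$, since $H$ is ample and $f_{\ast}[\ell] \ne 0$.) In either case $a \ge 1$.

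Finally, since $a \ge 1$, the line bundle $f^{\ast}H \cong \mathcal{O}_{\mathbb{P}^{n}}(a)$ is ample on $\mathbb{P}^{n}$. An ample line bundle has strictly positive degree on every irreducible curve, so $f$ contracts no curve; equivalently, every fibre of $f$ is finite. Hence $f$ is proper and quasi-finite, therefore finite, so it is a dominant morphism of irreducible varieties with zero-dimensional generic fibre, and $n = \dim\mathbb{P}^{n} = \dim X$, as claimed.

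I expect the only delicate point to be the sign analysis of $a$: verifying that ``$X$ is not a point'' forces the pullback of an ample class to be a \emph{strictly} positive multiple of the hyperplane class. The remaining ingredients --- the rank-one Picard group of $\mathbb{P}^{n}$, ampleness of the pullback forcing finiteness, and finiteness preserving dimension --- are standard.
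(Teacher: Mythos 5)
Your proof is correct and follows essentially the same route as the paper: pull back an ample class on $X$ to $\mathbb{P}^n$, observe it must be $\mathcal{O}_{\mathbb{P}^n}(a)$ with $a\ge 0$, and split into the case $a=0$ (forcing $f$ constant, so $X$ is a point) and $a>0$ (forcing $f^\ast H$ ample, hence $f$ finite and $\dim X=n$). The paper gets $a\ge 0$ from global generation of $f^\ast H$ for $H$ very ample rather than from nefness, but this is an immaterial difference.
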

\begin{proof}
Let $H$ be a very ample line bundle on $X$. Then $f^\ast H$ is a globally generated line bundle. So either $f^\ast H = \mathcal{O}_{\mathbb{P}^{n}}$, in this case $f$ is constant so $X =$ pt, or $f^\ast H = \mathcal{O}_{\mathbb{P}^{n}}(k)$ for some $k>0$, hence both $H$ and $f^\ast H$ are ample, so $f$ is a finite map and dim$\medspace X$ is $n$.
\end{proof}

\begin{lemma}
If $X$ is a smooth projective variety and there is a nonconstant surjective map 
$\mathbb{P}^{n}\xlongrightarrow{f} X$, then $X$ is isomorphic to $\mathbb{P}^n$.
\end{lemma}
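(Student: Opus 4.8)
The plan is to reduce, via the preceding lemmas, to the case of a \emph{finite} surjective morphism, and then to recognise $X$ as a Fano variety swept out by rational curves of large anticanonical degree, which forces $X\cong\mathbb{P}^n$ by the Mori-theoretic characterisation of projective space.

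First I would apply the previous lemma: since $f$ is surjective and nonconstant, $\dim X=n$, and (as in the proof of that lemma, where $f^*H$ is shown to be ample for $H$ very ample on $X$) $f$ is a finite surjective morphism, say of degree $d\ge 1$. If $n=1$ we are done immediately, because then $X$ is a smooth projective curve dominated by $\mathbb{P}^1$, hence of genus $0$, hence $X\cong\mathbb{P}^1$; so assume $n\ge 2$. Since $X$ is smooth, the ramification formula gives $K_{\mathbb{P}^n}=f^*K_X+R$ with $R\ge 0$ effective; writing $H$ for the hyperplane class, $f^*(-K_X)\equiv (n+1)H+R$ is ample, and as $f$ is finite surjective this forces $-K_X$ to be ample, so $X$ is Fano. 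Pulling back $1$-cycles also shows $f^*\colon N^1(X)\otimes\mathbb{Q}\hookrightarrow N^1(\mathbb{P}^n)\otimes\mathbb{Q}=\mathbb{Q}$, so $X$ has Picard number one.

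Next I would analyse the images of lines. For a \emph{general} line $\ell\subset\mathbb{P}^n$ I claim $f|_\ell\colon\ell\to C:=f(\ell)$ is birational onto its image (note $C$ is a curve, not a point, since $f$ is finite). Otherwise $f|_\ell$ has degree $\ge 2$ for all $\ell$ in a dense open of the Grassmannian $\mathbb{G}(1,n)$ of lines; but then the ``non-injectivity locus'' $W=\{(x,x')\in\mathbb{P}^n\times\mathbb{P}^n: x\ne x',\ f(x)=f(x')\}$ would contain, over each such $\ell$, a positive-dimensional family of collinear pairs, whence $\dim W\ge(2n-2)+1=2n-1$; on the other hand $W$ maps to $X$ by $(x,x')\mapsto f(x)$ with finite fibres (as $f$ is finite), so $\dim W\le n$, contradicting $n\ge 2$. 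Granting birationality, $f_*[\ell]=[C]$, so $-K_X\cdot C=f^*(-K_X)\cdot\ell=(n+1)+(R\cdot\ell)\ge n+1$. Moreover these curves cover $X$: for a general $x\in X$ one picks a point $p$ of the finite set $f^{-1}(x)$ and a general line through $p$ (such a line avoids the proper closed ``bad'' locus just excluded), producing a rational curve of anticanonical degree $\ge n+1$ through $x$.

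Finally I would invoke the characterisation of projective space: a smooth projective variety of dimension $n$ carrying a covering family of rational curves whose general member $C$ satisfies $-K_X\cdot C\ge n+1$ is isomorphic to $\mathbb{P}^n$ (Cho--Miyaoka--Shepherd-Barron and Kebekus); in our situation this is precisely Lazarsfeld's theorem that a smooth projective variety admitting a surjective morphism from $\mathbb{P}^n$ must be $\mathbb{P}^n$, which one may simply cite. This last step is the genuine obstacle: the reduction to a finite map and the curve count are elementary, but deducing $X\cong\mathbb{P}^n$ from the mere existence of such high-degree rational curves uses the deformation theory of rational curves (bend-and-break) in an essential way, and Chow-ring arguments alone cannot suffice (fake projective spaces have the same rational Chow ring). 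One further point to watch in positive characteristic: if $f$ is inseparable it factors through a power of the Frobenius of $\mathbb{P}^n$, which reduces matters to the separable case, where the argument above (in its characteristic-$p$ form) applies.
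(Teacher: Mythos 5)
Your proposal ultimately rests on citing the known theorem that a smooth projective variety dominated by $\mathbb{P}^n$ via a nonconstant map is $\mathbb{P}^n$, which is exactly what the paper does: its entire proof is a reference to Lazarsfeld's Mori-theoretic result. The Fano/covering-family-of-lines sketch you add is a reasonable outline of how that cited theorem is actually proved (modulo the separable-degree caveats you yourself flag in positive characteristic), but it is supplementary rather than a genuinely different route.
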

\begin{proof}
See \ref{3}, where Lazarsfeld proved this using Mori theory.
\end{proof}

\begin{lemma}
Let $X$ and $Y$ be smooth projective varieties and $X\times\mathbb{P}^{n}\xlongrightarrow{f}Y$ a morphism such that $f_{\ast}\mathcal{O}_{X\times\mathbb{P}^{n}} = \mathcal{O}_{Y}$. Then there exist varieties $Z$ and $T$ such that $Y = Z\times T$ and  morphisms $f_{1}: X\longrightarrow Z$ and $f_{2}: \mathbb{P}^{n}\longrightarrow T$ such that $f = f_{1}\times f_{2}$. 
\end{lemma}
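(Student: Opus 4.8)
The plan is to show that, after composing with the closed immersion defined by a very ample line bundle on $Y$, the morphism $f$ is \emph{literally} a Segre--Veronese map, and then to read off the product decomposition. Fix a very ample line bundle $H$ on $Y$, and write $\phi_{|H|}\colon Y\hookrightarrow \mathbb{P}(H^{0}(Y,H)^{\ast})$ for the corresponding closed immersion. I would begin with two reductions. First, $f$ is surjective: the structure map $\mathcal{O}_{Y}\to f_{\ast}\mathcal{O}_{X\times\mathbb{P}^{n}}$ is an isomorphism, hence injective, so $f$ is dominant, and since $f$ is proper its image is closed, hence all of $Y$. Second, by the projection formula $f_{\ast}f^{\ast}H = H\otimes f_{\ast}\mathcal{O}_{X\times\mathbb{P}^{n}} = H$, so the natural map $H^{0}(Y,H)\to H^{0}(X\times\mathbb{P}^{n},f^{\ast}H)$ is an isomorphism; equivalently, the composite $\phi_{|H|}\circ f$ is the morphism attached to the \emph{complete} linear system $|f^{\ast}H|$. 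This second reduction is the one place where the hypothesis $f_{\ast}\mathcal{O}_{X\times\mathbb{P}^{n}}=\mathcal{O}_{Y}$ is really used, and it is the crux of the proof.

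Next I would pin down $f^{\ast}H$. Let $p\colon X\times\mathbb{P}^{n}\to X$ and $q\colon X\times\mathbb{P}^{n}\to\mathbb{P}^{n}$ be the two projections. Since $X\times\mathbb{P}^{n}=\mathbb{P}_{X}(\mathcal{O}_{X}^{\oplus(n+1)})$, the projective bundle formula gives $\mathrm{Pic}(X\times\mathbb{P}^{n})=p^{\ast}\mathrm{Pic}(X)\oplus\mathbb{Z}\cdot q^{\ast}\mathcal{O}_{\mathbb{P}^{n}}(1)$, so $f^{\ast}H=p^{\ast}A\otimes q^{\ast}\mathcal{O}_{\mathbb{P}^{n}}(d)$ for a unique $A\in\mathrm{Pic}(X)$ and a unique integer $d$. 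As $f^{\ast}H$ is globally generated (being a pullback of $H$), restricting it to a fibre of $q$ shows that $A$ is globally generated, and restricting it to a fibre of $p$ shows $d\ge 0$. By the K\"unneth formula, $H^{0}(X\times\mathbb{P}^{n},f^{\ast}H)=H^{0}(X,A)\otimes_{k}H^{0}(\mathbb{P}^{n},\mathcal{O}_{\mathbb{P}^{n}}(d))$.

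The key formal observation is that the morphism attached to the complete linear system of a box product of globally generated line bundles on a product of varieties is the Segre composite of the two individual complete-linear-system morphisms: evaluation at a point $(x,p)$ produces the functional on $H^{0}(X,A)\otimes H^{0}(\mathbb{P}^{n},\mathcal{O}(d))$ obtained by evaluating the two tensor factors at $x$ and at $p$, and this functional is precisely $\mathrm{Seg}\bigl(\phi_{|A|}(x),\,v_{d}(p)\bigr)$, where $v_{d}=\phi_{|\mathcal{O}_{\mathbb{P}^{n}}(d)|}$ is the $d$-uple Veronese map of $\mathbb{P}^{n}$ (an embedding when $d\ge1$, a single point when $d=0$) and $\phi_{|A|}\colon X\to\overline{X}:=\phi_{|A|}(X)$ is the map of the complete system $|A|$. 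Hence $\phi_{|H|}\circ f=\mathrm{Seg}\circ(\phi_{|A|}\times v_{d})$. Since $\phi_{|H|}$ is a closed immersion and $f$ is surjective, $\phi_{|H|}(Y)$ equals the image of $\mathrm{Seg}\circ(\phi_{|A|}\times v_{d})$, namely $\mathrm{Seg}\bigl(\overline{X}\times v_{d}(\mathbb{P}^{n})\bigr)$; because the Segre map is a closed immersion, this yields an isomorphism $Y\cong\overline{X}\times v_{d}(\mathbb{P}^{n})$ under which $f$ becomes $\phi_{|A|}\times v_{d}$. So one takes $Z=\overline{X}$, $T=v_{d}(\mathbb{P}^{n})$, $f_{1}=\phi_{|A|}$ and $f_{2}=v_{d}$; here $T\cong\mathbb{P}^{n}$ exactly when $d\ge1$ and $T$ is a point exactly when $d=0$, and $Z$ is a point exactly when $A$ is trivial, so the degenerate shapes of $Y$ are all recovered uniformly.

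The only step I expect to require genuine care is the interplay between the first reduction and this Segre--Veronese identification: it must be $f_{\ast}\mathcal{O}_{X\times\mathbb{P}^{n}}=\mathcal{O}_{Y}$, not merely surjectivity of $f$, that forces $f$ to be cut out by the \emph{full} linear system $|f^{\ast}H|$ rather than by a proper sub-system --- without this, $f$ would only be a linear projection of the Segre--Veronese map and the statement genuinely fails (for instance a nontrivial finite map $\mathbb{P}^{n}\to\mathbb{P}^{n}$ composed with $\mathrm{id}_{X}$ is not a product in the required strong sense). Everything else is formal manipulation with linear systems; in particular this route uses neither Lemma 3.3 nor Lemma 3.4.
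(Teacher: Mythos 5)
Your proof is essentially correct, but be aware that the paper does not actually prove this lemma: it is quoted from Fujita's \emph{Cancellation problem of complete varieties} (reference [4] of the paper, Proposition 5), so what you have written is a self-contained replacement for a citation rather than a variant of an argument appearing in the text. Your route --- use $f_{\ast}\mathcal{O}_{X\times\mathbb{P}^{n}}=\mathcal{O}_{Y}$ and the projection formula to identify $H^{0}(Y,H)$ with $H^{0}(X\times\mathbb{P}^{n},f^{\ast}H)$, decompose $f^{\ast}H=p^{\ast}A\otimes q^{\ast}\mathcal{O}_{\mathbb{P}^{n}}(d)$ via $\mathrm{Pic}(X\times\mathbb{P}^{n})=\mathrm{Pic}(X)\oplus\mathbb{Z}$, apply K\"unneth, and observe that the morphism of the complete linear system of a box product is the Segre composite of the two individual complete-linear-system morphisms --- is sound, and it has the merit of making visible exactly where the hypothesis enters (completeness of the linear system $|f^{\ast}H|$). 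Two small points deserve an explicit word: the identity $\phi_{|H|}\circ f=\mathrm{Seg}\circ(\phi_{|A|}\times v_{d})$ should be checked as an equality of morphisms, which follows because the two maps agree on closed points of an integral scheme into a separated target; and the two closed images coincide as subschemes because both are integral and they coincide as sets. The one inaccurate sentence is your parenthetical ``for instance'': for a finite $g\colon\mathbb{P}^{n}\to\mathbb{P}^{n}$ of degree $>1$, the map $\mathrm{id}_{X}\times g$ \emph{is} of the form $f_{1}\times f_{2}$, so it does not witness failure of the conclusion when the hypothesis is dropped; a correct witness is the degree-two quotient $\mathbb{P}^{1}\times\mathbb{P}^{1}\to\mathbb{P}^{2}$, whose target is not a nontrivial product and which cannot factor as required for dimension reasons. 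This slip is confined to a side remark and does not affect the proof itself, which stands.
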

\begin{proof}
See \ref{4}, Proposition 5.
\end{proof}
\begin{lemma}\label{lemma:lemma 3.6}
Let $X$ and $Y$ be smooth projective varieties such that $\mathbb{P}^{n}\times X$  
is isomorphic to  $\mathbb{P}^{n}\times Y$. Then $X$ is isomorphic to $Y$.
\end{lemma}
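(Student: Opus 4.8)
The plan is to convert the isomorphism into a morphism to $Y$, split $Y$ using the product–decomposition lemma quoted above from \ref{4}, and then read off the answer from the scheme-theoretic fibres. I would first dispose of $n=0$ (trivial) and then fix an isomorphism $\phi\colon\mathbb P^n\times X\to\mathbb P^n\times Y$ and set $g:=p_Y\circ\phi$, where $p_Y$ is the projection. Since $Y$ is a connected projective variety, $(p_Y)_\ast\mathcal O_{\mathbb P^n\times Y}=\mathcal O_Y$, hence $g_\ast\mathcal O_{\mathbb P^n\times X}=(p_Y)_\ast(\phi_\ast\mathcal O_{\mathbb P^n\times X})=(p_Y)_\ast\mathcal O_{\mathbb P^n\times Y}=\mathcal O_Y$, so the decomposition lemma applies: $Y=Z\times T$ with morphisms $g_1\colon X\to Z$, $g_2\colon\mathbb P^n\to T$ and $g=g_1\times g_2$, i.e. $g(p,x)=(g_1(x),g_2(p))$. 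Since $g$ is surjective (it is $p_Y$ composed with an isomorphism), both $g_1$ and $g_2$ are surjective, and $Z,T$ are smooth projective varieties.

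The key point I would exploit is that for each $y=(z,t)\in Y$ the scheme-theoretic fibre $g^{-1}(y)=\phi^{-1}(\mathbb P^n\times\{y\})$ is isomorphic to $\mathbb P^n$, whereas the product shape of $g$ gives $g^{-1}(y)=g_1^{-1}(z)\times_k g_2^{-1}(t)$. Hence $g_1^{-1}(z)\times_k g_2^{-1}(t)\cong\mathbb P^n$; in particular both factors are integral projective varieties (closed in $X$, resp.\ $\mathbb P^n$), and because $\mathbb P^n$ is not a product of two positive-dimensional varieties — otherwise pulling back ample classes from the two factors would force $\mathrm{rk}\,\mathrm{Pic}(\mathbb P^n)\ge 2$ — at least one of $g_1^{-1}(z)$, $g_2^{-1}(t)$ must be $\mathrm{Spec}\, k$, for every $(z,t)$.

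I would then split into two cases. If $\dim g_2^{-1}(t_0)\ge 1$ for some $t_0\in T$, then $g_1^{-1}(z)\cong\mathrm{Spec}\, k$ for all $z$, so $g_1$ is an isomorphism by the fibrewise criterion (a proper morphism all of whose scheme-theoretic fibres are $\mathrm{Spec}\, k$ is an isomorphism); then $\dim Z=\dim X=\dim Y=\dim Z+\dim T$ forces $T=\mathrm{Spec}\, k$, and $Y=Z\cong X$. Otherwise every $g_2^{-1}(t)$ is $\mathrm{Spec}\, k$, so $g_2\colon\mathbb P^n\to T$ is an isomorphism by the same criterion. In that case I would look at the two maps to $T$, namely $\sigma:=g_2\circ\mathrm{pr}_{\mathbb P^n}$ on the source and $\tau:=\mathrm{pr}_T\circ p_Y$ on the target; one checks $\tau\circ\phi=\sigma$ (both send $(p,x)\mapsto g_2(p)$), so $\phi$ is an isomorphism over $T$. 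Passing to the fibre over a point $t_\ast=g_2(p_0)\in T$ identifies $\{p_0\}\times X\cong X$ (the source fibre, as $g_2$ is injective) with $\mathbb P^n\times(Z\times\{t_\ast\})\cong\mathbb P^n\times Z$ (the target fibre), so $X\cong\mathbb P^n\times Z$; since $T\cong\mathbb P^n$, this gives $Y=Z\times T\cong\mathbb P^n\times Z\cong X$, as desired.

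The part I expect to need the most care is the second case: one must recognise that once $g_2$ has been shown to be an isomorphism, the identity $\tau\circ\phi=\sigma$ promotes $\phi$ to an isomorphism \emph{over} $T$, so that a single fibre recovers both $X$ and $\mathbb P^n\times Z$. Everything else is bookkeeping around the decomposition lemma and the fibrewise criterion; the two genuinely geometric inputs are that the fibres of $g$ are copies of $\mathbb P^n$ and that $\mathbb P^n$ admits no nontrivial product decomposition, which together drive the dichotomy. (In the first case one could instead invoke Lemmas 3.3--3.4 to conclude directly that $T$ is a point or $\cong\mathbb P^n$.)
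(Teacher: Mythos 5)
Your argument is correct, but it is worth pointing out that the paper does not prove this lemma at all: it simply cites Fujita's cancellation theorem ([4], Theorem 6). What you have written is a self-contained derivation of that theorem from ingredients already present in the paper, namely the product-decomposition lemma (Lemma 3.5, which is Fujita's Proposition 5), the fibrewise isomorphism criterion (Lemma 3.2), and the fact that $\mathbb{P}^n$ admits no nontrivial product decomposition (your Picard-rank argument, or alternatively Lemmas 3.3--3.4 applied to $g_2$). The dichotomy you set up is sound: for each $(z,t)$ the scheme-theoretic fibre $g_1^{-1}(z)\times_k g_2^{-1}(t)\cong\mathbb{P}^n$ forces one factor to be $\mathrm{Spec}\,k$ (both factors are integral because a product over an algebraically closed field is reduced, resp.\ irreducible, only if each factor is --- a standard fact you use implicitly and could state), and the two resulting cases are handled correctly: in the first, Lemma 3.2 makes $g_1$ an isomorphism and a dimension count kills $T$; in the second, the identity $\tau\circ\phi=\sigma$ does indeed promote $\phi$ to an isomorphism over $T$, so comparing fibres over a single point of $T\cong\mathbb{P}^n$ gives $X\cong\mathbb{P}^n\times Z\cong Y$. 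The trade-off is clear: the paper's citation is shorter and defers to the literature, while your proof makes the lemma logically dependent only on Lemma 3.5 (itself still a citation to [4]) and keeps the cancellation step transparent and verifiable within the paper's own toolkit.
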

\begin{proof}
See \ref{4}, Theorem 6.
\end{proof}
\begin{lemma}\label{lemma:lemma 3.7} Let $m,n,r$ and $s$ be positive integers. Then:
\begin{enumerate}
\item[(i)] Suppose $m\neq n$. Let $E$ and $F$ be vector bundles of rank $r+1$ and $s+1$ over $\mathbb{P}^{m}$ and $\mathbb{P}^{n}$ respectively. If \medspace $\mathbb{P}(E)\cong\mathbb{P}(F)$ then $r=n$, $s=m$, and $E$ and $F$ are trivial upto line bundle twists. Also, if $p:\mathbb{P}(E)\to \mathbb{P}^m $,  $p:\mathbb{P}(F)\to \mathbb{P}^n $ are the projections and $\phi:\mathbb{P}(E)\to\mathbb{P}(F)$ is an isomorphism, then the map $\Phi=(p, q\circ\phi): \mathbb{P}(E)\to \mathbb{P}^{m}\times\mathbb{P}^{n}$ is an isomorphism.

\item[(ii)] Let $ E, F $ be rank $ r+1$ vector bundles over $ \mathbb{P}^n $. If $\mathbb{P}(E) \cong \mathbb{P}(F)$ then there is an automorphism  $ \psi: \mathbb{P}^n \longrightarrow \mathbb{P}^n $ and $ L\in \mathrm{Pic}(\mathbb{P}^n)$ such that $ E\cong \psi^{\ast}F\bigotimes L$ as vector bundles over $\mathbb{P}^n $.
\end{enumerate}
\end{lemma}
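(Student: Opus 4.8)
The plan for (i) is to first extract the numerical data from Chow \emph{groups}, and then run a ring‑theoretic argument using the projective bundle presentation of the Chow ring. Assume, without loss of generality, $m<n$ (then all ranks are at least $2$). An isomorphism $\mathbb{P}(E)\cong\mathbb{P}(F)$ forces equal dimensions, $m+r=n+s$, and an isomorphism of integral Chow groups; by the projective bundle formula the ranks of these groups are $(m+1)(r+1)$ and $(n+1)(s+1)$, so $(m+1)(r+1)=(n+1)(s+1)$. Subtracting the dimension relation gives $mr=ns$, so $\{m,r\}$ and $\{n,s\}$ have the same sum and product and hence coincide as multisets; since $m\ne n$ this forces $r=n$ and $s=m$. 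From now on $E$ has rank $n+1$ over $\mathbb{P}^{m}$ and $F$ has rank $m+1$ over $\mathbb{P}^{n}$.

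Write $A^{\ast}(\mathbb{P}(E))=\mathbb{Z}[h,u]/(h^{m+1},f_E)$ and $A^{\ast}(\mathbb{P}(F))=\mathbb{Z}[h',u']/(h'^{\,n+1},f_F)$ as in Section~2, with $h,h'$ the hyperplane pullbacks, $u,u'$ the tautological classes, $\deg_{u}f_E=n+1$, $\deg_{u'}f_F=m+1$, and with the standard monomial bases. An isomorphism $\phi\colon\mathbb{P}(E)\to\mathbb{P}(F)$ induces a graded ring isomorphism $\phi^{\ast}$, carrying the $\mathbb{Z}$-basis $\{h',u'\}$ of $A^1(\mathbb{P}(F))$ to a $\mathbb{Z}$-basis of $A^1(\mathbb{P}(E))$; write $\phi^{\ast}h'=au+bh$ and $\phi^{\ast}u'=cu+dh$, so $\gcd(a,b)=1$ (the class $h'$ is primitive) and $bc-ad=\pm1$, whence $\gcd(a,c)=1$. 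Since $\phi^{\ast}h'=(q\circ\phi)^{\ast}\mathcal{O}(1)$ is globally generated, its restriction $\mathcal{O}(a)$ to a fibre of $p$ forces $a\ge0$; and $a=0$ would make $q\circ\phi$ factor through $p$ and produce a surjection $\mathbb{P}^{m}\twoheadrightarrow\mathbb{P}^{n}$, impossible by Lemma~3.3; so $a\ge1$. The crucial point is $a=1$: apply $(\phi^{-1})^{\ast}$ to the relation $h^{m+1}=0$ and expand $\bigl((\phi^{-1})^{\ast}h\bigr)^{m+1}=0$ in $A^{\ast}(\mathbb{P}(F))$, where $(\phi^{-1})^{\ast}h=Au'+Bh'$ with $|A|=a$ and $|B|=|c|$. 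Because $m+1\le n$, the only monomial in this expansion needing reduction is the one involving $u'^{\,m+1}$ (handled by $f_F$), and collecting the coefficient of $h'^{\,m+1}$ gives $B^{m+1}=\pm a^{m+1}b_{m+1}$ with $b_{m+1}=c_{m+1}(F)/h'^{\,m+1}\in\mathbb{Z}$. Hence $a^{m+1}\mid B^{m+1}$; since $\gcd(a,B)=\gcd(a,c)=1$ this forces $a^{m+1}=1$, i.e.\ $a=1$.

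With $a=1$ the set $\{h,\phi^{\ast}h'\}$ is a $\mathbb{Z}$-basis of $\mathrm{Pic}(\mathbb{P}(E))$. Put $\Phi=(p,q\circ\phi)\colon\mathbb{P}(E)\to\mathbb{P}^{m}\times\mathbb{P}^{n}$. On a fibre $p^{-1}(x)\cong\mathbb{P}^{n}$ the second component is a morphism $\mathbb{P}^{n}\to\mathbb{P}^{n}$ pulling $\mathcal{O}(1)$ back to $\mathcal{O}(a)=\mathcal{O}(1)$, hence a finite degree‑one surjection onto $\mathbb{P}^{n}$, hence an isomorphism; therefore every scheme‑theoretic fibre of $\Phi$ is a reduced point, and $\Phi$ is an isomorphism by Lemma~3.2. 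Since $\mathrm{pr}_1\circ\Phi=p$, this is an isomorphism over $\mathbb{P}^{m}$ onto $\mathbb{P}^{m}\times\mathbb{P}^{n}=\mathbb{P}_{\mathbb{P}^{m}}(\mathcal{O}^{\oplus(n+1)})$, and two bundles of the same rank with isomorphic projectivisations over a common base differ by a line‑bundle twist, so $E$ is trivial up to a twist. Composing $\phi$ with $\Phi^{-1}$ exhibits $\mathbb{P}^{m}\times\mathbb{P}^{n}$ as $\mathbb{P}(F)$ over $\mathbb{P}^{n}$ via $\mathrm{pr}_2$, so $F$ is trivial up to a twist as well; this also proves the final assertion of (i).

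For (ii), set $g=q\circ\phi\colon\mathbb{P}(E)\to\mathbb{P}^{n}$. If $g$ contracts every fibre of $p$, then $g=\bar g\circ p$; the fibres of $g$ are connected and, through $\phi$, isomorphic to those of $q$, so the scheme‑theoretic fibres of $\bar g\colon\mathbb{P}^{n}\to\mathbb{P}^{n}$ are reduced points, and $\bar g$ is an automorphism $\psi^{-1}$ by Lemma~3.2. Then $q\circ\phi=\psi^{-1}\circ p$, so $\phi$ is an isomorphism over $\psi$, and $E\cong\psi^{\ast}F\otimes L$ for a line bundle $L$. The remaining case is that $g$ does not contract the fibres of $p$; the Chow‑ring computation above (now with both bases $\mathbb{P}^{n}$) gives $\phi^{\ast}h'=au+bh$ with $a\ge1$, from which one argues that $\phi^{\ast}h'$ is the primitive generator of the \emph{other} extremal ray of the nef cone of $\mathbb{P}(E)$ (it is nef, satisfies $(\phi^{\ast}h')^{\dim}=0$, and cannot lie on the $h$–ray without contradicting the case assumption); thus $\phi$ interchanges the two extremal contractions of $\mathbb{P}(E)$ and of $\mathbb{P}(F)$, the other contraction of $\mathbb{P}(E)$ is again a $\mathbb{P}^{r}$‑bundle whose base is isomorphic to $\mathbb{P}^{n}$, and substituting it for $p$ reduces to the contracting case. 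I expect the main obstacle to be exactly this reduction in (ii)—showing that a $\mathbb{P}^{r}$‑bundle over $\mathbb{P}^{n}$ admitting a second extremal $\mathbb{P}^{r}$‑bundle structure (necessarily over a variety isomorphic to $\mathbb{P}^{n}$) must be $\mathbb{P}^{n}\times\mathbb{P}^{n}$, so that $r=n$ and all the bundles are trivial up to a twist—together with making the coefficient bookkeeping that yields $a=1$ in (i) fully precise.
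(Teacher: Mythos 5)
Your part (i) is correct and is in fact more self-contained than the paper's: the paper simply invokes Theorem A of Sato's paper \ref{9} for the statement that $\Phi$ is an isomorphism, whereas you rederive it from the Chow-ring presentation (the coefficient extraction giving $B^{m+1}=\pm a^{m+1}b_{m+1}$ and hence $a=1$ via $\gcd(a,c)=1$ is sound, and is exactly the mechanism the authors point to in the Remark after Theorem 4.2 as "essentially the idea of the proof in \ref{9}"). The preliminary step $r=n$, $s=m$ via ranks of Chow groups also works.

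Part (ii) has a genuine gap in the non-contracting case. Your plan is to show that a $\mathbb{P}^{r}$-bundle over $\mathbb{P}^{n}$ carrying a second, genuinely different extremal $\mathbb{P}^{r}$-bundle structure over $\mathbb{P}^{n}$ must be $\mathbb{P}^{n}\times\mathbb{P}^{n}$, forcing all bundles to be trivial up to a twist. That claim is false: the point--hyperplane flag variety $\mathbb{P}(T_{\mathbb{P}^{n}})$ admits two different $\mathbb{P}^{n-1}$-bundle structures over $\mathbb{P}^{n}$ and is not a product (its dimension is $2n-1$, not $2n$). The correct dichotomy — in the "different structures" case either both bundles are trivial up to line-bundle twists or both are twists of the tangent bundle of $\mathbb{P}^{n}$ — is precisely Sato's Theorem A, which the paper cites rather than proves; it is a substantive classification result and not something your nef-cone observation delivers. (The lemma's conclusion does still hold in the tangent-bundle case, since $T_{\mathbb{P}^n}\otimes L\cong\psi^{\ast}(T_{\mathbb{P}^n}\otimes L')$ trivially, but your route cannot reach it.) Separately, the proposed "substitute the other contraction for $p$" reduction is circular: using $q\circ\phi$ as the new projection identifies $\mathbb{P}(E)$ with $\mathbb{P}(E'')$ for some \emph{other} bundle $E''$ on $\mathbb{P}^{n}$, and the contracting-case argument then compares $E''$ with $F$, not $E$ with $F$; without a further argument relating $E$ and $E''$ you get no conclusion about $E$. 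To close part (ii) you either need to import Sato's Theorem A (and Lemma 1.5 of \ref{9}) as the paper does, or supply an independent proof of that classification.
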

\begin{proof}
The last statement of $(i)$ follows from Theorem A of \ref{9}. One can see that $\mathbb{P}_{\mathbb{P}^m}(E)\xlongrightarrow{\Phi}\mathbb{P}_{\mathbb{P}^m}(\mathcal{O}_{\mathbb{P}^n}^{n+1})$ is an isomorphism of PGL$_{n+1} -$ bundles, we get $E$ is trivial with line bundle twist. Similarly, $F$ is trivial with line bundle twist.

For $(ii)$, note that given an isomorphism of $\mathbb{P}(E)$ and $\mathbb{P}(F)$, $\mathbb{P}(E)$ gets two projective bundle structures: $p$ and $q\circ\phi$ in the notation of $(i)$. If they are not different projective bundle structures in the notion (P) of \ref{9}, (1) implies (3) part in lemma 1.5 of \ref{9} (we do not need any assumption on the characteristic of $k$ as our projective bundles are Zarisky locally trivial) gives the conclusion in $(ii)$. If the projective bundle structures are different in notion (P) of \ref{9}, then Theorem A of \ref{9} and a similar argument as in $(i)$ shows either both $E$ and $F$ are trivial up to line bundle twists, or both $E$ and $F$ are tangent bundles of $\mathbb{P}^n $ upto line bundle twists. In both cases, the conclusion of $(ii)$ clearly holds.
\end{proof}
We shall also need a relative version of Lemma $3.7.(i)$.
\begin{lemma}\label{lemma:lemma 3.8}
Let $X$ be a variety, $E_{1}$ and $F_{1}$ be vector bundles over $X$ of rank $m+1$ and $n+1$ such that $m\neq n$, $\mathbb{P}(E_{1})\xlongrightarrow{p}X$ and  $\mathbb{P}(F_{1})\xlongrightarrow{q}X$ be the projectivation of $E_{1}$ and $F_{1}$ respectively. Let $E_{2}$ and $F_{2}$ be vector bundles of rank  $r+1$ and $s+1$ on $\mathbb{P}(E_{1})$ and $\mathbb{P}(F_{1})$ repectively. If $\mathbb{P}_{\mathbb{P}(E_{1})}(E_{2})\cong\mathbb{P}_{\mathbb{P}(F_{1})}(F_{2})$ over $X$, then $r=n$, $s=m$ and there exist $L\in\mathrm{Pic}(\mathbb{P}(E_{1}))$ and $M\in\mathrm{Pic}(\mathbb{P}(F_{1}))$ such that $E_{2} = p^{\ast}F_{1}\otimes L$ and $F_{2} = q^{\ast}E_{1}\otimes M$.
\end{lemma}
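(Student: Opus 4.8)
The plan is to reduce the statement to Lemma~\ref{lemma:lemma 3.7}(i) by restricting to the fibres over the closed points of $X$, and then to reassemble the fibrewise isomorphisms into a global one by means of Lemma~3.2. Write $W:=\mathbb{P}_{\mathbb{P}(E_1)}(E_2)$, let $\pi_1\colon W\to\mathbb{P}(E_1)$ be its structure map, and let $\phi\colon W\to\mathbb{P}_{\mathbb{P}(F_1)}(F_2)$ be the given isomorphism; composing $\phi$ with the structure map of $\mathbb{P}_{\mathbb{P}(F_1)}(F_2)$ gives a second morphism $\pi_2\colon W\to\mathbb{P}(F_1)$. Since $\phi$ is over $X$, both $\pi_1$ and $\pi_2$ lie over the projection $\rho\colon W\to X$, so together they define a morphism $(\pi_1,\pi_2)\colon W\to\mathbb{P}(E_1)\times_X\mathbb{P}(F_1)$ over $X$; the heart of the argument is to prove that this morphism is an isomorphism.

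For the fibrewise analysis, fix a closed point $x\in X$. Since $k$ is algebraically closed, $p^{-1}(x)\cong\mathbb{P}^m$ and $q^{-1}(x)\cong\mathbb{P}^n$, and restricting over $x$ turns $\phi$ into an isomorphism $\mathbb{P}_{\mathbb{P}^m}\big(E_2|_{p^{-1}(x)}\big)\cong\mathbb{P}_{\mathbb{P}^n}\big(F_2|_{q^{-1}(x)}\big)$ of the fibre $W_x$ with $\big(\mathbb{P}_{\mathbb{P}(F_1)}(F_2)\big)_x$, the two bundles having ranks $r+1$ and $s+1$. As $m\neq n$, Lemma~\ref{lemma:lemma 3.7}(i) applies and yields $r=n$ and $s=m$; moreover its final assertion says precisely that the induced map $(\pi_1,\pi_2)|_{W_x}\colon W_x\to\mathbb{P}^m\times\mathbb{P}^n=\big(\mathbb{P}(E_1)\times_X\mathbb{P}(F_1)\big)_x$ is an isomorphism.

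To globalise, note first that $(\pi_1,\pi_2)$ is proper, since it factors the proper morphism $\rho$ through the separated morphism $\mathbb{P}(E_1)\times_X\mathbb{P}(F_1)\to X$. For a closed point $z$ of $\mathbb{P}(E_1)\times_X\mathbb{P}(F_1)$, lying over a closed point $x$ of $X$, the scheme-theoretic fibre $(\pi_1,\pi_2)^{-1}(z)$ coincides with the fibre over $z$ of the isomorphism $(\pi_1,\pi_2)|_{W_x}$ from the previous step, hence is isomorphic to $\mathrm{Spec}\,k$. By Lemma~3.2, $(\pi_1,\pi_2)$ is an isomorphism. Under it $\pi_1$ becomes the first projection, and by the base-change property of projective bundles $\mathbb{P}(E_1)\times_X\mathbb{P}(F_1)=\mathbb{P}_{\mathbb{P}(E_1)}(p^{\ast}F_1)$ over $\mathbb{P}(E_1)$; so $\mathbb{P}_{\mathbb{P}(E_1)}(E_2)\cong\mathbb{P}_{\mathbb{P}(E_1)}(p^{\ast}F_1)$ over $\mathbb{P}(E_1)$, and since these bundles both have rank $n+1$, the fact recalled in Section~2, item 3(b) (valid over an arbitrary base) gives $E_2\cong p^{\ast}F_1\otimes L$ for some $L\in\mathrm{Pic}(\mathbb{P}(E_1))$. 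Reading the same isomorphism through $\pi_2$, and using $\mathbb{P}(E_1)\times_X\mathbb{P}(F_1)=\mathbb{P}_{\mathbb{P}(F_1)}(q^{\ast}E_1)$ over $\mathbb{P}(F_1)$, gives symmetrically $F_2\cong q^{\ast}E_1\otimes M$ for some $M\in\mathrm{Pic}(\mathbb{P}(F_1))$.

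The only delicate point is the passage from a fibrewise isomorphism to a global one, and it is exactly what Lemma~3.2 is designed for: a proper morphism whose scheme-theoretic fibre over every closed point is $\mathrm{Spec}\,k$ is an isomorphism. What makes these fibres reduced points, and not merely single set-theoretic points, is that the last clause of Lemma~\ref{lemma:lemma 3.7}(i) provides a genuine isomorphism $W_x\to\mathbb{P}^m\times\mathbb{P}^n$ rather than just a bijective morphism; in particular no flatness of $(\pi_1,\pi_2)$ has to be established separately. The remaining steps are routine: the bookkeeping that identifies $(\pi_1,\pi_2)^{-1}(z)$ with a fibre of $(\pi_1,\pi_2)|_{W_x}$, and the two base-change identifications of $\mathbb{P}(E_1)\times_X\mathbb{P}(F_1)$ as a projective bundle over $\mathbb{P}(E_1)$ and over $\mathbb{P}(F_1)$.
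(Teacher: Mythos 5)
Your proposal is correct and follows essentially the same route as the paper: apply Lemma~\ref{lemma:lemma 3.7}(i) fibrewise over closed points of $X$ to get $r=n$, $s=m$ and fibrewise isomorphisms, use Lemma~3.2 to conclude that the induced map to $\mathbb{P}(E_1)\times_X\mathbb{P}(F_1)$ is a global isomorphism, and then read off the line-bundle twists from the two projective-bundle structures on the fibre product. Your added remarks on properness and on why the scheme-theoretic fibres are reduced points are correct and slightly more explicit than the paper's write-up.
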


\begin{proof}

\noindent
($\Rightarrow$) Consider the following diagram,\vspace{10mm}

\hspace{35mm}
\xymatrix{\mathbb{P}_{\mathbb{P}(E_{1})}(E_{2}) \ar[d]  & \cong & {\mathbb{P}_{\mathbb{P}(F_{1})}(F_{2})} \ar[d] \\  \mathbb{P}(E_{1})  \ar[rd]_p   &  &  \mathbb{P}(F_{1})\ar[ld]^q \\ & X}
\vspace{10mm}
\\
It is clear that the situation over each point $x\in X$ is exactly the situation in Lemma 3.7(i). So, $ r = n $ and $s = m$. 
Let $Z =  \mathbb{P}_{{\mathbb{P}(E_{1})}}(E_{2})$. The above diagram gives the map $Z\xlongrightarrow{\Phi}\mathbb{P}(E_{1})\times_{X}\mathbb{P}{(F_{1})}$ and for each point $x\in X$, by Lemma 3.7(i) the map $\Phi_{x}: Z_{x}\longrightarrow(\mathbb{P}(E_{1})\times_{X}\mathbb{P}(F_{1}))_{x}$ is an isomorphism. So, all scheme-theoretic fibres of $\Phi$ are Spec $k$. By Lemma 3.2, $\Phi$ is an isomorphism. We have the following commutative diagram,

\begin{center}
\begin{tikzcd}
\mathbb{P}_{\mathbb{P}(E_{1})}(E_{2})\cong{Z} \arrow[r, "\Phi" ]  \arrow[d]
& \mathbb{P}(E_{1})\times_{X}\mathbb{P}(F_{1})\cong\mathbb{P}_{\mathbb{P}(E_{1})}(p^{\ast}F_{1}) \arrow[ld] \\
\mathbb{P}(E_{1})
\end{tikzcd}
\end{center}
\vspace{2mm}
So, $\Phi$ is an isomorphism of PGL$_{n+1} -$bundles over $\mathbb{P}(E_{1})$.
This shows $E_{2} = p^{\ast}F_{1}\otimes L$ for some $L\in$ Pic$(\mathbb{P}(E_{1}))$. Similarly, $F_{2} = q^{\ast}E_{1}\otimes M$ for some $M\in$ Pic$(\mathbb{P}(F_{1}))$. 
\end{proof}
Our first result of this section is the following,
\begin{theorem}\label{theorem:theorem 3.9}
Let $X $ and $ Y$ be smooth projective varieties of dimension $m$ and $n$ respectively. Let $E$ and $ F$ be vector bundles of rank $r+1$ and $s+1$ over $X$ and $Y$  respectively with $ r+m = s+n \geq m+n $. Suppose $ \mathbb{P}(E)\cong \mathbb{P}(F)$. Then one of the following holds:
\begin{enumerate}
\item[(i)] $m = n$ and there is an isomorphism $X\xlongrightarrow{\psi} Y $ such that $E\cong\psi^{\ast}F\otimes L$ for some line bundle L on $X$.
\item[(ii)] $X\cong \mathbb{P}^m, Y \cong \mathbb{P}^n$, $r=n, s=m $ and $E, F$ are trivial bundles with line bundle twists.
    
\end{enumerate}
\end{theorem}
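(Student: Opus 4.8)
\noindent\emph{Proof proposal.} The plan is to play off the two projective bundle structures on the common total space $W:=\mathbb{P}(E)\cong\mathbb{P}(F)$, with bundle projections $p\colon W\to X$ (fibres $\mathbb{P}^{r}$) and $q\colon W\to Y$ (fibres $\mathbb{P}^{s}$). By the symmetry of the statement we may assume $m\le n$; then $r+m=s+n\ge m+n$ forces $r\ge n\ge m$ and $s\ge m$. The basic observation is: for a fibre $P:=p^{-1}(x)\cong\mathbb{P}^{r}$, the restriction $q|_{P}\colon\mathbb{P}^{r}\to Y$ pulls a very ample bundle $H$ on $Y$ back to a globally generated line bundle $\mathcal{O}_{\mathbb{P}^{r}}(d)$ with $d\ge 0$. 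If $d=0$ then, composing with the embedding $Y\hookrightarrow\mathbb{P}^{N}$ defined by $H$, the map $q|_{P}$ is given by sections of the trivial bundle and hence is constant; if $d>0$ then $q|_{P}^{\ast}H$ is ample, so $q|_{P}$ is finite onto its image, whence $r=\dim q(P)\le\dim Y=n$, and with $r\ge n$ this forces $r=n$, $q(P)=Y$, and $Y\cong\mathbb{P}^{n}$ by Lemma~3.4. The mirror statement holds for $p$ restricted to a fibre $Q:=q^{-1}(y)\cong\mathbb{P}^{s}$: either $p|_{Q}$ is constant, or $s=m$ and $X\cong\mathbb{P}^{m}$.

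\noindent\emph{Case A: $q$ is constant on every fibre of $p$.} Over a Zariski trivialisation $p^{-1}(U)\cong U\times\mathbb{P}^{r}$ the morphism $q$ is constant on each $\{u\}\times\mathbb{P}^{r}$ and hence factors through the projection to $U$; these local factorisations glue to a morphism $h\colon X\to Y$ with $q=h\circ p$. Since $q$ is surjective so is $h$, whence $m=\dim X\ge\dim Y=n$; as $m\le n$, we get $m=n$ and $r=s$. For each $y\in Y$ one has the scheme-theoretic equality $p^{-1}(h^{-1}(y))=q^{-1}(y)\cong\mathbb{P}^{s}=\mathbb{P}^{r}$; since $p^{-1}(h^{-1}(y))\to h^{-1}(y)$ is a $\mathbb{P}^{r}$-bundle whose total space is reduced, connected and $r$-dimensional, $h^{-1}(y)$ is reduced (faithfully flat descent), connected and $0$-dimensional, hence $\cong\mathrm{Spec}\,k$; Lemma~3.2 then shows $h$ is an isomorphism. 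Finally, $q=h\circ p$ identifies $W$ with $\mathbb{P}_{X}(h^{\ast}F)$ over $X$, so (an isomorphism of projective bundles of equal rank over a common base being a line bundle twist, as recalled in Section~2) $E\cong h^{\ast}F\otimes L$ for some $L\in\mathrm{Pic}(X)$; with $\psi=h$ this is conclusion~(i).

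\noindent\emph{Case B: $q$ is nonconstant on some fibre of $p$.} By the observation above, $r=n$ and $Y\cong\mathbb{P}^{n}$. Now run the mirror dichotomy for $p$ restricted to fibres of $q$. If $p$ is nonconstant on some fibre of $q$, then $s=m$ and $X\cong\mathbb{P}^{m}$; combined with $Y\cong\mathbb{P}^{n}$, $r=n$, $s=m$, Lemma~3.7(i) gives, when $m\neq n$, that $E$ and $F$ are trivial up to line bundle twists, i.e.\ conclusion~(ii); when $m=n$ (so $r=s=n$), $E$ and $F$ are rank-$(n+1)$ bundles on $\mathbb{P}^{n}$ and Lemma~3.7(ii) supplies $\psi\in\mathrm{Aut}(\mathbb{P}^{n})$ and $L$ with $E\cong\psi^{\ast}F\otimes L$, i.e.\ conclusion~(i). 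If instead $p$ is constant on every fibre of $q$, then, exactly as in Case~A, $p=g\circ q$ for a surjective $g\colon Y=\mathbb{P}^{n}\to X$; since $X$ is not a point ($m\ge 1$), Lemma~3.3 gives $\dim X=n$, so $m=n$ and hence $r=s$, and Lemma~3.4 gives $X\cong\mathbb{P}^{n}$; then $\mathbb{P}_{\mathbb{P}^{m}}(E)\cong\mathbb{P}_{\mathbb{P}^{n}}(F)$ with equal ranks over $\mathbb{P}^{n}$, and Lemma~3.7(ii) yields conclusion~(i). This covers all cases.

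\noindent The routine core — that $q|_{P}$ is either constant or realises $Y$ as $\mathbb{P}^{n}$ — is immediate from Lemmas~3.3 and 3.4. The step needing the most care is the last part of Case~A: the morphism $h\colon X\to Y$ comes cheaply by gluing local factorisations, but one must check that its scheme-theoretic fibres are single reduced points before invoking Lemma~3.2, and then read off the bundle relation. A secondary point is that Lemma~3.7(i) is vacuous when $m=n$, so each $m=n$ branch has to be closed off separately with Lemma~3.7(ii); keeping the reduction $m\le n$ (hence $r\ge n\ge m$ and $s\ge m$) in force throughout is what makes the dimension counts work.
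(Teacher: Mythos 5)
Your proposal is correct and follows essentially the same route as the paper: the same dichotomy on whether $q$ is constant on the fibres of $p$, the same use of Lemmas 3.2--3.4 to identify $X$ and $Y$ with projective spaces in the nonconstant case, and the same appeal to Lemma 3.7 to finish. The only differences are cosmetic --- a direct case analysis under the normalisation $m\le n$ instead of the paper's argument by contradiction, and a dimension/flat-descent argument rather than a Picard-group comparison to see that the fibres of $h$ are reduced points.
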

\begin{proof}
As $ r+m = s+n \geq m+n $ we have $r\geq n$  and $s\geq m$. Let $ \mathbb{P}(E) \xlongrightarrow{p} X ,$   $ \mathbb{P}(F) \xlongrightarrow{q} Y $ be the  projectivization of $E,F$  and  $ \phi: \mathbb{P}(E) \longrightarrow \mathbb{P}(F)$ be an isomorphism. Suppose $(i)$ does not hold. We shall show that (ii) holds. Let $D_{z}= p^{-1}(z)$ be the fibre over $z\in X$.

Suppose $(q\circ\phi)|_{D_{z}}$ is constant, say $\psi(z)$, for all $z\in X$. Using local sections of the morphism $p$, one easily sees that $\psi$ is a morphism of varieties. For $y\in Y$, we have $p^{-1}\psi^{-1} (y)\cong q^{-1}(y)\cong \mathbb{P}^s$ via $\phi$. So $\psi^{-1} (y)$ is a smooth projective variety. Since $p^{-1}\psi^{-1} (y)$ is a $\mathbb{P}^r$-bundle over $\psi^{-1} (y)$, a comparison of Picard groups shows $\psi^{-1} (y)\cong Spec(k)$, for all $y\in Y$. So by Lemma 3.2, $X\xlongrightarrow {\psi}Y$ is an isomorphism and the following diagram commutes,
\begin{center}
\begin{tikzcd}
\mathbb{P}(E) \arrow[r, "\phi"   ] \arrow[d, "p"]
&  \mathbb{P}(F)\arrow[d, "q", swap ] \\
X\arrow[r, "\psi"]
& |[, rotate=0]|  Y
\end{tikzcd}
\end{center}
Hence, $\mathbb{P}(E)\cong\mathbb{P}(F)\times_YX = \mathbb{P}(\psi^{\ast}F)$ over $X$, so $E\cong\psi^{\ast}F\otimes L$ for some line bundle $L$ on $X$. So, $(i)$ holds a contradiction.

So, $ q \circ  \phi \mid_{D_{z}}: D_{z} \longrightarrow Y  $ is  nonconstant for some $z\in X$.  Using Lemma 3.3 and the fact that $ D_{z} \cong \mathbb{P}^r $  we get dim($ q \circ \phi (D_{z})$) $=r$ hence, $n \geq r$. This, together with our assumption $r\geq n $ gives $ r=n$ and $s=m $. Also, $ q \circ \phi : D_{z} \longrightarrow Y $  is surjective, so by Lemma 3.4 we get $ Y \cong \mathbb{P}^{n}$ and similarly, $X\cong \mathbb{P}^m$. Then using Lemma 3.7, we see that $(i)$ or $(ii)$ holds. Since $(i)$ does not hold so $(ii)$ holds. 
\\ 
\end{proof}

The above Theorem is not true for $ r+m = s+n \less m+n $, here is a class of counterexamples.
\\
\begin{example}
 Let $Z $ be any smooth projective variety that is not a point and let $ E_{1}$ and $F_{1}$ be vector bundles of different ranks ($\geq2$) on $Z$. Let $ X=\mathbb{P}(E_{1}),$ $ Y= \mathbb{P}(F_{1})$. Let $\pi_{1}:X\to  Z $ and $\pi_2:Y\to Z $ be the natural projections.  Let $E= \pi_{1}^{\ast} F_{1}$ and $ F = \pi_{2}^{\ast} E_{1} $, then  we have $ \mathbb{P}_X(E) \cong \mathbb{P}_Y(F) \cong  \mathbb{P}_Z(E_{1}) \times_{Z} \mathbb{P}_Z(F_{1}) $. But clearly (i) in Theorem 3.9 cannot hold as dim $X=$ dim $Z$ $+$ rk $E_1-1\neq$ dim $Z $ $+$ rk $F_1-1=$ dim $Y$, and (ii) in Theorem 3.9 also cannot hold as $Z$ is not a point.
\end{example}

In the above class counterexamples, we see that $X$ and $Y$ are projective bundles over a common base. This suggests the question: If $m\neq n$, $r+n = s+n < m+n,$ and $\mathbb{P}_{X}(E)\cong\mathbb{P}_{Y}(F)$ (with the same notation as in Theorem 3.9), are $X$ and $Y$ projective bundles over a common base?

 The answer is no in general as the following example shows. 
\begin{example}[\ref{6}, Chapter 1, Section 3.1]
Let $X = \mathrm{Gr(2, n+1)}$, the grassmannian of lines in $\mathbb{P}^{n}$ and $Y = \mathbb{P}^{n}$. Let $\mathbb{F}_{n} = \{(x, \ell)\hspace{2mm}| \hspace {2mm} \ell\hspace{2mm} is\hspace{2mm} a\hspace{2mm} line\hspace{2mm} in \hspace{2mm}\mathbb{P}^{n},\hspace{1mm} x\in\ell \}$ be a flag variety. Let $V$ be the tautological 2-bundle over $X$, i.e.,
$V = \{(\ell, v)\in X\times k^{n+1}\hspace{2mm}|\hspace{2mm}v\in E_{l}\}$, where $E_{l}$ is the 2-dimensional subspace of $k^{n+1}$ corresponding to the line $l$ in $\mathbb{P}^n$. The natural projection $\mathbb{F}_{n}\xlongrightarrow{p}X$ and $\mathbb{F}_{n}\xlongrightarrow{q}Y$ identifies $\mathbb{F}_{n} \cong \mathbb{P}_{X}(V^{\ast})$ and $\mathbb{F}_{n}\cong\mathbb{P}_{\mathbb{P}^{n}}(\Omega_{\mathbb{P}^{n}}) = \mathbb{P}_{\mathbb{P}^{n}}(T_{\mathbb{P}^{n}}^{\ast})$.  

So, $\mathbb{P}_{Y}(\Omega_{\mathbb{P}^{n}})\cong\mathbb{P}_{X}(V^\ast)$. But $X$ and $Y$ are not projective bundles over a common base, this can be seen for example by using $\mathrm{Pic(Gr(2, n+1))} \medspace=\medspace \mathbb{Z}$.
\end{example}\vspace{2mm}

[\ref{10}, Theorem 2] gives an affirmative answer to our question, when $r+m=s+n=m+n-1$.
Now we show that even after relaxing the conditions on $r,m,s$ and $n$ one still has an affirmative answer if either $E$ or $F$ is trivial. The following is the main result of this section.
\begin{theorem}\label{theorem:theorem 3.12}
Let $X $ and $ Y$ be smooth projective varieties of dimension $m$ and $n$ respectively. Let $E$ and $ F$ be vector bundles of rank $r+1$ and $s+1$ over $X$ and $Y$ respectively. Suppose   
$ \mathbb{P}_X(E)\cong \mathbb{P}_Y(F) $ and $ E$ is trivial. Then one of the following holds:
\begin{enumerate}
\item[(i)] $X\cong Y$ and F is trivial up to line bundle twist.
\item[(ii)] There exists a smooth projective variety $Z$ and two bundles $ E_{1}$ and $ F_{1} $ on $Z $ of ranks $s+1, r+1$ respectively, such that $F_{1}$ is trivial and 
$ X\cong \mathbb{P}_Z(E_{1})  $ and $Y = \mathbb{P}_Z(F_{1}) =Z\times \mathbb{P}^r  $.
\end{enumerate}
\end{theorem}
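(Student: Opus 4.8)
The plan is to exploit that triviality of $E$ turns $\mathbb{P}_X(E)$ into a product and then apply the product‑splitting lemma (Lemma 3.5) to the induced map down to $Y$. Since $E$ is trivial, $\mathbb{P}_X(E)=X\times\mathbb{P}^r$, so the hypothesis gives an isomorphism $\phi:X\times\mathbb{P}^r\xrightarrow{\sim}\mathbb{P}_Y(F)$. First I would dispose of the degenerate case $r=0$: there $\mathbb{P}_X(E)=X$, so $X\cong\mathbb{P}_Y(F)$ and (ii) holds trivially with $Z=Y$, $E_1=F$, $F_1=\mathcal{O}_Y$. So assume $r\geq 1$. Let $q:\mathbb{P}_Y(F)\to Y$ be the structure map and put $g:=q\circ\phi$; then $g_{\ast}\mathcal{O}_{X\times\mathbb{P}^r}=\mathcal{O}_Y$ (apply convention 3(a) to $q$ and use that $\phi$ is an isomorphism), and in particular $g$ is surjective. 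Feeding $g$ into Lemma 3.5 produces smooth projective varieties $Z,T$ (factors of $Y$) with $Y=Z\times T$, together with morphisms $f_1:X\to Z$ and $f_2:\mathbb{P}^r\to T$ such that $g=f_1\times f_2$; surjectivity of $g$ forces surjectivity of $f_1$ and $f_2$. The argument then splits according to whether $T$ is a point.

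Suppose first that $T$ is a point. Then $Y=Z$ and $g=f_1\circ\mathrm{pr}_X$; write $\psi:=f_1:X\to Y$. For each closed point $y\in Y$ the fibre $g^{-1}(y)=\psi^{-1}(y)\times\mathbb{P}^r$ is carried isomorphically by $\phi$ onto $q^{-1}(y)\cong\mathbb{P}^s$, so $\psi^{-1}(y)$ is a factor of $\mathbb{P}^s$, hence a smooth projective variety. A comparison of Picard groups — $\mathrm{Pic}(\psi^{-1}(y))\oplus\mathbb{Z}$ embeds into $\mathrm{Pic}(\psi^{-1}(y)\times\mathbb{P}^r)=\mathrm{Pic}(\mathbb{P}^s)=\mathbb{Z}$, while a positive‑dimensional projective variety carries a non‑torsion (e.g.\ ample) class — forces $\dim\psi^{-1}(y)=0$, hence $\psi^{-1}(y)=\mathrm{Spec}\,k$ and incidentally $r=s$. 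By Lemma 3.2, $\psi$ is an isomorphism. The square built from $\phi$, $\mathrm{pr}_X$, $q$, $\psi$ commutes, so $\phi$ is an isomorphism $\mathbb{P}_X(E)\cong\mathbb{P}_X(\psi^{\ast}F)$ over $X$; by convention 3(b), $E\cong\psi^{\ast}F\otimes L$ for some $L\in\mathrm{Pic}(X)$, so $\psi^{\ast}F$ is trivial up to a line bundle twist, and hence so is $F$ by Lemma 3.1. This is conclusion (i).

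Suppose now that $T$ is not a point. Then $f_2:\mathbb{P}^r\to T$ is a non‑constant surjection, so $\dim T=r$ by Lemma 3.3 and $T\cong\mathbb{P}^r$ by Lemma 3.4; hence $Y\cong Z\times\mathbb{P}^r=\mathbb{P}_Z(\mathcal{O}_Z^{\oplus(r+1)})$ and $\dim Z=n-r$. Comparing dimensions in $X\times\mathbb{P}^r\cong\mathbb{P}_Y(F)$ gives $m+r=n+s$, hence $\dim Z+s=m$. Fix a closed point $t_0\in\mathbb{P}^r$ and put $t'=f_2(t_0)\in T$. For every point $(x,t_0)$ we have $q(\phi(x,t_0))=g(x,t_0)=(f_1(x),t')\in Z\times\{t'\}$, so $\phi(X\times\{t_0\})$ is contained in $q^{-1}(Z\times\{t'\})=\mathbb{P}_{Z\times\{t'\}}(F|_{Z\times\{t'\}})$, which is an integral $\mathbb{P}^s$‑bundle over $Z\times\{t'\}\cong Z$ of dimension $\dim Z+s=m$. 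Since $\phi(X\times\{t_0\})\cong X$ is an integral closed subvariety of dimension $m$, it is all of $q^{-1}(Z\times\{t'\})$, so $\phi$ restricts to an isomorphism $X\xrightarrow{\sim}\mathbb{P}_Z(E_1)$, where $E_1:=F|_{Z\times\{t'\}}$ (viewed on $Z$ via $Z\cong Z\times\{t'\}$) has rank $s+1$. Together with $Y=\mathbb{P}_Z(\mathcal{O}_Z^{\oplus(r+1)})=Z\times\mathbb{P}^r$, this is conclusion (ii).

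Most of the work here is formal: the reduction to Lemma 3.5, the dimension bookkeeping, and the routine checks that the various factors of $\mathbb{P}^s$ and the images of $\phi$ above carry their expected (reduced) scheme structures, so that the purely set‑theoretic containments become honest isomorphisms. The one step that requires genuine care is the Picard‑group comparison in the first case, since it is precisely what prevents $\psi$ from having positive‑dimensional fibres; it is also the reason the case $r=0$ — where $\mathbb{P}^r$ degenerates to a point and the comparison is vacuous — must be disposed of at the outset.
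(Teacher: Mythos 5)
Your proof is correct and follows essentially the same route as the paper's: write $\mathbb{P}_X(E)=X\times\mathbb{P}^r$, apply Lemma 3.5 to $q\circ\phi$ to split $Y=Z\times T$, and treat the cases $T=\mathrm{pt}$ (Picard comparison, Lemma 3.2, convention 3(b)) and $T\cong\mathbb{P}^r$ exactly as the paper does. The only differences are cosmetic: you dispose of $r=0$ explicitly (which the paper leaves implicit), and in the second case you identify $X$ with the preimage $q^{-1}(Z\times\{t'\})$ by a direct dimension count rather than first proving that $f_2$ is an isomorphism and then forming the fibre square.
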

\begin{proof}Let $\phi: \mathbb{P}_{X}(E) \longrightarrow \mathbb{P}_{Y}(F)$ be an isomorphism, and on comparing dimensions of $\mathbb{P}_X(E)$ and $\mathbb{P}_Y(F)$ we get $r+m=s+n$.
\noindent
 We know that $E$ is trivial, so $\mathbb{P}(E)= X \times \mathbb{P}^r$. Applying Lemma 3.5 to $ q \circ \phi: X \times \mathbb{P}^r \longrightarrow Y $ we get varieties $ Z,T$  such that $ Y= Z\times T $ and morphisms $ X \xlongrightarrow {\pi_{1}} Z $, $ \mathbb{P}^r \xlongrightarrow { \pi} T$ which satisfies $ q \circ \phi = \pi_{1} \times \pi $. Since $ q \circ \phi $ is onto, so $\pi_{1}$, $ \pi $ are onto. As $Y$ is smooth projective, so $Z$ and $T$ are smooth projective. By Lemma 3.4, either $ T =$ pt or $ T= \mathbb{P}^r $.   
\vspace{2pt}
\\ \\
{\textbf{Case 1}}: $ T= pt$ so $ Y= Z $. Hence we have $\psi=\pi_1: X\longrightarrow Y$ such that the following diagram commutes,
\begin{center}
\begin{tikzcd}
\mathbb{P}(E) \arrow[r, "\phi"   ] \arrow[d,"p"]
&  \mathbb{P}(F)\arrow[d , swap,"q"] \\
X\arrow[r, "\psi"]
& |[, rotate=0]|  Y
\end{tikzcd}
\end{center}
By the same argument as in Theorem 3.9, $\psi$ is an isomorphism. So we have $\mathbb{P}(F)\cong \mathbb{P}((\psi^{-1})^*E)$ over $Y$. So, $F\cong (\psi^{-1})^*E\otimes L$ for some line bundle $L$. As $E$ is trivial we get $F$ is trivial with line bundle twist.\\\\
{\textbf{Case 2}}: If $T= \mathbb{P}^r$ then $ Y= Z\times \mathbb{P}^r = \mathbb{P}_{Z}(F_{1}) $ where $F_{1}=\mathcal{O}_{Z}^{\oplus r+1} $. The situation is summarized in the following commutative diagram:
\begin{center}
\begin{tikzcd}
X \times \mathbb{P}^{r} = \mathbb{P}(E) \arrow[r,swap]{ur}{\sim}[swap]{\phi} \arrow[d, "p"] \arrow[rd, "\pi_1 \times \pi"]
& \mathbb{P}(F) \arrow[d, "q"]   \\
X  & Y = Z \times \mathbb{P}^{r} .
\end{tikzcd}
\end{center}
For $(z,a)\in Z\times \mathbb{P}^r,$ we have $\mathbb{P}^s\cong q^{-1}(z,a)\cong (q\circ\phi)^{-1}(z,a)=\pi_1^{-1}(z)\times \pi^{-1}(a)$. As dim $Z=$ dim $X-s$ so, dim $\pi_1^{-1}(z)\medspace\geq s$ for all $z$ in $Z$. Hence, for all $a\in \mathbb{P}^r $, $\pi^{-1}(a)$ must be isomorphic to Spec $k$. So $\pi$ is an isomorphism, and we can assume without loss of generality that $\pi$ is the identity map on $\mathbb{P}^r$. Let $a\in \mathbb{P}^r $ be a point and
$$ i: Z \longrightarrow Z \times \mathbb{P}^r,\,\
  z \mapsto (z,a) $$
  $$j: X \longrightarrow X \times \mathbb{P}^r,  \,\ 
x \mapsto (x,a) $$\\be the inclusions. We have the fibre square \\ 
\begin{center}
\begin{tikzcd}
X \arrow[r, hook, "j"  ] \arrow[d,]
& X\times  \mathbb{P}^r \arrow[d,  ] \\
Z \arrow[r, hook, "i"]
& |[, rotate=0]| Z \times  \mathbb{P}^r
\end{tikzcd}.
\end{center}
Since $ X \times \mathbb{P}^r  \cong \mathbb{P}_{Y}(F) $ and $ Z \times \mathbb{P}^r = Y $ we have following fibre diagram \\ 
\begin{center}
\begin{tikzcd}
X \arrow[r, hook,   ] \arrow[d,]
& \mathbb{P}_{Y}(F) \arrow[d,  ] \\
Z \arrow[r, hook, "i"]
& |[, rotate=0]| Y
\end{tikzcd}.
\end{center}

It follows that $X \cong \mathbb{P}_Z(E_{1})$ where $ E_{1}= i^{\ast} F $.

\end{proof}

Now we show that if $X=Y$, (i) in the above theorem always holds.
\begin{corollary} \label{lemma: lemma 3.13}Let $X$ be a smooth projective variety and $F$ be a vector bundle of rank $s+1$, where $s\geq 1$. If $\mathbb{P}_{X}(F)\cong X\times\mathbb{P}^{s}$ as varieties, then $F$ is trivial up to line bundle twist.    
\end{corollary}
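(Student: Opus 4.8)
The plan is to reduce to the proof of Theorem~\ref{theorem:theorem 3.12} and run an induction on $\dim X$. Write $X\times\mathbb{P}^s=\mathbb{P}_X(\mathcal{O}_X^{\oplus(s+1)})$, so the hypothesis reads $\mathbb{P}_X(F)\cong\mathbb{P}_X(E)$ with $E=\mathcal{O}_X^{\oplus(s+1)}$ trivial and $Y=X$. The base case $\dim X=0$ is trivial, since then $F$ is an $(s+1)$-dimensional vector space. So assume $\dim X\geq1$ and that the statement holds for all smooth projective varieties of dimension $<\dim X$.

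Let $\phi\colon X\times\mathbb{P}^s\to\mathbb{P}_X(F)$ be the given isomorphism and $q\colon\mathbb{P}_X(F)\to X$ the bundle projection. Applying Lemma~3.5 to $q\circ\phi\colon X\times\mathbb{P}^s\to X$ (legitimate since $q_{\ast}\mathcal{O}_{\mathbb{P}_X(F)}=\mathcal{O}_X$) places us in exactly the Case~1 / Case~2 dichotomy of the proof of Theorem~\ref{theorem:theorem 3.12}. In Case~1 that argument already yields conclusion (i) of Theorem~\ref{theorem:theorem 3.12}, i.e.\ $F$ is trivial up to a line bundle twist, and we are done. So assume we are in Case~2. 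Then, as in that proof, we obtain a smooth projective variety $Z$ with $\dim Z=\dim X-s<\dim X$, an identification $X=Z\times\mathbb{P}^s$, a closed immersion $i\colon Z\hookrightarrow X=Z\times\mathbb{P}^s$, $z\mapsto(z,a_0)$ for a fixed $a_0$, and a rank $s+1$ bundle $E_1=i^{\ast}F$ on $Z$ for which $\mathbb{P}_Z(E_1)\cong X=Z\times\mathbb{P}^s$ as varieties; moreover the projection $\mathbb{P}_Z(E_1)\to Z$ corresponds under this isomorphism to the morphism $\pi_1\colon X=Z\times\mathbb{P}^s\to Z$ supplied by Lemma~3.5, and $q\circ\phi=\pi_1\times\mathrm{id}_{\mathbb{P}^s}$.

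The point not recorded in the statement of Theorem~\ref{theorem:theorem 3.12}, which I would extract here, is that $q\circ\phi=\pi_1\times\mathrm{id}_{\mathbb{P}^s}$ recovers $F$ itself: via $\phi$, the projective bundle $(\mathbb{P}_X(F),q)$ is carried to the $\mathbb{P}^s$-bundle $X\times\mathbb{P}^s\to Z\times\mathbb{P}^s=X$, $(x,a)\mapsto(\pi_1(x),a)$, and a direct computation with fibre products identifies this with the pullback $\mathrm{pr}_Z^{\ast}\mathbb{P}_Z(E_1)=\mathbb{P}_X(\mathrm{pr}_Z^{\ast}E_1)$ along the projection $\mathrm{pr}_Z\colon Z\times\mathbb{P}^s\to Z$. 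Since these are projective bundles of the same rank over $X$ which are isomorphic over $X$, item 3(b) of Section~2 gives $F\cong\mathrm{pr}_Z^{\ast}E_1\otimes L$ for some $L\in\mathrm{Pic}(X)$. Finally, the isomorphism $\mathbb{P}_Z(E_1)\cong Z\times\mathbb{P}^s$ together with the inductive hypothesis (applicable since $\dim Z<\dim X$, $\mathrm{rk}(E_1)=s+1$ and $s\geq1$) shows $E_1$ is trivial up to a line bundle twist on $Z$; hence $\mathrm{pr}_Z^{\ast}E_1$, and therefore $F$, is trivial up to a line bundle twist.

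I expect the only genuinely substantive step to be this identification in Case~2 — recognizing the non-standard $\mathbb{P}^s$-bundle $(x,a)\mapsto(\pi_1(x),a)$ on $X\times\mathbb{P}^s$ as the $\mathrm{pr}_Z$-pullback of $\mathbb{P}_Z(E_1)=(X,\pi_1)$ — together with keeping careful track of which isomorphisms are taken over which base, so that item 3(b) of Section~2 may be invoked. Everything else is bookkeeping with the commutative diagrams already appearing in the proof of Theorem~\ref{theorem:theorem 3.12}, plus the obvious fact that a pullback of a bundle which is trivial up to a line bundle twist is again trivial up to a line bundle twist.
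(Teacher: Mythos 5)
Your proposal is correct and follows essentially the same route as the paper: induction on $\dim X$, reduction to Case~2 of the proof of Theorem~\ref{theorem:theorem 3.12}, and the observation that $q\circ\phi=\pi_1\times\mathrm{id}$ exhibits $\mathbb{P}_X(F)$ as a projective bundle pulled back from $Z$, so that item 3(b) applies. The only (harmless) difference is one of ordering: the paper first applies the inductive hypothesis to $E_1$ and then identifies $\mathbb{P}_X(F)\cong X\times\mathbb{P}^s$ over $X$, whereas you first extract $F\cong\mathrm{pr}_Z^{\ast}E_1\otimes L$ and then invoke induction on $E_1$.
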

\begin{proof}
The proof is by induction on dim $X$. For dim $X=0$, there is nothing to show. Now assume  dim$\medspace X \geq 1$. We follow the proof of Theorem 3.12 and the notations introduced there, with $Y=X$ and $r=s$. If we are in case 1, we are done. Now suppose we are in case 2, which will give us a smooth projective variety $Z$ with dim$\medspace Z \less$ dim$\medspace X$, and $ Z\times \mathbb{P}^s= X \cong \mathbb{P}_{Z}(E_{1})$ for some vector bundle $E_1$ of rank $s+1$ on $Z.$ We have $\pi_1: X \cong \mathbb{P}_{Z}(E_{1})\to Z$ be the projection, and we again assume without loss of generality that $\pi:\mathbb{P}^s\to \mathbb{P}^s$ is the identity map. Since $ Z\times \mathbb{P}^s \cong \mathbb{P}_{Z}(E_{1})$,  $E_{1}$ is trivial up to line bundle twists by induction. So we have a commutative diagram,
\begin{center}
\begin{tikzcd}
 X   \arrow[rr, "\phi_1"] \arrow[swap]{dr}{\pi_{1}} &  _{\sim}  & Z\times\mathbb{P}^{s}\arrow{dl}{\pi_2} \\[5pt]
 & Z
\end{tikzcd}
\end{center}
where $\pi_2$  is the projection onto the first factor. This gives a commutative diagram,

\begin{center}    
\begin{tikzcd}
\mathbb{P}_{X}(F)\arrow[d, "q"]\arrow[r, "\phi^{-1}"]\arrow[dr, phantom ]&  X\times\mathbb{P}^{s} \arrow[r, "\phi_1\times\medspace \text{id}"] \arrow[d, "\pi_1\times\medspace \text{id}"] & Z\times \mathbb{P}^s\times\mathbb{P}^s \arrow[d, "\pi_2 \times \text{id}"]\\
X \arrow[r, "\text{id}"] & Z\times \mathbb{P}^s \arrow[r, "\text{id}"] & Z\times \mathbb{P}^s
\end{tikzcd}
\end{center}
Note that $ (\pi_2 \times \text{id})( z,\alpha , \beta) = (z,\beta)$ for all $ z\in Z, \alpha ,\beta \in \mathbb{P}^s $. This shows that $ \mathbb{P}_{X}(F)\cong X\times \mathbb{P}^s$ over $X$, where these are regarded as projective bundles over $X$ via the canonical projections to $X$. Hence $F$ is trivial up to line bundle twist. The proof is complete by induction.  
\end{proof}

\begin{corollary}\label{lemma:lemma 3.14}
Let $m, p_{1},...,p_{r}$ be natural numbers. Let $ E_{1},E_{2},...,E_{r}$ be vector bundles over $ \mathbb{P}^m $ with  $\mathrm{rk} (E_{i})= p_{i}+1.$ Suppose there is a variety $Z$ such that $ \mathbb{P}_{\mathbb{P}^m}(E_{1},E_{2},..,E_{r})$ isomorphic to $\mathbb{P}^m \times Z $. Then one of the following holds:

\begin{enumerate}
\item[(i)] There exists an integer $i$ in $\{1,...,r\}$ such that $E_{i}\cong \mathcal{O}_{\mathbb{P}^m}^{m+1}\otimes L$ 
for some line bundle $L$ over $\mathbb{P}^{m}$, and
$Z\cong\mathbb{P}_{\mathbb{P}^{m}}(E_{1},...,E_{i-1},E_{i+1},...,E_{r})$.
\item[(ii)] all $ E_{i} $'s are trivial up to line bundle twists and $ Z \cong 
\mathbb{P}^{p_{1}}\times ...\times  \mathbb{P}^{p_{r}}$.
\end{enumerate}   
\end{corollary}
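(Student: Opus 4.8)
The plan is induction on $r$. The base case $r=1$: write $\mathbb{P}^m\times Z=\mathbb{P}_Z(\mathcal{O}_Z^{\oplus(m+1)})$ and apply Theorem~\ref{theorem:theorem 3.12} to $\mathbb{P}_Z(\mathcal{O}_Z^{\oplus(m+1)})\cong\mathbb{P}_{\mathbb{P}^m}(E_1)$, with the trivial bundle in the role of ``$E$''; its first alternative, after a dimension count forcing $p_1=m$, is conclusion (i) (the empty multiprojective bundle being $\mathbb{P}^m$), and its second alternative forces the auxiliary base there to be a point, so $Z\cong\mathbb{P}^{p_1}$ and Corollary~\ref{lemma: lemma 3.13} gives conclusion (ii). For the inductive step, put $W=\mathbb{P}_{\mathbb{P}^m}(E_1,\dots,E_r)$, $V=\mathbb{P}_{\mathbb{P}^m}(E_1,\dots,E_{r-1})$ with structure map $\sigma\colon V\to\mathbb{P}^m$, so $W=\mathbb{P}_V(\sigma^{\ast}E_r)$; here $Z$ is smooth projective and, comparing dimensions, $\dim Z=\sum_{i=1}^r p_i$. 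Applying Theorem~\ref{theorem:theorem 3.12} to $\mathbb{P}_V(\sigma^{\ast}E_r)\cong\mathbb{P}_Z(\mathcal{O}_Z^{\oplus(m+1)})$ gives two cases.

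Case 1: $Z\cong V$ and $\sigma^{\ast}E_r$ is trivial up to a line bundle twist; since $\sigma_{\ast}\mathcal{O}_V=\mathcal{O}_{\mathbb{P}^m}$, Lemma~3.1 makes $E_r$ trivial up to a line bundle twist, and the dimension count forces $p_r=m$. Case 2: there are a smooth projective $Z''$ and bundles $E_1''$ of rank $p_r+1$, $F_1''$ of rank $m+1$ on $Z''$, with $F_1''$ trivial, $Z\cong\mathbb{P}_{Z''}(E_1'')$, and $V\cong\mathbb{P}_{Z''}(F_1'')=\mathbb{P}^m\times Z''$; the inductive hypothesis applies to this last isomorphism.

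The step turns on one observation: if some $E_j$ is trivial up to a line bundle twist with $\operatorname{rk}E_j=m+1$, then $\mathbb{P}_{\mathbb{P}^m}(E_j)\cong\mathbb{P}^m\times\mathbb{P}^m$ over $\mathbb{P}^m$ with the first factor the base, so the fibre product over $\mathbb{P}^m$ with the remaining factors gives $W\cong\mathbb{P}^m\times\mathbb{P}_{\mathbb{P}^m}(E_1,\dots,\widehat{E_j},\dots,E_r)$, and comparing with $W\cong\mathbb{P}^m\times Z$ via Lemma~\ref{lemma:lemma 3.6} yields conclusion (i) with this $j$. This covers Case 1 (with $j=r$, $p_r=m$), the subcase in which the inductive hypothesis gives conclusion (i) for $V$, and the subcase in which it gives conclusion (ii) for $V$ but $p_i=m$ for some $i<r$. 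So the remaining situation is: the inductive hypothesis gives conclusion (ii) for $V$, so $E_1,\dots,E_{r-1}$ are all trivial up to line bundle twists and $Z''\cong\mathbb{P}^{p_1}\times\cdots\times\mathbb{P}^{p_{r-1}}$, and $p_i\neq m$ for all $i<r$. Then $V=\mathbb{P}^m\times\prod_{i<r}\mathbb{P}^{p_i}$ with $\sigma$ the first projection, so $W=\mathbb{P}_V(\sigma^{\ast}E_r)\cong\prod_{i<r}\mathbb{P}^{p_i}\times\mathbb{P}_{\mathbb{P}^m}(E_r)$.

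In this remaining situation we must show $E_r$ is trivial up to a line bundle twist and $Z\cong\prod_{i\le r}\mathbb{P}^{p_i}$. Tracing through the proof of Theorem~\ref{theorem:theorem 3.12}, the bundle $E_1''$ above is $i^{\ast}\sigma^{\ast}E_r$ for the inclusion $i\colon Z''\hookrightarrow V\cong Z''\times\mathbb{P}^m$, $z\mapsto(z,a)$. Since $V$ carries the two product structures $\mathbb{P}^m\times\prod_{i<r}\mathbb{P}^{p_i}$ (with $\sigma$ the first projection) and $Z''\times\mathbb{P}^m$ with $Z''\cong\prod_{i<r}\mathbb{P}^{p_i}$, and $m\notin\{p_1,\dots,p_{r-1}\}$, uniqueness of factorization of a finite product of projective spaces --- and the description of its automorphism group, both of which follow from comparing Chow rings --- forces $\sigma$ to agree, up to an automorphism of $\mathbb{P}^m$, with the projection $Z''\times\mathbb{P}^m\to\mathbb{P}^m$. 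Hence $\sigma\circ i$ is constant, $E_1''=(\sigma\circ i)^{\ast}E_r$ is trivial, $Z\cong\mathbb{P}_{Z''}(E_1'')=Z''\times\mathbb{P}^{p_r}\cong\prod_{i\le r}\mathbb{P}^{p_i}$, and so $W\cong\mathbb{P}^m\times\prod_{i\le r}\mathbb{P}^{p_i}$; cancelling the factors $\mathbb{P}^{p_i}$, $i<r$, one at a time from $\prod_{i<r}\mathbb{P}^{p_i}\times\mathbb{P}_{\mathbb{P}^m}(E_r)\cong\prod_{i<r}\mathbb{P}^{p_i}\times(\mathbb{P}^m\times\mathbb{P}^{p_r})$ by Lemma~\ref{lemma:lemma 3.6} gives $\mathbb{P}_{\mathbb{P}^m}(E_r)\cong\mathbb{P}^m\times\mathbb{P}^{p_r}$, whence $E_r$ is trivial up to a line bundle twist by Corollary~\ref{lemma: lemma 3.13}; this is conclusion (ii). The most delicate point is exactly this last one: reconciling the two product structures on $V$ requires uniqueness of factorizations of products of projective spaces, and not just the $\mathbb{P}^n$-cancellation of Lemma~\ref{lemma:lemma 3.6}.
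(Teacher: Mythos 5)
Your proof is correct, and its skeleton coincides with the paper's: induction on $r$, the reduction of $W=\mathbb{P}_{\mathbb{P}^m}(E_1,\dots,E_r)$ to a projective bundle over $V=\mathbb{P}_{\mathbb{P}^m}(E_1,\dots,E_{r-1})$, an application of Theorem~\ref{theorem:theorem 3.12} to $\mathbb{P}_V(\sigma^{\ast}E_r)\cong\mathbb{P}_Z(\mathcal{O}_Z^{m+1})$, and the observation that any factor of the form $\mathcal{O}_{\mathbb{P}^m}^{m+1}\otimes L$ splits off a $\mathbb{P}^m$ and yields conclusion (i) via Lemma~\ref{lemma:lemma 3.6} (the paper phrases this as ``otherwise we are done again by Lemma 3.6''). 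Where you genuinely diverge is the residual case in which $E_1,\dots,E_{r-1}$ are trivial up to twists with $p_i\neq m$ for $i<r$. The paper handles this by contradiction: assuming $E_r$ is not trivial up to twist, it applies Theorem~\ref{theorem:theorem 3.12} repeatedly (reverse induction on $i$, each step legitimate because $p_i\neq m$) to peel the factors $\mathbb{P}^{p_i}$ off $\mathbb{P}(E_r)\times\prod_{i<r}\mathbb{P}^{p_i}\cong\mathbb{P}^m\times Z$ until it reaches $\mathbb{P}(E_r)\cong\mathbb{P}^m\times W_0$ and invokes the base case. You instead identify $E_1''=(\sigma\circ i)^{\ast}E_r$ inside the proof of Theorem~\ref{theorem:theorem 3.12} and argue that $\sigma\circ i$ is constant because the two product decompositions of $V$ must match up their unique $\mathbb{P}^m$ factors; this gives $Z\cong\prod_{i\le r}\mathbb{P}^{p_i}$ directly and then $E_r$ by cancellation and Corollary~\ref{lemma: lemma 3.13}. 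Both work; your route is shorter and yields $Z$ explicitly without a detour through the ``all trivial'' case, but it imports the rigidity of product decompositions of $\prod\mathbb{P}^{n_i}$ (factors of equal dimension are matched by any isomorphism, and projections are canonical up to $\mathrm{Aut}(\mathbb{P}^m)$ when $m$ occurs once). That fact is standard and the paper itself invokes the two-factor version in Theorem~\ref{theorem:theorem 4.5}, but it is nowhere stated or proved in the paper, and it needs slightly more than ``comparing Chow rings'': the graded ring alone only pins down the hyperplane classes up to sign, and one must use effectivity (or the nef cone, or the identification of the projection with the map given by the complete linear system of each hyperplane class) to conclude that the projections themselves correspond. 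If you want to stay strictly within the paper's toolkit, replace that paragraph by the paper's reverse-induction with Theorem~\ref{theorem:theorem 3.12}; otherwise, state and prove the factorization lemma once, since it would also clean up the argument in Theorem~\ref{theorem:theorem 4.5}.
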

\begin{proof}
We will prove this by induction on $ r$. For $ r=1$, we have $$ \mathbb{P}_{\mathbb{P}^m}(E_{1}) \cong \mathbb{P}^m\times Z = \mathbb{P}_{Z}(\mathcal{O}_Z^{m+1}).$$ By Theorem 3.9, either $Z = \mathbb{P}^{p_{1}}$ (and hence $E_{1}$ is trivial up to line bundle twists by Corollary 3.13) or there is an isomorphism $\mathbb{P}^{m}\xlongrightarrow{\psi} Z$, such that $E_{1} = \psi^{\ast}\mathcal{O}_Z^{m+1}\otimes L$ for some line bundle $L$. In both cases, we are done. 

Now let $r\geq 2$. If all the $E_{i}$'s are trivial up to line bundle twists, then $ Z\times \mathbb{P}^m\cong \mathbb{P}^{p_{1}}\times ...\times \mathbb{P}^{p_{r}}\times \mathbb{P}^m$, and then using Lemma 3.6, we get $ Z \cong \mathbb{P}^{p_{1}}\times ...\times \mathbb{P}^{p_{r}}.$ So suppose some $E_{i}$, say $E_{r}$, is not trivial up to a line bundle twist and also assume none of $E_{1},..., E_{r}$ is of the form $\mathcal{O}_{\mathbb{P}^m}^{m+1}\otimes L$ because otherwise we are done again by Lemma 3.6. We have $$\mathbb{P}_{Z}( \mathcal{O}_Z^{m+1})\cong \mathbb{P}_{\mathbb{P}^m}(E_{1},E_{2},...,E_{r})\cong \mathbb{P}_{\mathbb{P}_{\mathbb{P}^m}(E_{1},...,E_{r-1})}(\pi^{\ast} E_{r}) $$ where 
 $ \pi : \mathbb{P}_{\mathbb{P}^m} ( E_{1},...,E_{r-1})\longrightarrow \mathbb{P}^m $ is the projection. We have either $(i)$ or $(ii)$ of Theorem 3.12 holds, with $X=Z,$ $Y=\mathbb{P}_{\mathbb{P}^m} ( E_{1},..., E_{r-1})$. If $(i)$ holds $ \pi^{\ast}E_{r}$ is trivial up to a line bundle twist. So  $E_{r}$ is trivial up to line bundle twist by Lemma 3.1, which is a contradiction. Hence, $(ii)$ of Theorem 3.12 holds. So there is a smooth projective variety $ Z_{1}$   such that $ \mathbb{P}_{\mathbb{P}^m}(E_{1},..., E_{r-1}) \cong \mathbb{P}^m \times Z_{1}$, now as none of the $ E_{i} $'s is $ \mathcal{O}_{\mathbb{P}^m}^{m+1}\bigotimes L$, we have $ E_{1},..., E_{r-1}$ are trivial up to line bundle twists, by induction. Since we assumed that none of $E_{1},..., E_{r}$ is of the form $\mathcal{O}^{m+1}\otimes L$, we see that $ p_{i} \neq m$  for all $ i  \less r $. 
 
 We have, $ \mathbb{P}^m \times Z \cong \mathbb{P}_{\mathbb{P}^m}(E_{1},...,E_{r}) \cong \mathbb{P}_{\mathbb{P}^m}(E_{r}) \times \mathbb{P}^{p_{1}} \times ...\times \mathbb{P}^{p_{r-1}}$. Since $ m\neq p_{i}\hspace{2mm}$  for all $i\less r $, applying Theorem 3.12 repeatedly we get $ \mathbb{P}_{\mathbb{P}^m}(E_{r}) \times \mathbb{P}^{p_{1}} \times ...\times \mathbb{P}^{p_{i}}\cong \mathbb{P}^m \times W_i  $ for smooth projective varieties $W_i $, for each $0\leq i<r$ (use reverse induction on $i$). So, $ \mathbb{P}_{\mathbb{P}^m}(E_{r})\cong \mathbb{P}^m \times W_0  $. By the base case of Corollary 3.14 (which we already proved),  we get $E_{r} $  is trivial up to line bundle twists, which contradicts our assumption. Now by induction, we complete the proof.
\end{proof}
 \begin{corollary}
Let $ p_{1},...,p_{r}$ be distinct integers and $X, Z_{1},..., Z_{r}$ be smooth projective varieties with $ X\cong \medspace\mathbb{P}^{p_{i}}\times Z_{i}$ for all  $i$. Then there exists a smooth projective variety $W$ such that $ X\cong \medspace\mathbb{P}^{p_{1}}\times ...\times \mathbb{P}^{p_{r}} \times W .$
 \end{corollary}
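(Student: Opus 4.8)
The plan is to argue by induction on $r$. The base case $r=1$ is trivial (take $W=Z_{1}$), so everything hinges on the inductive step, and for that I would isolate the following cancellation statement, which is essentially the repeated use of Theorem \ref{theorem:theorem 3.12} already appearing at the end of the proof of Corollary \ref{lemma:lemma 3.14}.

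\textbf{Claim.} If $a\neq b$ are non-negative integers and $V,V'$ are smooth projective varieties with $\mathbb{P}^{a}\times V\cong\mathbb{P}^{b}\times V'$, then there is a smooth projective variety $U$ with $V\cong\mathbb{P}^{b}\times U$. To prove this I would write $\mathbb{P}^{a}\times V=\mathbb{P}_{V}(\mathcal{O}_{V}^{a+1})$ and $\mathbb{P}^{b}\times V'=\mathbb{P}_{V'}(\mathcal{O}_{V'}^{b+1})$ and apply Theorem \ref{theorem:theorem 3.12} with the trivial bundle $\mathcal{O}_{V}^{a+1}$ in the role of $E$; comparing dimensions of the two isomorphic projective bundles gives $a+\dim V=b+\dim V'$. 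Alternative (i) of Theorem \ref{theorem:theorem 3.12} would give $V\cong V'$, hence $\dim V=\dim V'$ and therefore $a=b$, contradicting the hypothesis; so alternative (ii) must hold, producing a smooth projective variety $Z$ with $V'\cong Z\times\mathbb{P}^{a}$. Then $\mathbb{P}^{a}\times V\cong\mathbb{P}^{b}\times V'\cong\mathbb{P}^{a}\times(\mathbb{P}^{b}\times Z)$, and Lemma \ref{lemma:lemma 3.6} lets me cancel the common factor $\mathbb{P}^{a}$ to conclude $V\cong\mathbb{P}^{b}\times Z$; take $U=Z$.

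Granting the Claim, the inductive step runs as follows. Fix $r\geq 2$ and assume the corollary for $r-1$. For each $j\in\{2,\dots,r\}$ we have $\mathbb{P}^{p_{1}}\times Z_{1}\cong X\cong\mathbb{P}^{p_{j}}\times Z_{j}$ with $p_{1}\neq p_{j}$, so the Claim (with $a=p_{1}$, $V=Z_{1}$, $b=p_{j}$, $V'=Z_{j}$) yields a smooth projective variety $U_{j}$ with $Z_{1}\cong\mathbb{P}^{p_{j}}\times U_{j}$. Thus $Z_{1}$ is a smooth projective variety admitting a product decomposition $Z_{1}\cong\mathbb{P}^{p_{j}}\times U_{j}$ for each of the $r-1$ distinct integers $p_{2},\dots,p_{r}$; applying the inductive hypothesis to $Z_{1}$ gives a smooth projective $W$ with $Z_{1}\cong\mathbb{P}^{p_{2}}\times\cdots\times\mathbb{P}^{p_{r}}\times W$, whence $X\cong\mathbb{P}^{p_{1}}\times Z_{1}\cong\mathbb{P}^{p_{1}}\times\cdots\times\mathbb{P}^{p_{r}}\times W$, completing the induction.

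The only step that needs genuine care is the Claim: namely, correctly ruling out alternative (i) of Theorem \ref{theorem:theorem 3.12} from the two dimension identities, and then invoking Lemma \ref{lemma:lemma 3.6} to strip off the extra projective-space factor. Once the Claim is in place, the induction is purely formal, so I do not expect any further obstacle.
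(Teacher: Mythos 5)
Your proposal is correct and follows essentially the same route as the paper: induction on $r$, with the inductive step powered by applying Theorem \ref{theorem:theorem 3.12} to the pair of trivial projective bundles $\mathbb{P}_{V}(\mathcal{O}_{V}^{a+1})\cong\mathbb{P}_{V'}(\mathcal{O}_{V'}^{b+1})$ and ruling out alternative (i) by the dimension count. The only cosmetic difference is that the paper applies Theorem \ref{theorem:theorem 3.12} with the roles arranged so that the variety you want to decompose sits in the $Y$-slot and inherits the product structure $Z\times\mathbb{P}^{a}$ directly from alternative (ii), whereas you land the product on the other factor and then invoke the Fujita cancellation of Lemma \ref{lemma:lemma 3.6} to transfer it — a harmless extra step.
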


\begin{proof}
 We will prove this by induction on $r$. For $r=1 $, there is nothing to show. For $ r\geq 2$, we have $$ \mathbb{P}_{Z_{r}}( \mathcal{O}_{Z_r}^{p_{r}+1})= \mathbb{P}^{p_{r}}\times Z_{r} \cong \mathbb{P}^{p_{i}}\times Z_{i} \cong \mathbb{P}_{Z_{i}}( \mathcal{O}_{Z_i}^{p_{i}+1})$$ for all  $i \less r$ . Since $p_{i}\neq p_{r} $, Theorem 3.12 shows there exist smooth projective varieties $W_{i}$ such that $ Z_{r}\cong \mathbb{P}^{p_{i}}\times W_{i}$ for all $i\less r$.   By induction, there exists a smooth projective variety $W$  such that $ Z_{r}\cong \mathbb{P}^{p_{1}}\times ...\times \mathbb{P}^{p_{r-1}} \times W$. So finally we have, $$ X\cong \mathbb{P}^{p_{r}}\times Z_{r}\cong \mathbb{P}^{p_{1}}\times ...\times \mathbb{P}^{p_{r-1}} \times \mathbb{P}^{p_{r}} \times W . $$
\end{proof}
\begin{corollary}
    Let $ r\geq 1$, $ 0\leq \ell \leq r $ and $ p_{1},p_{2},...,p_{r},m\geq1 $ be integers such that $ p_{1}=p_{2}=...=p_{\ell}=m$ and $ p_{i}\neq m \medspace$ for all  $i\gtr \ell .$ Let $Z$ be smooth projective variety and $F $ be a vector bundle of rank $m+1$ over $Z$. Suppose that $\mathbb{P}^{p_{1}}\times ...\times \mathbb{P}^{p_{r}} \cong \mathbb{P}_{Z}(F).$ Then $\ell \geq 1$, $ Z = \mathbb{P}^{p_{2}} \times...\times 
    \mathbb{P}^{p_{r}} $, $F $ is trivial with line bundle twist.
\end{corollary}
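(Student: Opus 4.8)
The plan is to induct on $r$. The pivotal observation is that $\mathbb{P}^{p_1}\times\cdots\times\mathbb{P}^{p_r}$ is the \emph{trivial} $\mathbb{P}^{p_1}$-bundle over $W:=\mathbb{P}^{p_2}\times\cdots\times\mathbb{P}^{p_r}$, so the hypothesis becomes $\mathbb{P}_W(\mathcal{O}_W^{\,p_1+1})\cong\mathbb{P}_Z(F)$ with $\mathcal{O}_W^{\,p_1+1}$ trivial and $\mathrm{rk}\,F=m+1$; this is exactly the setting of Theorem~\ref{theorem:theorem 3.12}, which splits the analysis into two cases. In the base case $r=1$, $W$ is a point: Theorem~\ref{theorem:theorem 3.12}(ii) is impossible (it would give $\mathrm{pt}\cong\mathbb{P}_{Z'}(E_1)$ with $\mathrm{rk}\,E_1\geq 2$), so we are in case (i), whence $Z\cong\mathrm{pt}$; then $\mathbb{P}^m\cong\mathbb{P}_Z(F)\cong\mathbb{P}^{p_1}$ forces $p_1=m$ (so $\ell=1$), $Z=\mathrm{pt}=\mathbb{P}^{p_2}\times\cdots\times\mathbb{P}^{p_r}$ (empty product), and $F$, a bundle on a point, is trivial.

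For $r\geq 2$, apply Theorem~\ref{theorem:theorem 3.12} to $\mathbb{P}_W(\mathcal{O}_W^{\,p_1+1})\cong\mathbb{P}_Z(F)$. If case (i) holds, then $W\cong Z$ and $F$ is trivial up to a line bundle twist, so (using $\mathbb{P}(F\otimes L)\cong\mathbb{P}(F)$) $\mathbb{P}_Z(F)\cong Z\times\mathbb{P}^m\cong W\times\mathbb{P}^m$; comparing with $\mathbb{P}_Z(F)\cong\mathbb{P}^{p_1}\times W$ gives $\mathbb{P}^{p_1}\times W\cong\mathbb{P}^m\times W$, and cancelling the factors $\mathbb{P}^{p_2},\dots,\mathbb{P}^{p_r}$ of $W$ one at a time by Lemma~\ref{lemma:lemma 3.6} yields $\mathbb{P}^{p_1}\cong\mathbb{P}^m$, hence $p_1=m$, $\ell\geq 1$, $Z\cong W=\mathbb{P}^{p_2}\times\cdots\times\mathbb{P}^{p_r}$, and $F$ trivial up to a line bundle twist --- which is all we need.

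If case (ii) holds, Theorem~\ref{theorem:theorem 3.12} yields a smooth projective $Z'$, a rank $m+1$ bundle $E_1$ on $Z'$, and a trivial rank $p_1+1$ bundle $F_1$ on $Z'$ with $W\cong\mathbb{P}_{Z'}(E_1)$ and $Z=\mathbb{P}_{Z'}(F_1)=Z'\times\mathbb{P}^{p_1}$. Now $\mathbb{P}^{p_2}\times\cdots\times\mathbb{P}^{p_r}\cong\mathbb{P}_{Z'}(E_1)$ with $\mathrm{rk}\,E_1=m+1$ is an instance of the statement for $r-1$ exponents (the ones among $p_2,\dots,p_r$ equal to $m$ being exactly $p_2,\dots,p_\ell$, the rest $\neq m$). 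The induction hypothesis applies; reading its conclusion $\ell\geq 1$ for this reduced configuration forces the number of $i\in\{2,\dots,r\}$ with $p_i=m$ to be $\geq 1$, i.e.\ $\ell\geq 2$ (in particular $\ell\geq 1$), it gives $E_1$ trivial up to a twist, and it identifies $Z'\cong\mathbb{P}^{p_3}\times\cdots\times\mathbb{P}^{p_r}$ after reordering the equal exponents $p_2=\cdots=p_\ell=m$ so that the removed factor corresponds to $p_2$. Then, since $p_1=m=p_2$, we get $Z=Z'\times\mathbb{P}^{p_1}\cong\mathbb{P}^{p_2}\times\mathbb{P}^{p_3}\times\cdots\times\mathbb{P}^{p_r}$; and finally $\mathbb{P}_Z(F)\cong\mathbb{P}^{p_1}\times W\cong\mathbb{P}^m\times Z\cong Z\times\mathbb{P}^m$, so $F$ is trivial up to a line bundle twist by Corollary~\ref{lemma: lemma 3.13}.

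The main obstacle I anticipate is the bookkeeping in case (ii): one must check that the $(r-1)$-term configuration $(p_2,\dots,p_r)$ honestly satisfies the hypotheses (with a possibly smaller count of exponents equal to $m$, including the borderline count $0$), extract from the induction hypothesis not merely its surface statement but the assertion ``at least one exponent equals $m$'' that rules out $\ell\leq 1$, and verify that permuting the equal exponents $p_1=\cdots=p_\ell=m$ does not affect the argument. Everything else is a direct appeal to Theorem~\ref{theorem:theorem 3.12}, Lemma~\ref{lemma:lemma 3.6}, and Corollary~\ref{lemma: lemma 3.13}.
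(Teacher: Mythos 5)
Your proposal is correct and follows essentially the same route as the paper: induct on $r$, realize the product of projective spaces as a trivial projective bundle over the product of the remaining factors, apply Theorem \ref{theorem:theorem 3.12} and split into its two cases, invoke the induction hypothesis in the nontrivial case, and finish with Corollary \ref{lemma: lemma 3.13}. The only (immaterial) difference is that you peel off the first factor $\mathbb{P}^{p_1}$ while the paper peels off the last factor $\mathbb{P}^{p_r}$, which shifts some bookkeeping (the paper's case 1 forces all $p_i=m$ at once, yours forces only $p_1=m$) but changes nothing of substance.
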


\begin{proof}
We will prove by induction on $r$. For $r=1$ it is clear. Now assume $r\geq 2 $. We have $ \mathbb{P}_{\mathbb{P}^{p_{1}}\times... \times \mathbb{P}^{p_{r-1}}}(\mathcal{O}_{Z}^{p_{r}+1})\cong \mathbb{P}_{Z}(F)$ so, by Theorem 3.12 one of the following cases occurs:\\\\
${\textbf{Case 1:}}$  $Z \cong \mathbb{P}^{p_{1}}\times ...\times \mathbb{P}^{p_{r-1}}$, $ F \cong \mathcal{O}_Z^{p_{r}+1} \bigotimes L$ for some line bundle $L$. In this case $p_r=m$, so we must have $\ell=r $ and $p_{1}=p_{2}=...=p_{r}=m $, hence we are done.\\\\
${\textbf{Case 2:}}$ $Z = \mathbb{P}^{p_{r}}\times Z_{1} $, $\mathbb{P}^{p_{1}}\times ...\times \mathbb{P}^{p_{r-1}} $ is a $ \mathbb{P}^m$- bundle over $Z_{1} $ for some smooth projective variety $Z_{1}$. By induction, we have $ \ell \geq 1$, $ Z_{1} = \mathbb{P}^{p_{2}} \times ...\times \mathbb{P}^{p_{r-1}}.$ So, $ Z= \mathbb{P}^{p_{2}} \times ...\times \mathbb{P}^{p_{r}} $ and $ Z  \times \mathbb{P}^m \cong \mathbb{P}_{Z}(F).$ Now Corollary 3.13 shows $ F$ is trivial up to line bundle twist. Now induction completes the proof.
\end{proof}

This is the case $s=1$ of the following more general theorem. 
\begin{theorem}\label{theorem:theorem 3.17}
Let $ r\geq 1$ and $ p_{1},p_{2},...,p_{r}\geq 1 $ be integers such that $\mathbb{P}^{p_{1}}\times ...\times \mathbb{P}^{p_{r}} \cong \mathbb{P}_{Z}(F_{1},F_{2},...,F_{s}),$ where $Z$ is a smooth projective variety and $F_{i}$'s are vector bundles on $Z $ of rank $q_{i}+1 \medspace$ for all $i$. Then $ s\leq r$, and up to a permutation of the $p_i$'s we have $ p_{i}=q_{i}$ for $1 \leq i\leq s$, $ Z \cong \mathbb{P}^{p_{s+1}}\times ...\times \mathbb{P}^{p_{r}} $ and $ F_{i}$'s are trivial up to line bundle twists.      
\end{theorem}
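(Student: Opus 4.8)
The plan is to induct on $s$, the case $s=1$ being Corollary 3.16, which is already proven. So let $s\ge 2$ and assume the statement for all smaller values of $s$; there is no loss in taking every $F_i$ to have rank at least two. Comparing Picard ranks gives $s\le r$ at once, since each projectivization by a bundle of rank $\ge 2$ raises the Picard rank by one while $\mathbb{P}^{p_1}\times\cdots\times\mathbb{P}^{p_r}$ has Picard rank $r$; in particular $r\ge 2$. Now put $W=\mathbb{P}_Z(F_1,\ldots,F_{s-1})$ with structure map $\pi\colon W\to Z$, so that $\mathbb{P}_Z(F_1,\ldots,F_s)\cong\mathbb{P}_W(\pi^{\ast}F_s)$, and also $\mathbb{P}^{p_1}\times\cdots\times\mathbb{P}^{p_r}\cong\mathbb{P}_{\mathbb{P}^{p_2}\times\cdots\times\mathbb{P}^{p_r}}(\mathcal{O}^{\,p_1+1})$, the projectivization of the trivial bundle of rank $p_1+1$. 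All these varieties are smooth projective, so Theorem \ref{theorem:theorem 3.12} applies to the resulting isomorphism $\mathbb{P}_{\mathbb{P}^{p_2}\times\cdots\times\mathbb{P}^{p_r}}(\mathcal{O}^{\,p_1+1})\cong\mathbb{P}_W(\pi^{\ast}F_s)$, and I would split into its two conclusions.

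If conclusion (i) of Theorem \ref{theorem:theorem 3.12} holds, then $\mathbb{P}^{p_2}\times\cdots\times\mathbb{P}^{p_r}\cong W=\mathbb{P}_Z(F_1,\ldots,F_{s-1})$ and $\pi^{\ast}F_s$ is trivial up to a line bundle twist; since $\pi_{\ast}\mathcal{O}_W=\mathcal{O}_Z$, Lemma 3.1 makes $F_s$ itself trivial up to a twist. Applying the inductive hypothesis to $\mathbb{P}^{p_2}\times\cdots\times\mathbb{P}^{p_r}\cong\mathbb{P}_Z(F_1,\ldots,F_{s-1})$ shows that $F_1,\ldots,F_{s-1}$ are trivial up to twists, that $q_1,\ldots,q_{s-1}$ equal (after a permutation) $s-1$ of $p_2,\ldots,p_r$, and that $Z$ is the product of the remaining $r-s$ of them. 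To identify $q_s$, note that $\mathbb{P}_Z(F_1,\ldots,F_s)\cong W\times\mathbb{P}^{q_s}\cong\mathbb{P}^{p_2}\times\cdots\times\mathbb{P}^{p_r}\times\mathbb{P}^{q_s}$, hence $\mathbb{P}^{p_1}\times\cdots\times\mathbb{P}^{p_r}\cong\mathbb{P}^{p_2}\times\cdots\times\mathbb{P}^{p_r}\times\mathbb{P}^{q_s}$, and cancelling the common factors $\mathbb{P}^{p_2},\ldots,\mathbb{P}^{p_r}$ one at a time by Lemma \ref{lemma:lemma 3.6} leaves $\mathbb{P}^{p_1}\cong\mathbb{P}^{q_s}$, i.e.\ $q_s=p_1$. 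Reordering the $p_i$'s so the ones matched with $q_1,\ldots,q_s$ come first settles this case.

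If conclusion (ii) of Theorem \ref{theorem:theorem 3.12} holds, we obtain a smooth projective variety $Z'$ and bundles $E'$ of rank $q_s+1$ and $F'$ of rank $p_1+1$ on $Z'$, with $F'$ trivial, such that $\mathbb{P}^{p_2}\times\cdots\times\mathbb{P}^{p_r}\cong\mathbb{P}_{Z'}(E')$ and $W=\mathbb{P}_{Z'}(F')=Z'\times\mathbb{P}^{p_1}$. Applying the case $s=1$ (Corollary 3.16) to $\mathbb{P}^{p_2}\times\cdots\times\mathbb{P}^{p_r}\cong\mathbb{P}_{Z'}(E')$ shows that $q_s$ equals one of $p_2,\ldots,p_r$, say $q_s=p_2$ after renaming, and that $Z'\cong\mathbb{P}^{p_3}\times\cdots\times\mathbb{P}^{p_r}$; hence $W=\mathbb{P}_Z(F_1,\ldots,F_{s-1})\cong\mathbb{P}^{p_1}\times\mathbb{P}^{p_3}\times\cdots\times\mathbb{P}^{p_r}$, a product of $r-1$ projective spaces. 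The inductive hypothesis applied to this isomorphism shows $F_1,\ldots,F_{s-1}$ are trivial up to twists, $q_1,\ldots,q_{s-1}$ match (after a permutation) $s-1$ of the $p_i$ with $i\neq 2$, and $Z$ is the product of the remaining $r-s$ of them. Finally, since $F_1,\ldots,F_{s-1}$ are trivial up to twists, $\mathbb{P}_Z(F_1,\ldots,F_s)\cong\mathbb{P}_Z(F_s)\times\mathbb{P}^{q_1}\times\cdots\times\mathbb{P}^{q_{s-1}}$; comparing with $\mathbb{P}^{p_1}\times\cdots\times\mathbb{P}^{p_r}$ and cancelling the factors $\mathbb{P}^{q_1},\ldots,\mathbb{P}^{q_{s-1}}$ (each occurring among the $\mathbb{P}^{p_i}$) by Lemma \ref{lemma:lemma 3.6} leaves $\mathbb{P}_Z(F_s)\cong Z\times\mathbb{P}^{q_s}$ (the surviving $\mathbb{P}^{p_i}$'s being $\mathbb{P}^{p_2}=\mathbb{P}^{q_s}$ together with the factors of $Z$). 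Corollary \ref{lemma: lemma 3.13} then makes $F_s$ trivial up to a twist, and reordering the $p_i$'s completes the induction.

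The geometric input — peeling off one bundle at a time via Theorem \ref{theorem:theorem 3.12} — is routine. I expect the main obstacle to be purely combinatorial: tracking, through the recursion, which of the $p_i$ get matched to the $q_j$ and which get absorbed into $Z$, and justifying carefully the repeated cancellation of a matching $\mathbb{P}^n$-factor from a product of projective spaces (which is legitimate by iterating Lemma \ref{lemma:lemma 3.6}, all the varieties in play being smooth projective).
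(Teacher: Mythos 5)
Your proof is correct, and it is the same induction on $s$ with base case Corollary 3.16 that the paper uses, but the inductive step is executed quite differently. The paper's entire step is a single application of Corollary 3.16: it writes $\mathbb{P}_{Z}(F_{1},\dots,F_{s})=\mathbb{P}_{W'}(\pi^{\ast}F_{1})$ with $W'=\mathbb{P}_{Z}(F_{2},\dots,F_{s})$ and applies Corollary 3.16 with the multiprojective bundle $W'$ itself as the base (the corollary is stated for an arbitrary smooth projective base, which is exactly what makes this work); this immediately yields $p_{1}=q_{1}$ up to permutation, $W'\cong\mathbb{P}^{p_{2}}\times\cdots\times\mathbb{P}^{p_{r}}$, and $\pi^{\ast}F_{1}$ trivial up to twist, after which Lemma 3.1 and the inductive hypothesis finish in one line. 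You instead peel off $F_{s}$ over $W=\mathbb{P}_{Z}(F_{1},\dots,F_{s-1})$, present the product of projective spaces as $\mathbb{P}_{\mathbb{P}^{p_{2}}\times\cdots\times\mathbb{P}^{p_{r}}}(\mathcal{O}^{p_{1}+1})$, and invoke Theorem 3.12, which forces you to handle its two conclusions separately and, in case (ii), to re-enter Corollary 3.16, re-run the induction on a rearranged product, and finish $F_{s}$ by a chain of Lemma 3.6 cancellations followed by Corollary 3.13. All of these steps check out (the cancellations are legitimate since each $\mathbb{P}^{q_{j}}$ being cancelled does occur among the $\mathbb{P}^{p_{i}}$, and Corollary 3.13 only needs an abstract isomorphism), so what you have is a valid but longer route; the simplification you are missing is that Corollary 3.16 already encapsulates the whole dichotomy of Theorem 3.12 when the base is allowed to be any smooth projective variety. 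One minor caveat: your reduction to the case where every $F_{i}$ has rank at least two is harmless for triviality and for identifying $Z$, but strictly speaking a rank-one factor contributes $q_{i}=0$, which cannot be matched to any $p_{i}\geq 1$; the theorem is implicitly read with all $q_{i}\geq 1$, as in Corollary 3.16, so this is a matter of stating the convention rather than a gap.
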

\begin{proof}
We prove by induction on $s$. For $s=1$, the statement is true by Corollary 3.16. Now assume $ s\geq 2  $. Write $ \mathbb{P}_{Z}(F_{1},F_{2},...F_{s})= \mathbb{P}_{{\mathbb{P}_{Z}}(F_{2},...,F_{s})}(\pi^{\ast}F_{1})$ where $ \pi :\mathbb{P}_{Z}(F_{2},...,F_{s})\longrightarrow Z $  is the projection. By Corollary 3.16, $ \pi^{\ast} F_{1} $  is trivial up to line bundle twist (so $F_{1}$ is trivial up to line bundle twist by lemma 3.1), and up to permutation of $ p_{i}$'s we have $p_{1}= q_{1}$, $\mathbb{P}_{Z}(F_{2},..., F_{s}) = \mathbb{P}^{p_{2}}\times ...\times \mathbb{P}^{p_{r}} $. Now by induction, we are done. 
\end{proof}

\textbf{Remark:} Theorem 3.17 follows easily from looking at the elementary contractions of the Fano variety $\mathbb{P}^{p_{1}}\times ...\times \mathbb{P}^{p_{r}}$. We give our proof mainly to illustrate how Theorem 3.12 can be used to give an alternate proof of this result.
\noindent
\end{section}

\begin{section}{Chow rings, multiprojective bundles, towers of projective bundles} 
Corollary 4.3 is an interesting partial topological analogue of the main result of \ref{9}.
Theorem 4.4 is a substantial generalization of the main result of \ref{9} in the case of distinct dimensions. Theorem 4.5 tells us when the top varieties in two height 3 projective bundle towers are isomorphic, under certain assumptions.  
\\\\
We begin with a preliminary lemma.
\begin{lemma} \label{lemma:lemma 4.1}
For positive integers $n_i$'s and $m_j$'s, we have  $A^{\ast}(\prod_{i=1}^{r} \mathbb{P}^{n_{i}})$ is isomorphic $A^{\ast}(\prod_{i=1}^{s} \mathbb{P}^{m_{i}}) $ as graded groups if and only if $ r=s$ and $\{ m_{1},...,m_{s}\}= \{ n_{1},...,n_{s}\}$ as multisets.
\end{lemma}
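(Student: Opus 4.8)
\emph{Proof proposal.} The plan is to translate the statement about graded groups into an identity of polynomials and then read off the multiset equality from the behaviour of these polynomials at roots of unity.

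First I would compute the Chow ring. Writing $\prod_{i=1}^{r}\mathbb{P}^{n_i}$ as an iterated projective bundle over a point with trivial bundles and applying convention 3(c) repeatedly, we get $A^{\ast}(\prod_{i=1}^{r}\mathbb{P}^{n_i}) \cong \mathbb{Z}[u_1,\dots,u_r]/(u_1^{n_1+1},\dots,u_r^{n_r+1})$, which as a graded abelian group is free with a basis of monomials $u_1^{e_1}\cdots u_r^{e_r}$, $0\le e_i\le n_i$, the monomial sitting in degree $\sum e_i$. Hence its rank in degree $k$ is the coefficient of $t^{k}$ in
\[
P_{\underline n}(t) \;:=\; \prod_{i=1}^{r}\bigl(1+t+\cdots+t^{n_i}\bigr) \;=\; \prod_{i=1}^{r}\frac{1-t^{n_i+1}}{1-t}.
\]
Since two graded abelian groups with finite-rank free graded pieces are isomorphic exactly when they have the same rank in every degree, the hypothesis $A^{\ast}(\prod_{i=1}^{r}\mathbb{P}^{n_i})\cong A^{\ast}(\prod_{j=1}^{s}\mathbb{P}^{m_j})$ is equivalent to the polynomial identity $P_{\underline n}(t)=P_{\underline m}(t)$. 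The ``if'' direction of the lemma is then immediate, so it remains to prove the ``only if'' direction from this identity.

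For that, I would look at the roots of these monic polynomials over $\mathbb{C}$ (equivalently, at their factorizations into cyclotomic polynomials). Because $n_i\ge 1$, the factor $1+t+\cdots+t^{n_i}=\frac{t^{n_i+1}-1}{t-1}$ has as roots precisely the $(n_i+1)$-th roots of unity other than $1$, each a simple root. Therefore, for every integer $d\ge 2$, a primitive $d$-th root of unity is a root of the $i$-th factor if and only if $d\mid n_i+1$, and then it is a simple root of that factor; so its multiplicity as a root of $P_{\underline n}$ is exactly $\#\{\,i : d\mid n_i+1\,\}$. Comparing multiplicities on both sides of $P_{\underline n}=P_{\underline m}$ gives
\[
\#\{\,i : d\mid n_i+1\,\} \;=\; \#\{\,j : d\mid m_j+1\,\} \qquad \text{for all } d\ge 2 .
\]

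Finally I would recover the multiset equality by a downward induction on the divisor $d$. Put $a_k=\#\{\,i:n_i+1=k\,\}$ and $b_k=\#\{\,j:m_j+1=k\,\}$ for $k\ge 2$; only finitely many are nonzero, and $c_k:=a_k-b_k$ satisfies $\sum_{k\,:\,d\mid k} c_k=0$ for all $d\ge 2$ by the displayed equality. Choose $N$ larger than every $k$ with $c_k\ne 0$; then $c_d=0$ for $d\ge N$ vacuously, and if $c_k=0$ for all $k>d$ then the relation for this $d$ reads $c_d+\sum_{k>d,\,d\mid k}c_k=c_d=0$. Hence all $c_k=0$, i.e.\ $\{n_i+1\}=\{m_j+1\}$ as multisets, which is the same as $\{n_1,\dots,n_r\}=\{m_1,\dots,m_s\}$ as multisets, and in particular $r=s$. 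I expect the only mildly delicate point to be this last step---reconstructing a multiset from its sequence of ``number of multiples of $d$'' counts---but the downward induction disposes of it cleanly, and everything else is bookkeeping.
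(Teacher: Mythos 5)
Your proposal is correct. The first half --- identifying the graded pieces via the iterated projective-bundle description, reducing the isomorphism of graded groups to the identity of Poincar\'e polynomials $\prod_i(1+t+\cdots+t^{n_i})=\prod_j(1+t+\cdots+t^{m_j})$ --- is exactly the paper's reduction. Where you diverge is in how you extract the multiset equality from that identity. The paper multiplies by $(1-t)^r$ (after first reading off $r=s$ from the coefficient of $t$) to get $\prod_i(1-t^{n_i+1})=\prod_j(1-t^{m_j+1})$ and then peels off the minimum of the $n_i$ by looking at the lowest-degree nonvanishing coefficient, inducting on the number of factors. You instead count multiplicities of primitive $d$-th roots of unity, obtaining $\#\{i: d\mid n_i+1\}=\#\{j: d\mid m_j+1\}$ for all $d\ge 2$, and recover the multiset by a downward induction over the divisor lattice; note that your route delivers $r=s$ as a byproduct rather than needing it up front. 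Both finishes are complete and correct: the paper's is shorter and more elementary, while yours is the more systematic ``unique factorization into cyclotomics'' argument and makes the uniqueness of the recovered multiset completely explicit. Your one flagged worry --- reconstructing a multiset from its counts of multiples --- is indeed handled cleanly by the downward induction, so there is no gap.
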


\begin{proof}
    $ (\Leftarrow)  $ Obvious.\\ 
\\
$( \Rightarrow)$ Look at the Poincare series,
$$ \sum_{\ell}\textrm {rk}\medspace A^{\ell}( \prod_{i=1}^{r}\mathbb{P}^{n_{i}}) t^{\ell}=  \prod_{i=1}^{r} (\sum_{\ell} \textrm{rk}\medspace A^{\ell}(\mathbb{P}^{n_{i}}) t^{\ell})  = \prod_{i=1}^ { r}( 1+t+t^{2}+...+t^{n_{i}}).$$
So we get, 
\begin{equation}
    \prod_{i=1}^ { r}( 1+t+t^{2}+...+t^{n_{i}})= \prod_{j=1}^ { s}( 1+t+t^{2}+...+t^{m_{j}}).
\end{equation}
Looking at the coefficient of $t $ we get $ r=s $. Multiplying (1) by $ (1-t)^r $ we get, 
      \begin{equation}
          \prod_{i=1}^{r}(1-t^{n_{i}+1}) = 
          \prod_{i=1}^{r}(1-t^{m_{i}+1}). 
      \end{equation}
We claim that (2) $ \Rightarrow \{n_{1},...,n_{r} \} = \{ m_{1},...,m_{r}\}$ as multisets. By induction it suffices to show that min $\{ n_{1},...,n_{r}\} = $ min $\{ m_{1},...,m_{r}\} $. This follows by looking at the smallest integer $ d \gtr 0$ such that the coefficient of $ t^{d}$ is nonzero.

\end{proof}

We now describe a necessary and sufficient condition on vector bundles such that the Chow rings of the corresponding projective bundles are isomorphic.
\begin{theorem}\label{theorem:theorem 4.2}
Let $m\neq n$ be natural numbers  and $E$ and $F$ be vector bundles of ranks $r+1$ and $s+1$ on $\mathbb{P}^{m}$ and $\mathbb{P}^{n}$ respectively. The rings $A^{\ast}(\mathbb{P}(E))$  and  $A^{\ast}(\mathbb{P}(F))$  are isomorphic as graded rings if and only if $r=n$, $s=m$ and there exist L $\in \mathrm{Pic}(\mathbb{P}^m) $ and M $\in \mathrm{Pic}(\mathbb{P}^n)$ such that $ c(E\otimes L)=1 $ and $ c(F\otimes M)=1 $.
\end{theorem}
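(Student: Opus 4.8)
The reverse implication is immediate: if $r=n$ and $c(E\otimes L)=1$, then $\mathbb P(E)=\mathbb P(E\otimes L)$ and, since $E\otimes L$ has trivial total Chern class, the standard presentation $A^\ast(\mathbb P(G))\cong A^\ast(X)[u]/\bigl(\sum_i(-1)^ic_i(G)u^{\operatorname{rk}G-i}\bigr)$ gives $A^\ast(\mathbb P(E))\cong\mathbb Z[h,u]/(h^{m+1},u^{n+1})$ with $h$ the pulled-back hyperplane class of $\mathbb P^m$ and $u=[\mathcal O_{\mathbb P(E\otimes L)}(1)]$; likewise $A^\ast(\mathbb P(F))\cong\mathbb Z[h',u']/(h'^{n+1},u'^{m+1})$, and $h\mapsto u'$, $u\mapsto h'$ is a graded ring isomorphism. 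For the forward implication, first note that as a graded group $A^\ast(\mathbb P(E))$ is free with the same graded rank as $A^\ast(\mathbb P^m\times\mathbb P^r)$ (basis $h^au^b$, $0\le a\le m$, $0\le b\le r$); hence the hypothesis forces $A^\ast(\mathbb P^m\times\mathbb P^r)\cong A^\ast(\mathbb P^n\times\mathbb P^s)$ as graded groups, so by Lemma~\ref{lemma:lemma 4.1} $\{m,r\}=\{n,s\}$ as multisets, and since $m\neq n$ this yields $r=n$, $s=m$. The conclusion is symmetric under interchanging $(E,m)\leftrightarrow(F,n)$, so I may assume $m<n$, hence $m+1\le n$.

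Write $A^\ast(\mathbb P(E))=\mathbb Z[h,u]/\bigl(h^{m+1},\sum_i(-1)^ia_ih^iu^{n+1-i}\bigr)$ with $c_i(E)=a_ih^i$, $a_0=1$, $a_i=0$ for $i>m$, and $A^\ast(\mathbb P(F))=\mathbb Z[h',u']/\bigl(h'^{n+1},\sum_j(-1)^jb_jh'^ju'^{m+1-j}\bigr)$ with $c_j(F)=b_jh'^j$. The key point is the ring-theoretic description $\{\,x\in A^1(\mathbb P(E)):x^{m+1}=0\,\}=\mathbb Z\,h$: expanding $(ph+qu)^{m+1}$ in the $\mathbb Z$-basis $\{h^au^b\}$, the coefficient of $u^{m+1}$ — a genuine basis element exactly because $m+1\le n$ — is $q^{m+1}$, so $x^{m+1}=0$ forces $q=0$, while $h^{m+1}=0$ gives the reverse inclusion. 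Fix a graded ring isomorphism $\varphi:A^\ast(\mathbb P(F))\xrightarrow{\sim}A^\ast(\mathbb P(E))$; restricting to $A^1$ it carries the corresponding set for $F$ onto $\mathbb Z h$, so $G:=\{x'\in A^1(\mathbb P(F)):x'^{m+1}=0\}$ is a rank-one direct summand, with generator $g=p'h'+q'u'$ satisfying $\varphi(g)=\pm h$. Expanding $g^{m+1}=0$ in the analogous basis of $A^\ast(\mathbb P(F))$ and reducing $u'^{m+1}$ by the defining relation: the coefficient of $h'^{m+1}$ is $p'^{m+1}+(-1)^mb_{m+1}q'^{m+1}$, which for $q'=0$ would force $p'=0$, contradicting $g\ne0$; so $q'\ne0$, and comparing coefficients of $h'^ju'^{m+1-j}$ for $j=1,\dots,m+1$ gives $b_j=\binom{m+1}{j}(-p'/q')^j$. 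The case $j=m+1$ shows $(-p'/q')^{m+1}\in\mathbb Z$, so $\beta:=-p'/q'\in\mathbb Z$, whence $c(F)=(1+\beta h')^{m+1}$ in $A^\ast(\mathbb P^n)$ and, by the Chern-class formula for line-bundle twists ($c(G)=(1+a h)^{\operatorname{rk}G}\Rightarrow c(G\otimes\mathcal O(-a))=1$), $c(F\otimes\mathcal O_{\mathbb P^n}(-\beta))=1$; take $M=\mathcal O_{\mathbb P^n}(-\beta)$.

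Since $\mathbb P(F)=\mathbb P(F\otimes\mathcal O(-\beta))$, I replace $F$ by $F\otimes\mathcal O(-\beta)$, so $A^\ast(\mathbb P(F))=\mathbb Z[h',u']/(h'^{n+1},u'^{m+1})$. Repeating the expansion of the previous paragraph with this trivial relation shows $\{x'\in A^1(\mathbb P(F)):x'^{m+1}=0\}=\mathbb Z\,u'$, so $\varphi(u')=\pm h$, i.e.\ $\varphi^{-1}(h)=\varepsilon u'$ with $\varepsilon=\pm1$, and since $\{h,u\}$ is a basis of $A^1(\mathbb P(E))$, necessarily $\varphi^{-1}(u)=\varepsilon'h'+d\,u'$ for some $d\in\mathbb Z$, $\varepsilon'=\pm1$. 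Applying $\varphi^{-1}$ to $\sum_i(-1)^ia_ih^iu^{n+1-i}=0$, expanding $(\varepsilon'h'+du')^{n+1-i}$ binomially, and collecting the coefficient of each surviving monomial $h'^lu'^{n+1-l}$ ($n+1-m\le l\le n$), I obtain, with $p:=n+1-l\in\{1,\dots,m\}$, the relations $\sum_{i=0}^{p}(-1)^ia_i\varepsilon^i\binom{n+1-i}{p-i}d^{\,p-i}=0$. Solving these recursively in $p$, using the identity $\binom{n+1}{i}\binom{n+1-i}{p-i}=\binom{n+1}{p}\binom{p}{i}$ and $\sum_i(-1)^i\binom{p}{i}=0$, gives $a_p=\binom{n+1}{p}(\varepsilon d)^p$ for $p=1,\dots,m$; hence $c(E)=(1+\alpha h)^{n+1}$ with $\alpha:=\varepsilon d\in\mathbb Z$, and $c(E\otimes\mathcal O_{\mathbb P^m}(-\alpha))=1$, so $L=\mathcal O_{\mathbb P^m}(-\alpha)$ works. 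The main obstacle is the distinguished-line description in the second paragraph: that the pulled-back base class $h$ is pinned down inside $A^1(\mathbb P(E))$ by the purely ring-theoretic condition $x^{m+1}=0$, which genuinely requires $m+1\le n$ (so that $u^{m+1}$ remains a basis monomial over $A^\ast(\mathbb P^m)$); for $m>n$ this fails, which is why the reduction to $m<n$ is essential. The rest is bookkeeping: two monomial expansions and the recursive solution of the resulting triangular system, the only substantive input being the binomial identity above.
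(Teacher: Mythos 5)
Your proof is correct and follows essentially the same route as the paper's: reduce to $r=n$, $s=m$ via the Poincar\'e series argument of Lemma 4.1, assume $m<n$, and then pin down $c(F)$ and $c(E)$ by expanding powers of degree-one classes in the monomial basis of the Chow ring and comparing coefficients, using that $m+1\le n$ keeps the relevant monomials independent. Your ring-theoretic characterization of $\mathbb{Z}h$ as $\{x\in A^1:\ x^{m+1}=0\}$ is a slightly cleaner packaging of the paper's ``$b\neq 0$ and $R=0$'' step, and your triangular system for the $a_p$ replaces the paper's congruence $f\equiv (Y-\tfrac{d}{b}X)^{n+1}\pmod{X^{m+1}}$, but the underlying computation is the same.
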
 
    
\begin{proof}
($\Leftarrow$) We know, $$A^{\ast}(\mathbb{P}(E))\cong A^{\ast}(\mathbb{P}(E\otimes L)) \cong \frac{\mathbb{Z}[X,Y]}{(X^{m+1},Y^{n+1})} \cong A^{\ast}(\mathbb{P}(F\otimes M))\cong A^{\ast}(\mathbb{P}(F)).$$
\vspace{3mm}
\\
\vspace{2pt}($\Rightarrow$) Let $A^{\ast}(\mathbb{P}(E)) \cong A^{\ast}(\mathbb{P}(F))$ as graded rings. As $A^{\ast}(\mathbb{P}(E)) \cong A^{\ast}(\mathbb{P}^m\times \mathbb{P}^r)$ and $A^{\ast}(\mathbb{P}(F)) \cong A^{\ast}(\mathbb{P}^n\times \mathbb{P}^s)$ as graded groups so,$\medspace$ $ A^{\ast}(\mathbb{P}^m\times \mathbb{P}^r)\cong A^{\ast}(\mathbb{P}^n\times \mathbb{P}^s)$ as graded groups. By Lemma (4.1), we get $ \{ m,r\}= \{ n,s \}$ as multisets and as $m\neq n$, we get  $r=n$  and $s=m $.

Now we will show the second part. Without loss of generality let us assume $m<n$. Let  $ \mathbb{P}(E) \xrightarrow{p} \mathbb{P}^m $ and $ \mathbb{P}(F) \xrightarrow{q} \mathbb{P}^n $  be the projectivization of $E$  and $F$ and $A^{\ast}(\mathbb{P}(E))\xrightarrow{\psi} A^{\ast}(\mathbb{P}(F))$ be an isomorphism of graded rings. We have $p^{\ast}(\mathcal{O}_{\mathbb{P}^m}(1))$ and $ \mathcal{O}_{\mathbb{P}(E)}(1)\in A^{1}(\mathbb{P}(E))$ and similarly $q^{\ast}(\mathcal{O}_{\mathbb{P}^n}(1))$ and $ \mathcal{O}_{\mathbb{P}(F)}(1)\in A^{1}(\mathbb{P}(F))$.
Let us denote $t=q^{\ast}(\mathcal{O}_{\mathbb{P}^n}(1))$, $ u= \mathcal{O}_{\mathbb{P}(F)}(1)$, $x= \psi(p^{\ast}(\mathcal{O}_{\mathbb{P}^m}(1))$, $y= \psi(\mathcal{O}_{\mathbb{P}(E)}(1))$, so $t,u,x,y\in A^{1}(\mathbb{P}(F))$.

Let $a_i,$ $b_i$ be integers such that $c_i(F)=b_i c_1(\mathcal{O}_{\mathbb{P}^n}(1))^i$, $c_i(E)=a_i c_1(\mathcal{O}_{\mathbb{P}^m}(1))^i$ for each $i$. Since $c_i(E)=0$ for $i>m$, we can choose $a_i=0$ for $i>m$. Let $f(Y,X)=\sum_{i=0}^{n+1}a_iY^{n+1-i} (-X)^i$ and $g(U,T)=\sum_{i=0}^{m+1}b_iU^{m+1-i} (-T)^i$. So, $f(Y,X)\in \mathbb{Z}[Y,X]$, $g(U,T)\in \mathbb{Z}[U,T]$ are homogeneous polynomials of degree $n+1$, $m+1$ respectively, such that $Y^{n+1}$ and $U^{m+1}$ have coefficients $1$ in $f,g$ respectively, and $q^*c(F)=g(1,-t),$ $\psi (p^*c(E))=f(1,-x).$  Also, by the formula of Chow ring of projective bundle as in point 3. (c)
 of notations and conventions section, $$ A^{\ast}(\mathbb{P}(F))\cong \frac{\mathbb{Z}[T,U]}{(T^{n+1},g(U,T))} $$ via $$ t \longmapsfrom T $$ 
$$ u\longmapsfrom U$$ and $$ A^{\ast}(\mathbb{P}(F))\cong \frac{\mathbb{Z}[X,Y]}{(X^{m+1},f(Y,X))} $$ via $$ x \longmapsfrom X $$ 
$$ y\longmapsfrom Y.$$ Here the last isomorphism is obtained by first applying the formula of Chow ring of projective bundle to get $A^\ast(\mathbb{P}(E))$ and then using the isomorphism $\psi$. 

It is clear that  $\{t,u\}$ and $\{x,y\}$ are both basis of  $A^{1}(\mathbb{P}(F))$. So there is a matrix $A= \begin{pmatrix}
    a & b \\
    c & d
\end{pmatrix}
\in\mathrm{Gl_{2}}(\mathbb{Z})$ such that
\begin{gather}
\begin{pmatrix}
    x \\
    y
\end{pmatrix}
=
A
\begin{pmatrix}
    t \\
    u
\end{pmatrix}.
\end{gather}
 If $b =0$ then $a=\pm 1$ so $x=\pm t$, hence $ t^{m+1}=0$, but this is false as $t^{j+1}\neq 0$ for all $j \less n$. So $b$ must be nonzero.
 
Since coefficient of $U^{m+1}$ in $g$ is $1$, we can write, 
\begin{equation}
 (b U+ a T)^{m+1}= b^{m+1} g(U,T) + R(U,T)
\end{equation}
where $R(U,T)$ is a linear combination of $T U^m$, $T^2 U^{m-1}$ ,..., $T^{m+1}$. We have $x^{m+1}=0$, so $(b u +a t)^{m+1}=0$. Also $g(u,t)=0$, so we get $R(u,t)=0$. But the set \{$t u^m$, $t^2 u^{m-1}$,...,$t^{m+1}$\} is linearly independent in $A^{\ast}(\mathbb{P}(F))$ as  $m \less n$. This shows that $R(U, T)=0$. From (4) we have $ g(U,T)= (U + \frac{a}{b}T)^{m+1}$ so $\frac{a}{b}\in \mathbb{Z}$. (This forces $b=\pm 1$, since gcd$(a,b)=1$ for $A\in GL_2(\mathbb{Z})$). We also get $c(F \otimes \mathcal{O}_{\mathbb{P}^n}(\frac{a}{b}))= 1$, by the fact in point 4 of notations and conventions section. So, letting $M= \mathcal{O}_{\mathbb{P}^n}(\frac{a}{b}) $ we have $c(F \otimes M)=1 $.

We have, \begin{gather}
\begin{pmatrix}
    x \\
    y
\end{pmatrix}
=
A
\begin{pmatrix}
    t \\
    u
\end{pmatrix}
\end{gather} so

\begin{gather}
\begin{pmatrix}
    t \\
    u
\end{pmatrix}
=
A^{-1}
\begin{pmatrix}
    x \\
    y
\end{pmatrix}.
\end{gather}
Clearly,

\begin{gather*}
A^{-1}
=
\pm {\begin{pmatrix}
    d & -b \\
    -c & a
\end{pmatrix}}
\end{gather*} 
We can write, $$ (Y-\frac{d}{b}X )^{n+1} = f(Y,X)+ R_{1}(Y,X)  $$ where $R_{1}(Y,X) $ a linear combination of $Y^{n}X$, $Y^{n-1} X^{2}$,..., $ X^{n+1}$. Since we have $ (y-\frac{d}{b}x )^{n+1}=\pm t^{n+1}=0 $, $ f(y,x)=0 $ then we must have $R_{1}(y,x)=0$. So $X^{m+1} \mid R_{1}(Y,X) $ as \{$y^{n}x$, $y^{n-1} x^{2}$,..., $ y^{n-m+1}x^{m}$\} is linearly independent in $A^{\ast}(\mathbb{P}(F))$. So $ f(Y,X)\equiv(Y-\frac{d}{b}X )^{n+1}$ (mod $X^{m+1}$). Since we have chosen $a_i=0$ for $i>m$ we obtain $ f(Y,X)=(Y-\frac{d}{b}X )^{n+1}$. It follows that $c(E \otimes \mathcal{O}_{\mathbb{P}^m}(\frac{-d}{b}))= 1$. If we let $L= \mathcal{O}_{\mathbb{P}^m}(\frac{-d}{b}) $ then $c(E \otimes L)=1 $.
\end{proof}
 \noindent
$\textbf{Remark:}$ The observation that $b=\pm 1$, which we made in the proof above, gives a proof of Lemma 3.7.$(i)$. This is essentially the idea of the proof in \ref{9}.
We next state a topological application of the Theorem. This is the (partial) topological analogue of Lemma 3.7.$(i)$. 
\begin{corollary}
Let $E,F $ be topological vector bundles of rank $ r+1, s+1$ over $ \mathbb{P}^m, \mathbb{P}^n $ respectively, with $ m \less n$ . If $ \mathbb{P}(E), \mathbb{P}(F)$ are homotopy equivalent, then:
\begin{enumerate}
\item[(i)]{ $ r=n , s=m $.}
\item[(ii)] {$ E $ is trivial up to line bundle twist.}
\item[(iii)]{ $ F \bigoplus \mathcal{O}_{\mathbb{P}^{n}}^ {n-m-1}$ is trivial up to line bundle twist, so $F$ is stably trivial. }
\end{enumerate} 
Moreover if $ n\leq 3 $, then F is trivial up to line bundle twist.
\end{corollary}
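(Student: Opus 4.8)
The plan is to transport the algebraic core of the proof of Theorem \ref{theorem:theorem 4.2} to singular cohomology, and then feed the resulting vanishing-Chern-class statements into standard facts about complex vector bundles on projective space. First, a homotopy equivalence $\mathbb{P}(E)\simeq\mathbb{P}(F)$ induces an isomorphism $H^{\ast}(\mathbb{P}(E);\mathbb{Z})\cong H^{\ast}(\mathbb{P}(F);\mathbb{Z})$ of graded rings. For a topological complex vector bundle $E$ of rank $r+1$ on $\mathbb{P}^m$, the Leray--Hirsch theorem together with the Grothendieck relation gives $H^{\ast}(\mathbb{P}(E);\mathbb{Z})\cong H^{\ast}(\mathbb{P}^m;\mathbb{Z})[u]/\big(\textstyle\sum_{i=0}^{r+1}(-1)^i c_i(E)\,u^{r+1-i}\big)$, with $c_i(E)$ the topological Chern classes and $u=c_1(\mathcal{O}_{\mathbb{P}(E)}(1))$ --- formally identical to point 3.(c) of the conventions. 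Since the proof of Theorem \ref{theorem:theorem 4.2} only manipulates such presentations (choosing $\mathbb{Z}$-bases of the degree-$2$ parts, producing the matrix $A\in\mathrm{GL}_2(\mathbb{Z})$, running the divisibility argument that forces $b=\pm1$, invoking the ``Fact'' of point 4 and Lemma \ref{lemma:lemma 4.1}), it carries over with only notational changes, $A^{\ast}\rightsquigarrow H^{\ast}$ and $\mathrm{Pic}\rightsquigarrow H^2(-;\mathbb{Z})$ (topological line bundles being classified by $H^2$). This yields (i), namely $r=n$ and $s=m$, together with topological line bundles $L$ on $\mathbb{P}^m$ and $M$ on $\mathbb{P}^n$ satisfying $c(E\otimes L)=1$ and $c(F\otimes M)=1$.

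The next step upgrades ``all Chern classes vanish'' to ``stably trivial''. Over $\mathbb{P}^k$ the reduced group $\widetilde K^{0}(\mathbb{P}^k)$ is free abelian and the Chern character embeds it into $\widetilde H^{\mathrm{even}}(\mathbb{P}^k;\mathbb{Q})$; since $\mathrm{ch}$ is a universal polynomial in the Chern classes, a bundle with trivial total Chern class has trivial reduced $K$-class, hence is stably trivial. Thus $E\otimes L$ and $F\otimes M$ are stably trivial. Now I use the complex stable range: $BU(\rho)\to BU$ is $(2\rho+1)$-connected, so a stably trivial complex bundle of rank $\rho$ over a complex of real dimension $\le 2\rho$ is trivial. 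For (ii): $E\otimes L$ has rank $n+1\ge m+2>m$ over $\mathbb{P}^m$ (real dimension $2m$), so it is trivial and $E$ is trivial up to line bundle twist. For (iii): $F\otimes M$ has rank $m+1$ over $\mathbb{P}^n$; adding $n-m-1$ (which is $\ge 0$ since $m<n$) trivial line bundles raises the rank to $n$, putting us in the stable range for $\mathbb{P}^n$, whence $(F\otimes M)\oplus\mathcal{O}_{\mathbb{P}^{n}}^{n-m-1}$ is trivial. In particular $F$ is stably trivial, which is (iii).

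Finally, for the ``moreover'': with $m<n\le 3$ the only possibilities are $(n,m)\in\{(2,1),(3,2),(3,1)\}$. In the first two cases $n-m-1=0$, so (iii) already reads ``$F$ is trivial up to line bundle twist''. In the remaining case $(n,m)=(3,1)$, part (iii) says $F\otimes M$ is a stably trivial rank $2$ complex bundle on $\mathbb{P}^3$; by the Atiyah--Rees classification of topological rank $2$ bundles on $\mathbb{P}^3$ (those with even first Chern class are detected by their Chern classes), $F\otimes M$ is trivial, so $F$ is trivial up to line bundle twist. The genuinely topological --- and only nonformal --- ingredients are the passage from Chern classes to stable triviality ($K$-theory of $\mathbb{P}^k$), the complex stable range/cancellation bound, and the Atiyah--Rees theorem in the $\mathbb{P}^3$ case; the rest is the algebra of Theorem \ref{theorem:theorem 4.2} reused verbatim, so I expect no real obstacle beyond verifying the rank numerology ($n+1>m$ on $\mathbb{P}^m$ and $(m+1)+(n-m-1)=n$ on $\mathbb{P}^n$).
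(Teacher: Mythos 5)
Your treatment of (i)--(iii) is correct and is essentially the paper's argument: transport the presentation of Theorem \ref{theorem:theorem 4.2} to $H^{\ast}$, conclude $r=n$, $s=m$ and $c(E\otimes L)=c(F\otimes M)=1$, then use the stable range to trivialize $E\otimes L$ (rank $n+1>m$ over $\mathbb{P}^m$) and $(F\otimes M)\oplus\mathcal{O}_{\mathbb{P}^n}^{n-m-1}$ (rank $n$ over $\mathbb{P}^n$). Your route from vanishing Chern classes to stable triviality via the Chern character on the torsion-free group $\widetilde K^0(\mathbb{P}^k)$ is a clean substitute for the paper's citation of Okonek--Schneider--Spindler and is fine.

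The gap is in the ``moreover'' for $(n,m)=(3,1)$. You have the Atiyah--Rees theorem backwards: rank $2$ topological bundles on $\mathbb{P}^3$ with \emph{odd} $c_1$ are determined by $(c_1,c_2)$, whereas for \emph{even} $c_1$ there is an additional $\alpha$-invariant in $\mathbb{Z}/2$, so there are exactly two bundles with $c_1=c_2=0$: the trivial one and a nontrivial one. Hence vanishing of the Chern classes of $F\otimes M$ does \emph{not} force triviality, and your argument stops one step short precisely at the only case where something nonformal is needed. The paper closes this by observing that if $F\otimes M$ were the nontrivial bundle, then by a computation of Kuroki--Suh one has $\pi_6(\mathbb{P}(F))\ncong\pi_6(\mathbb{P}^3\times\mathbb{P}^1)=\pi_6(\mathbb{P}(E))$, contradicting the assumed homotopy equivalence. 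Some such extra input (a homotopy or characteristic-class invariant of the projectivization distinguishing the two bundles) is genuinely required; citing the classification of the bundles themselves cannot do it.
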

 \begin{proof}
Since   $ \mathbb{P}(E)$ and $\mathbb{P}(F)$ are homotopy equivalent so we have cohomology rings $ H^{\ast}(\mathbb{P}(E))\cong H^{\ast}(\mathbb{P}(F)) $. The cohomology rings have an analogous description to the description of Chow rings. The notations and proof of Theorem 4.2 carry over to the topological setting with the Chow ring replaced by the cohomology ring. So we get $ r=n , m=s $ and we can twist $ E, F$ by line bundle to assume that $ c(E)= 1, c(F)=1 $. Since $ r+1= n+1 \gtr m$ so by [\ref{6}, Chapter 1, Section 6]  $ E, F \bigoplus \mathcal{O}_{\mathbb{P}^{n}}^{n-m-1}$ are trivial bundles. 

 If $ m=1, n=3 $ then again by [\ref{6}, Chapter 1, Section 6] either $F$ is trivial or $F$ is the unique nontrivial rank 2 vector bundle over $ \mathbb{P}^3 $ with $ c(F)=1 $. Now by [\ref{5}, Proposition 5.4] we have  $$ \pi_{6} ( \mathbb{P}(F)) \ncong \pi_{6}( \mathbb{P}^3 \times \mathbb{P}^1) = \pi_{6}(\mathbb{P}(E)) .$$ So $ \mathbb{P}(E), \mathbb{P}(F) $  are not  homotopy equivalent, a contradiction, so $F$ must be trivial.

 \end{proof}

The authors do not know whether one can show that $F$ is also trivial in the above corollary, for any $m<n$.

Now our goal is to generalize the main result of \ref{9} in the case of distinct dimensions (that is, Lemma 3.7(i)), by replacing a single projective bundle with any finite fibre product of projective bundles. \\

The result is the following.

\begin{theorem} \label{theorem:theorem 4.4} 
Let $m\neq n$ be positive integers, $E_{1},...,E_{r}$ be vector bundles on $\mathbb{P}^m$ and $F_{1},...,F_{s} $  vector bundles on $ \mathbb{P}^n$ of rank at least two. Let $p_{i}, q_{j}$ be the positive integers such that $\mathrm{rk}(E_{i})= p_{i}+ 1 $,  $\mathrm{rk}(F_{j})= q_{j}+ 1 $ for all $i, j$. Then the following are equivalent:
\begin{enumerate}
\item[(i)] $\mathbb{P}_{\mathbb{P}^m}(E_{1},...,E_{r}) \cong 
\mathbb{P}_{\mathbb{P}^n}(F_{1},..., F_{s})$;
\item[(ii)] $r=s,  \{ m, p_{1},...,p_{r} \}= \{ n, q_{1},...,q_{r} \} $ as multisets and all $ E_{i}$ and $F_{j} $ are trivial bundles up to line bundle twists. 
\end{enumerate}  
\end{theorem}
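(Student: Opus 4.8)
The plan is to handle $(ii)\Rightarrow(i)$ directly and $(i)\Rightarrow(ii)$ by induction on $r$, using Lemma \ref{lemma:lemma 3.7}$(i)$ as the base case and the product-structure results of Section 3 for the inductive step. For $(ii)\Rightarrow(i)$: twisting each $E_i$ and $F_j$ by a line bundle leaves the projectivizations unchanged, so both sides become products of projective spaces $\mathbb{P}^m\times\prod_i\mathbb{P}^{p_i}$ and $\mathbb{P}^n\times\prod_j\mathbb{P}^{q_j}$, which are isomorphic since $\{m,p_1,\dots,p_r\}=\{n,q_1,\dots,q_r\}$ as multisets. For $(i)\Rightarrow(ii)$: iterating the Chow-ring formula of point 3(c) shows that, as graded abelian groups, $A^\ast(\mathbb{P}_{\mathbb{P}^m}(E_1,\dots,E_r))\cong A^\ast(\mathbb{P}^m\times\prod_i\mathbb{P}^{p_i})$, and similarly on the other side; so these two graded groups are isomorphic, the coefficient of $t$ in the Poincar\'e series forces $r=s$, and the argument in the proof of Lemma \ref{lemma:lemma 4.1} then gives $\{m,p_1,\dots,p_r\}=\{n,q_1,\dots,q_r\}$ as multisets. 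It remains to prove all $E_i,F_j$ are trivial up to a line bundle twist; assume $m<n$.

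We induct on $r$, the case $r=1$ being Lemma \ref{lemma:lemma 3.7}$(i)$. For $r\ge 2$, the multiset equality lets us arrange $p_r=n$, $q_r=m$, and $\{p_1,\dots,p_{r-1}\}=\{q_1,\dots,q_{r-1}\}$. Regard $P:=\mathbb{P}_{\mathbb{P}^m}(E_1,\dots,E_r)=\mathbb{P}_G(\pi^\ast E_r)$ as a $\mathbb{P}^n$-bundle over $G:=\mathbb{P}_{\mathbb{P}^m}(E_1,\dots,E_{r-1})$ via a map $f$, and $Q:=\mathbb{P}_{\mathbb{P}^n}(F_1,\dots,F_r)=\mathbb{P}_{G'}(\rho^\ast F_r)$ as a $\mathbb{P}^m$-bundle over $G':=\mathbb{P}_{\mathbb{P}^n}(F_1,\dots,F_{r-1})$ via $g$; let $\pi_0,\rho_0$ be the projections to $\mathbb{P}^m,\mathbb{P}^n$. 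Pull $g$ and $\rho_0$ back to $P$ along the isomorphism, so $P$ carries $f$ (fibres $\mathbb{P}^n$) and $g$ (fibres $\mathbb{P}^m$). Since $\{p_i\}_{i<r}=\{q_j\}_{j<r}$ forces $\dim G=\dim G'-(n-m)<\dim G'$, the surjection $g$ cannot factor through $f$, so $g|_{D_z}$ is nonconstant for a general fibre $D_z\cong\mathbb{P}^n$ of $f$; and provided $n$ does not repeat among $p_1,\dots,p_r$, the composite $\rho_0|_{D_z}\colon\mathbb{P}^n\to\mathbb{P}^n$ is then nonconstant too (a morphism $\mathbb{P}^n\to\mathbb{P}^k$ with $k<n$ is constant, since $k+1$ hypersurfaces in $\mathbb{P}^n$ always meet), hence surjective by Lemma 3.3. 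Analyzing this surjection against the two bundle structures on $P$ (comparing the relevant $\mathcal{O}(1)$-classes), one extracts a product decomposition $Q=\mathbb{P}_{\mathbb{P}^n}(F_1,\dots,F_r)\cong\mathbb{P}^n\times W$ with $W$ smooth projective (equivalently $P\cong\mathbb{P}^n\times W$). The remaining configuration, where $n$ occurs with multiplicity $\ge 2$ among the $p_i$ (equivalently, where the above analysis puts us in the case $\rho_0=\beta\circ f$ for some $\beta\colon G\to\mathbb{P}^n$), requires separate, more careful treatment — for instance by grouping together all the $\mathbb{P}^n$-factors before peeling — and recursing on the surviving multiset $\{p_1,\dots,p_{r-1}\}=\{q_1,\dots,q_{r-1}\}$.

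Given $Q\cong\mathbb{P}^n\times W$, Corollary \ref{lemma:lemma 3.14} yields one of two outcomes: either all $F_j$ are trivial up to twist and $W\cong\prod_j\mathbb{P}^{q_j}$, whence $P\cong\mathbb{P}^n\times\prod_j\mathbb{P}^{q_j}$ and Theorem \ref{theorem:theorem 3.17} forces all $E_i$ trivial up to twist, completing the induction; or some $F_{j_0}\cong\mathcal{O}_{\mathbb{P}^n}^{\,n+1}\otimes L$ with $W\cong\mathbb{P}_{\mathbb{P}^n}(\{F_j\}_{j\ne j_0})$, so that $\mathbb{P}_{\mathbb{P}^m}(E_1,\dots,E_r)\cong\mathbb{P}_{\mathbb{P}^n}(\{F_j\}_{j\ne j_0})\times\mathbb{P}^n$ is a multiprojective bundle over $\mathbb{P}^n$ with one factor fewer, and relabelling together with the induction hypothesis closes this branch. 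At each step, once a bundle on an iterated base is known trivial up to twist, Lemma 3.1 transfers the triviality to the corresponding $E_i$ or $F_j$ on $\mathbb{P}^m$ or $\mathbb{P}^n$. I expect the main obstacle to be the technical heart of the second paragraph: producing the product decomposition from the pair of fibrations, and cleanly disposing of the degenerate case where $n$ (resp. $m$) is repeated among the $p_i$ (resp. $q_j$) — here the freedom in choosing which factor to peel off, and the persistence of the multiset identity under deletion of matched pairs, are what make the induction go through.
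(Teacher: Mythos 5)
Your skeleton agrees with the paper's: $(ii)\Rightarrow(i)$ is immediate, the graded-group comparison via Lemma \ref{lemma:lemma 4.1} gives $r=s$ and the multiset identity, and the triviality statement is proved by induction on $r$ with Lemma \ref{lemma:lemma 3.7}$(i)$ as base case and Corollary \ref{lemma:lemma 3.14} plus Theorem \ref{theorem:theorem 3.17} closing the induction once a product decomposition of the common variety is in hand. The genuine gap is that the product decomposition itself --- the technical heart of the proof --- is asserted rather than proved: ``one extracts a product decomposition'' and ``requires separate, more careful treatment'' is exactly where all the work lies, and the configuration you flag as needing special care (a repeated dimension among the $p_i$) is not a degenerate afterthought but a case your two-fibrations sketch cannot currently handle. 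What the paper actually does there is algebraic, not geometric: writing $X=\mathbb{P}_{\mathbb{P}^n}(F_1,\dots,F_r)$ with the indices arranged so that $q_r=m$ (note: the paper splits off a $\mathbb{P}^m$-factor, the \emph{smaller} dimension, not $\mathbb{P}^n$), it expands $x=\psi(\mathcal{O}_{\mathbb{P}^m}(1))=at+\sum_i b_iu_i$ in the basis of $A^1(X)$, gets $b_r\ge 0$ from global generation on fibres, proves that some $b_i$ with $q_i=m$ is nonzero by restricting to fibres and invoking Theorem \ref{theorem:theorem 3.17} (Claim 1), and then, via explicit polynomial identities in the presentation $A^{\ast}(X)\cong\mathbb{Z}[T,U_1,\dots,U_r]/(T^{n+1},g_1,\dots,g_r)$ --- setting $T=0$, comparing coefficients of $U_iU_r^m$, and using $\gcd(a,b_r)=1$ --- forces $b_i=0$ for $i<r$ and $b_r=1$ (Claim 2). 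Only then does the fibrewise argument (Lemma 3.2) yield $X\cong\mathbb{P}^m\times\mathbb{P}(F_1,\dots,F_{r-1})$ and, via the induced PGL-bundle isomorphism, the triviality of $F_r$ up to twist. Nothing in your outline replaces this computation.

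There is also a bookkeeping error in your final paragraph. In the branch where some $F_{j_0}\cong\mathcal{O}_{\mathbb{P}^n}^{\,n+1}\otimes L$, the resulting isomorphism $\mathbb{P}_{\mathbb{P}^m}(E_1,\dots,E_r)\cong\mathbb{P}^n\times\mathbb{P}_{\mathbb{P}^n}(\{F_j\}_{j\ne j_0})$ is still an isomorphism between an $r$-factor multiprojective bundle over $\mathbb{P}^m$ and an $r$-factor one over $\mathbb{P}^n$ (the split $\mathbb{P}^n$ counts as a factor), so the induction hypothesis for $r-1$ does not apply and the step is circular as written. The paper avoids this by applying Corollary \ref{lemma:lemma 3.14} on the $E$-side to the isomorphism $\mathbb{P}_{\mathbb{P}^m}(E_1,\dots,E_r)\cong\mathbb{P}^m\times\mathbb{P}_{\mathbb{P}^n}(F_1,\dots,F_{r-1})$: there the analogous branch produces $\mathbb{P}_{\mathbb{P}^m}(E_2,\dots,E_r)\cong\mathbb{P}_{\mathbb{P}^n}(F_1,\dots,F_{r-1})$, genuinely $(r-1)$ factors on both sides, to which induction applies.
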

\begin{proof}
$(ii)\Rightarrow (i)$ It is clear that $$\mathbb{P}_{\mathbb{P}^m}(E_{1},...,E_{r}) \cong \medspace\mathbb{P}^m \times \mathbb{P}^{p_{1}}\times ....\times \mathbb{P}^{p_{r}} \cong \mathbb{P}_{\mathbb{P}^n}(F_{1},...,F_{r})  $$ 
$(i)\Rightarrow (ii) $ We have $$ A^{\ast}(\mathbb{P}_{\mathbb{P}^{m}}(E_{1},...,E_{r}))\cong A^{\ast}( \mathbb{P}^{m}\times\prod_{i=1}^{r}\mathbb{P}^{p_{i}})$$ and $$A^{\ast}(\mathbb{P}_{\mathbb{P}^{n}}(F_{1},...,F_{s}))\cong A^{\ast}(\mathbb{P}^{n}\times\prod_{i=1}^{s}\mathbb{P}^{q_{i}} ) $$
as graded groups. By Lemma \ref{lemma:lemma 4.1}  we get $r=s$ and $ \{ m, p_{1},...,p_{r} \}= \{ n, q_{1},...,q_{r} \}$ as multisets.
\noindent
We now prove the remaining part of the theorem by induction on $r$. For $r=1 $ it is true by Lemma 3.7(i). Now suppose $r\gtr 1 $. Without loss of generality assume that $m<n,$ $ p_r=n,$ $p_{1}=p_{2}=...=p_{\ell-1}=m$, $ p_{i}\neq m $ for all $i\geq\ell  $, $q_{r-\ell+1}=...=q_{r}=m$, $ q_{i}\neq m$ for all $i\leq r-\ell$. Here $ 1\leq \ell \leq r$ is an integer.

Let $ X= \mathbb{P}(F_{1},...,F_{r})$ and $ \phi: \mathbb{P}(E_{1},..,E_{r}) \longrightarrow \mathbb{P}(F_{1},...,F_{r}) $ be an isomorphism. Let $ x= \mathcal{O}_{\mathbb{P}^m}(1),$ $ t =  \mathcal{O}_{\mathbb{P}^n}(1),$ $ u_{i}=\mathcal{O}_{\mathbb{P}(F_{i})}(1) $ all considered as elements of  Pic$(X) = A^{1}(X) $ via pullback by natural projections and $ \phi $.

As in the proof of Theorem \ref{theorem:theorem 4.2}, there are $ g_{i}(U_i,T)\in \mathbb{Z}[U_{i},T] $ homogenous polynomials of degree $ q_{i}+1$ with $  U^{q_{i}+1}$ having coefficient 1, such that $g_i(1,-t)=c(F_i)$ (considered as an element of $A^{\ast}(X)$ via pullback by natural projection). Also, considering $X$ as a projective bundle tower and applying the formula of Chow ring of projective bundle repeatedly, we get $$ A^{\ast}(X) \cong \frac{\mathbb{Z}[T, U_{1},...,U_{r} ]}{( T^{n+1}, g_{1}(U_{1},T),...,g_{r}(U_{r},T))}$$ via $$ t \longmapsfrom T $$ 
$$ u_{i}\longmapsfrom U_{i}$$ 

Since $\{t,u_1,u_2,...,u_r\}$ is a basis of the free abelian group $A^1(X)$, there are unique integers $a,b_{1},b_{2},...,b_{r}  $ such that $$ x= at+ \sum_{i=1}^{r} b_{i}u_{i}.$$\\ Let $X=\mathbb{P}(F_{1},...,F_{r})$ and $ \pi: X \xlongrightarrow{\phi^{-1}}\mathbb{P}(E_{1},...,E_{r}) \longrightarrow \mathbb{P}^m$ be the composition. Let $ X\xlongrightarrow{\pi_{1}}\mathbb{P}(F_{1},...,F_{r-1}) $ be the projection. For $z\in \mathbb{P}(F_{1},...,F_{r-1})$, under the map $\pi_{1}^{-1}(z) (\cong \mathbb{P}^m)\xlongrightarrow{\pi} \mathbb{P}^{m}$ the line bundle $\mathcal{O}_{\mathbb{P}^{m}}(1)$ pulls back to $\mathcal{O}_{\pi_1^{-1}(z)}(b_{r})$. So, $\mathcal{O}_{\pi_1^{-1}(z)}(b_{r})$ is globally generated. This implies $b_{r}\geq 0$. If $b_{r}= 0$, then $\pi$ factors as $ X\longrightarrow \mathbb{P}(F_{1},...,F_{r-1}) \xlongrightarrow{\pi'} \mathbb{P}^m $, where the first map is the natural projection. \\\\
\noindent
{{\textbf{Claim 1}}}: There exists $i$ with $ r-\ell+1 \leq i\leq r $ such that $b_{i}\neq 0$.
\begin{proof}
Suppose $b_{i}=0 $ for all $r-\ell+1 \leq i\leq r $. We have just shown $\pi$ factors as $ X\longrightarrow \mathbb{P}(F_{1},...,F_{r-1}) \xlongrightarrow{\pi'} \mathbb{P}^m $, where the first map is the natural projection. Using the same argument repeatedly (using induction to be more precise), we get:
 there is $\tau :\mathbb{P}(F_{1},...,F_{r-\ell})\longrightarrow \mathbb{P}^m $, such that $\pi$ factors as $ X\longrightarrow \mathbb{P}(F_{1},...,F_{r-\ell}) \xlongrightarrow{\tau} \mathbb{P}^m $, where the first map is the natural projection. For $w\in \mathbb{P}^m$, we have  $ \pi^{-1}(w)\cong \prod_{i=1}^{r} \mathbb{P}^{p_{i}} $  and  $ \pi^{-1}(w)\cong \mathbb{P}_{\tau^{-1}(w)}( q_1^{\ast}F_{r-\ell+1}\mid_{\tau^{-1}(w)},...,q_1^{\ast}F_{r}\mid_{\tau^{-1}(w)})$, where $q_1:\mathbb{P}(F_{1},...,F_{r-\ell})\to \mathbb{P}^n$ is the projection. By Theorem \ref{theorem:theorem 3.17} we get $ p_{i}=m $  for at least $\ell$ many $i$'s,  which is a contradiction.   
\end{proof}

Without loss of generality assume that $b_{r}\neq 0.$ Since $b_r\geq0$, we get $b_r>0.$\\ \\
{\textbf{Claim 2:} $b_{i} = 0$  for all $i<r $ and $b_{r} = 1$. 
\begin{proof}
Let $\underline{U}$ denotes the vector $ (U_1,...,U_{r})$. Write $$(aT+\sum_{i = 1}^{r}b_{i}U_{i})^{m+1} = \sum_{i = 1}^{r} Q_{i}(\underline{U}, T)g_{i}(U_{i},T)+ R(\underline{U},T) $$ where $Q_{i}g_i, R\in \mathbb{Z}[\underline{U},T]$ are homogeneous polynomials of degree $m+1$ and deg$_{U_{i}}R\leq q_{i}$ for all $i$. Since $R$ is homogeneous of degree $m+1\leq n$ we have deg$_{T}R\leq n$.

We have $(at+\sum_{i = 1}^{r} b_{i}u_{i})^{m+1} = x^{m+1} = 0$ and $g_{i}(u_{i}, t) = 0$ for all $i$. So we get $R(\underline{u},t) = 0$. 
 Since $\{t^{\alpha }{u_{1}^{\alpha_{1}}...}u_{r}^{\alpha_{r}}| \medspace 0\leq \alpha\leq n \,\ {and} \,\ \medspace 0\leq\alpha_{i}\leq q_{i}\}$ is linearly independent, we get $R = 0$.
 
So, 
\begin{equation*}
(aT + \sum_{i = 1}^{r}b_{i}U_{i})^{m+1} = \sum_{i =1}^{r}Q_{i}(\underline{U} , T )g_i(\underline{U}, T). 
\end{equation*}
Putting $T = 0$ in the above equation we get,
\begin{equation*}
(\sum_{i = 1}^{r}b_{i}U_{i})^{m+1} = \sum_{i = 1}^{r}Q_{i}(\underline{U}, 0)U_{i}^{q_{i}+1}    
\end{equation*}

For $i<r$, if $b_{i} \neq 0$ then the coefficient of $U_{i}U_{r}^{m}$ in the left-hand side of the above equation is non-zero (as $b_{r}\neq 0$),  but the coefficient of $U_{i}U_{r}^{m}$ in the right-hand side of above equation is 0 as $q_{i}\geq 1$ for all $i$ and $q_{r} = m$. So $b_{i} = 0$ for all $i<r $.

Since $x$ is part of a basis of $A^{1}(X)$, we get gcd$(a , b_{r}) = 1$.
Since $q_r=m$ we can write
\begin{equation*}
(b_{r}U_{r} + aT)^{m+1} = b_{r}^{m+1}g_{r}(U_{r},T) + R_{1}(U_{r}, T)
\end{equation*}
\noindent
where deg$_{U_{r}}R_{1}\leq m$. Clearly deg$_{T}R_{1} \leq n$ and $R_{1}(u_{r}, t) = 0$, this forces $R_{1} = 0$.

So, 
\begin{equation*}
(b_{r}U_{r}+aT)^{m+1} = b_{r}^{m+1}g_{r}(U_{r},T)   
\end{equation*}
$\Rightarrow$
\begin{equation*}
(U_{r} + \frac{a}{b_{r}}T)^{m+1} = g_{r}(U_{r},T)   
\end{equation*}
Since the above polynomial is in $\mathbb{Z}[U_{r},T]$, so $(\frac{a}{b_{r}})^{m+1}$, which is the coefficient of $T^{m+1}$ in $g_{r}(U_{r}, T)$, belongs to $\mathbb{Z}$.
So, $b_{r}\mid a$ and since gcd$(a, b_{r}) = 1$ we get $b_{r} = \pm1$. Since $b_r>0$ we get $b_r=1.$
\end{proof}
\noindent

So, $\pi_{1}^{-1}(z)\xlongrightarrow{\pi}\mathbb{P}^{m}$ is an isomorphism. This implies that the map $\Phi=(\pi, \pi_1): X\longrightarrow\mathbb{P}^{m}\times\mathbb{P}(F_{1},...,F_{r-1})$ has all fibres Spec $ k$, so $\Phi$ is an isomorphism. Since $\Phi$ is a map over $\mathbb{P}(F_{1},...,F_{r-1})$, so $\Phi: \mathbb{P}_{\mathbb{P}(F_{1},...,F_{r-1})}(q^*F_r)\longrightarrow\mathbb{P}_{\mathbb{P}(F_{1},...,F_{r-1})}(\mathcal{O}^{m+1})$ is an isomorphism of PGL$_{m+1}$-bundles, where $q: \mathbb{P}(F_{1},...,F_{r-1})\to \mathbb{P}^n$ is the natural projection. So $q^* F_r,$ hence $F_r$, is trivial up to line bundle twist.

So we have $ \mathbb{P}_{\mathbb{P}^m}(E_{1},...,E_{r})\cong \mathbb{P}^m \times   \mathbb{P}_{\mathbb{P}^n}(F_{1},...,F_{r-1}).$ By Corollary \ref{lemma:lemma 3.14}, we have  one of the two cases:\\\\
{\textbf{Case 1:}} Let $E_{1}\cong \mathcal{O}_{\mathbb{P}^m}^{m+1}\bigotimes L$ ( up to a permutation of $E_{1},E_{2},...,E_{r})$  and  $ \mathbb{P}_{\mathbb{P}^m}(E_{2},...,E_{r})$ is isomorphic to $\mathbb{P}_{\mathbb{P}^m}(F_{1},...,F_{r-1})$. By induction $E_{2},...,E_{r}$, $F_{1},...,F_{r-1}$ are trivial up to line bundle twists. So all $ E_{i}$'s, $F_{j}$'s are trivial up to line bundle twists.\\\\
\textbf{Case 2:} Let all $E_{i}$'s be trivial up to line bundle twists, $\mathbb{P}_{\mathbb{P}^n}(F_{1},...,F_{r-1})$ is isomorphic to $\mathbb{P}^{p_{1}}\times ...\times \mathbb{P}^{p_{r}}$. By Theorem \ref{theorem:theorem 3.17}, $F_{1},...,F_{r-1}$ are trivial upto line bundle twists. So, all $E_{i}$'s and $F_{j}$'s are trivial up to line bundle twists. This completes the proof using induction.}

\end{proof}

Now consider the following setup. let $m\neq n$ be natural numbers and $E_{1}$ and $F_{1}$ be vector bundles on $\mathbb{P}^{m}$ and $\mathbb{P}^{n}$ respectively and we have vector bundles $E_{2}$ and $F_{2}$ on $\mathbb{P}(E_{1})$ and $\mathbb{P}(F_{1})$ respectively. Suppose $\mathbb{P}_{\mathbb{P}(E_{1})}(E_{2})\cong\mathbb{P}_{\mathbb{P}{({F_{1}})}}(F_{2})$. Since $A^{\ast}(\mathbb{P}_{\mathbb{P}(E_{1})}(E_{2}))\cong A^{\ast}(\mathbb{P}^{m}\times\mathbb{P}^{\medspace\mathrm{rk}\medspace{E_{1}-1}}\times\mathbb{P}^{\medspace\mathrm{rk}\medspace E_{2}-1)}$ and $A^{\ast}(\mathbb{P}_{\mathbb{P}(F_{1})}(F_{2}))\cong A^{\ast}(\mathbb{P}^{n}\times\mathbb{P}^{\medspace\mathrm{rk}\medspace{F_{1}-1}}\times\mathbb{P}^{\medspace\mathrm{rk}\medspace F_{2}-1})$ as graded groups, Lemma \ref{lemma:lemma 4.1} shows that $\{m, \mathrm{rk}\medspace E_{1}-1,$ rk $E_{2}-1\}$ = $\{n,\mathrm{rk}\medspace F_{1}-1,\mathrm{rk}\medspace F_{2}-1\} $ as multisets.

Now let us consider the special case: rk$\medspace E_{2} = n+1$ and rk$\medspace F_{2} = m+1$. The
following theorem describes when  $ \mathbb{P}_{\mathbb{P}(E_{1})}(E_{2})$ can be isomorphic to  
$ \mathbb{P}_{\mathbb{P}(F_{1})}(F_{2})$. This can be regarded as a generalization of the $r = 2$ case of Theorem 4.4.

\begin{theorem} \label{theorem:theorem 4.5}
Let $m,n,r$ be positive integers with $m<n$. Let $ E_{1} $ and $F_{1}$  be vector bundles of rank $r+1$ over $ \mathbb{P}^m,\mathbb{P}^n$ respectively and   $ E_{2},F_{2}$ be vector bundles of rank $ n+1,m+1$ over $ \mathbb{P}(E_{1}), \mathbb{P}(F_{1})$ respectively. Suppose $ \mathbb{P}_{\mathbb{P}(E_{1})}(E_{2}) \cong \mathbb{P}_{\mathbb{P}(F_{1})}(F_{2})$. Then:  
\begin{enumerate}
\item[(i)] If $ r \neq m,n $ then all $ E_{i}$'s, $F_{j}$'s are trivial up to  line bundle twists. 
\item[(ii)] If $ r=m $, then $ E_{2}, F_{1}$ are trivial upto line bundle twists. There is a trivialization of $F_1$ such that if $ \mathbb{P}(F_{1})= \mathbb{P}^n \times \mathbb{P}^m \xlongrightarrow{ \pi} \mathbb{P}^m$  denotes the second  projection, then $ F_{2} = \pi^{\ast} E_{1} \bigotimes L$ for some line bundle $L$ on $ \mathbb{P}^n \times  \mathbb{P}^m .$
\item[(iii)] If $ r=n $ then $(ii)$ holds with $m$ replaced by n and $E_{i}$ replaced by $F_{i}$. 

\end{enumerate}
\end{theorem}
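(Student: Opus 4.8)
The plan is to compute the Chow ring of the common total space $W$ from both projective‑bundle‑tower structures, extract from this the discrete data (the change of basis on $A^{1}(W)$ and the total Chern classes of the four bundles), and then convert that data into the stated geometric conclusions by the ``fibrewise isomorphism $\Rightarrow$ isomorphism of $\mathrm{PGL}$‑bundles'' mechanism used throughout Section 3.

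First I would set up notation. Let $x,y,z\in A^{1}(W)$ be the pullbacks of $\mathcal{O}_{\mathbb{P}^{m}}(1)$ and $\mathcal{O}_{\mathbb{P}(E_{1})}(1)$ together with the class $\mathcal{O}_{\mathbb{P}_{\mathbb{P}(E_{1})}(E_{2})}(1)$, and let $t,u,v$ be the analogous classes for the second tower. Applying the projective‑bundle formula (conventions, point $3$(c)) twice gives
\[
A^{\ast}(W)\;\cong\;\frac{\mathbb{Z}[x,y,z]}{\bigl(x^{m+1},\,R_{1}(x,y),\,R_{2}(x,y,z)\bigr)}\;\cong\;\frac{\mathbb{Z}[t,u,v]}{\bigl(t^{n+1},\,S_{1}(t,u),\,S_{2}(t,u,v)\bigr)},
\]
where $R_{1},S_{1}$ are homogeneous of degree $r+1$, monic in $y$ resp.\ $u$ and record $c(E_{1}),c(F_{1})$, while $R_{2},S_{2}$ are homogeneous of degrees $n+1$ resp.\ $m+1$, monic in $z$ resp.\ $v$ and record $c(E_{2}),c(F_{2})$ (with the Chern classes of $E_{2},F_{2}$ written as bounded‑degree polynomials in $x,y$ resp.\ $t,u$). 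The monomials $x^{i}y^{j}z^{k}$ with $0\le i\le m$, $0\le j\le r$, $0\le k\le n$, and the monomials $t^{a}u^{b}v^{c}$ with $0\le a\le n$, $0\le b\le r$, $0\le c\le m$, are $\mathbb{Z}$‑bases of $A^{\ast}(W)$; in particular $\{x,y,z\}$ and $\{t,u,v\}$ are two bases of the free group $A^{1}(W)$, hence related by a matrix in $\mathrm{GL}_{3}(\mathbb{Z})$.

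The heart of the argument — and the step I expect to be hardest — is the linear‑algebra analysis, which runs parallel to the proof of Theorem 4.4 but now with three generators. Writing $x=\alpha t+\beta u+\gamma v$ and, dually, $t=\alpha'x+\beta'y+\gamma'z$, one uses that $m<n$ makes $x^{m+1}=0$ a relation of strictly smaller degree than $t^{n+1}=0$: expanding $(\alpha t+\beta u+\gamma v)^{m+1}$, reducing modulo the relations of degree at most $m+1$ (so $t^{n+1}$ is irrelevant), and comparing against the explicit monomial basis in degree $m+1$ forces strong conditions on $\alpha,\beta,\gamma$ and shows the relevant one of $S_{1},S_{2}$ is an $(m+1)$‑st power of a linear form; the dual expansion of $t^{n+1}=0$ in the first presentation (where $R_{2}$ now enters) yields the complementary constraints. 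Carrying this out while splitting into the cases $r\notin\{m,n\}$, $r=m$, $r=n$ determines the $\mathrm{GL}_{3}(\mathbb{Z})$‑matrix up to the unavoidable ambiguities (signs, relabelling, and tensoring $E_{i},F_{j}$ by line bundles), and, via the Fact in the conventions section, shows that in case (i) each of $c(E_{1}),c(F_{1}),c(E_{2}),c(F_{2})$ becomes trivial after a line‑bundle twist, while in case (ii) $c(F_{1})$ is an $(m+1)$‑st power of a linear form, so $F_{1}$ is trivial up to twist.

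Finally I would pass from this Chow‑theoretic data to geometry: triviality of a total Chern class does not force triviality of the bundle, so one must use the isomorphism $\phi$ itself. In each case the matrix computation identifies, after the allowed twists, how the base maps of the two towers interact — it shows the relevant comparison morphism is fibrewise given by $\mathcal{O}(1)$, hence restricts to an isomorphism of projective spaces on every fibre, hence (by Lemma 3.2) is an isomorphism, hence an isomorphism of $\mathrm{PGL}$‑bundles (conventions, point $3$(b)); Lemma 3.1 then propagates triviality down the towers. This gives in case (i) that $E_{1},F_{1}$ and then $E_{2},F_{2}$ are all trivial up to twist (so $W\cong\mathbb{P}^{m}\times\mathbb{P}^{r}\times\mathbb{P}^{n}$). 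In case (ii) the analysis above already gives $F_{1}$ trivial up to twist, hence $\mathbb{P}(F_{1})\cong\mathbb{P}^{n}\times\mathbb{P}^{m}$; choosing this trivialization so that the first tower's base map $W\to\mathbb{P}^{m}$ coincides with $W\to\mathbb{P}^{n}\times\mathbb{P}^{m}\xrightarrow{\pi}\mathbb{P}^{m}$ (possible by the matrix computation), one regards both towers as height‑$2$ towers over the common base $\mathbb{P}^{m}$, with first‑stage fibre dimensions $r=m$ and $n\neq m$. Lemma 3.8 then applies verbatim and gives $E_{2}\cong p^{\ast}(\mathcal{O}_{\mathbb{P}^{m}}^{n+1})\otimes L$ (so $E_{2}$ is trivial up to twist) and $F_{2}\cong\pi^{\ast}E_{1}\otimes M'$ for line bundles $L,M'$, which is exactly (ii); and (iii) is (ii) after interchanging $(m,E_{\bullet})\leftrightarrow(n,F_{\bullet})$. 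Besides the linear‑algebra step, the remaining delicate point is verifying in case (ii) that the trivialization of $F_{1}$ can be chosen so that the two base maps to $\mathbb{P}^{m}$ literally agree, which is what makes Lemma 3.8 applicable.
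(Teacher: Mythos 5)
Your overall frame --- present $A^{\ast}(W)$ in two ways, relate the two bases of $A^{1}(W)$ by a matrix in $\mathrm{GL}_{3}(\mathbb{Z})$, expand $x^{m+1}=0$ against an explicit monomial basis, then upgrade the numerical data to geometry --- is the same as the paper's. But the proposal leaves the two genuinely hard points unaddressed, and at one of them the strategy as stated would fail.

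First, the claim that the coefficient comparison ``determines the $\mathrm{GL}_{3}(\mathbb{Z})$-matrix up to unavoidable ambiguities'' is not correct. Writing $x=at+b_{1}u_{1}+b_{2}u_{2}$, the paper's analysis splits on the single entry $b_{2}$ (the cases $b_{2}=0$, $b_{2}=1$, $b_{2}\geq 2$), and the Chow-ring computation alone cannot eliminate $b_{2}\geq 2$ when $m=r=1$: the divisibility argument that kills $b_{2}\geq 2$ for $m\geq 2$ (namely $s_{2}^{m}\mid m+1$ forcing $s_{2}=1$, hence $b_{2}\mid b_{1}$ and $b_{2}\mid a$, contradicting $A\in\mathrm{GL}_{3}(\mathbb{Z})$) gives nothing for $m=1$. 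The paper has to import the Occhetta--Wisniewski theorem [10, Theorem 2] to handle this residual case, producing a common curve base for $\mathbb{P}(E_{1})$ and $\mathbb{P}(F_{1})$ and then running Theorem 3.9 and Sato's theorem. Nothing in your outline supplies this, and no amount of comparing coefficients will.

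Second, the step ``$c(F_{1})$ is an $(m+1)$-st power of a linear form, so $F_{1}$ is trivial up to twist'' is a non sequitur: $c(F_{1}\otimes L)=1$ does not imply $F_{1}\otimes L$ is trivial (the paper is explicit about this obstruction in Corollary 4.3, where only stable triviality can be concluded in the topological setting). The paper's actual route to triviality of $F_{1}$ in case (ii) is geometric: when $b_{2}=0$ the class $x$ descends to $\mathbb{P}(F_{1})$ and yields a morphism $\pi:\mathbb{P}(F_{1})\to\mathbb{P}^{m}$; one then shows, using factorization of maps out of $\mathbb{P}^{r}$, Theorem 3.9 applied to the fibres, and $\mathrm{Br}(\mathbb{P}^{m})=0$, that $\pi$ is a Zariski-locally-trivial $\mathbb{P}^{n}$-bundle, so $\mathbb{P}(F_{1})$ carries two projective bundle structures and Sato's theorem (Lemma 3.7(i)) gives triviality. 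Your ``fibrewise $\mathcal{O}(1)$ implies $\mathrm{PGL}$-bundle isomorphism'' mechanism is the right tool once the two towers sit over a common base, and your reduction to Lemma 3.8 at that point is exactly what the paper does; but the mechanism does not produce the second bundle structure, which is where the work is. A parallel gap occurs in case (i), where the paper needs the chain Theorem 3.12 $\rightarrow$ Corollary 3.14 $\rightarrow$ Sato $\rightarrow$ Corollary 3.13, resting on Fujita's cancellation and product-splitting results, none of which is replaced by the Chern-class computation.
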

(In \textit{(ii)}, $ \mathbb{P}(F_{1})$ is identified with $\mathbb{P}^n \times \mathbb{P}^m$ via the stated trivialization of $F_1$.)
\begin{proof}
Let $p_{1}, p_{2},p, q_{1}$, $q_{2}$, and $q$ be the natural projections and $\phi$ an isomorphism as in the following diagram,
\begin{center}
\begin{tikzcd}[sep=large]
\mathbb{P}_{\mathbb{P}(E_{1})}(E_{2})\arrow[d,"p_2"] 
\arrow[bend right=60,swap]{ddrl}{p} \arrow[r,swap]{ur}{\sim}[swap]{\phi}
&{\mathbb{P}_{\mathbb{P}(F_{1})}(F_{2})} = X \arrow[bend left=60,swap]{ddrl}[swap]{q} \arrow[d,swap,"q_2"] \\  
\mathbb{P}(E_{1})  \arrow[d,"p_1"] &\mathbb{P}(F_{1}) \arrow[d,swap,"q_1"]\\ \mathbb{P}^{m} &\mathbb{P}^{n}
\end{tikzcd}
\end{center}
Let $t =\mathcal{O}_{\mathbb{P}^{n}}(1),$ $ u_{1} = \mathcal{O}_{\mathbb{P}(F_{1})}(1),$ $u_2= \mathcal{O}_{\mathbb{P}(F_{2})}(1),\,\  x= \mathcal{O}_{\mathbb{P}^{m}}(1),\,\ y_1=\mathcal{O}_{\mathbb{P}(E_{1})}(1),$ and $ y_2=\mathcal{O}_{\mathbb{P}(E_{2})}(1),$ all considered in $A^{1}(X)$ via pullback by the natural projections and $\phi$. Since $\{x, y_1, y_2\}$ and $\{t, u_1, u_2\}$ are both bases of $A^1(X)$ there is $A\in$ GL$_{3}{(\mathbb{Z})}$ such that 
\begin{gather}
\begin{pmatrix}
    x \\
    y_{1}\\
    y_{2}
\end{pmatrix}
=
A
\begin{pmatrix}
    t \\
    u_{1}\\
    u_{2}
\end{pmatrix}, 
\end{gather}
Let $a, b_{1}, b_{2}, \alpha, \beta_{1}, \beta_{2}\in\mathbb{Z}$ be such that 
$A$ looks like $ \begin{pmatrix}
    a & b_{1} & b_{2} \\
    \alpha & \beta_{1} & \beta_{2}\\
    \ast & \ast & \ast
\end{pmatrix}$ 
\,\ ; .

As in the proof of Theorem \ref{theorem:theorem 4.2}, there are homogeneous degree $r+1$ integral polynomials $f_1(Y_1,X),$ $ g_1(U_1,T)$ such that $Y_1^{r+1}, U_1^{r+1}$ have coefficients $1$ in $f_1, g_1$ respectively, and $f_1(1, -x)=(p\circ\phi^{-1} )^*c(E_1),$ $ g_1(1,-t)=q^*c(F_1).$ Also, there is homogeneous degree $m+1$ integral polynomial $g_2(U_2, U_1, T)$ with $U_2^{m+1}$ having coefficient $1$ and $g_2(1,-u_2,-t)=q_2^*c(F_2).$ Twisting  $E_1,$ $F_1$ by line bundles we can also make the following assumption, by the fact in point 4 of the notations and conventions section:
\\\\
\textbf{Assumption ($\ast$)}: If $f_1(Y_1, X)=(Y_1-cX)^{r+1}$ for some integer $c$, then $c=0.$  If $g_1(U_1, T)=(U_1-dT)^{r+1}$ for some integer $d$, then $d=0.$

For $z\in \mathbb{P}(F_1)$ we have $q_{2}^{-1}(z)\cong \mathbb{P}^{m}$ and $x|_{q_{2}^{-1}(z)} = \mathcal{O}_{q_{2}^{-1}(z)}(b_{2})$ is globally generated. So $b_{2}\geq 0$. We consider three cases separately.
\\\\
{\textbf{Case 1:}} \textbf{$b_2=0.$}

We have $x|_{q_{2}^{-1}(z)} = \mathcal{O}_{q_{2}^{-1}(z)}(b_{2})$ is trivial, for each $z\in \mathbb{P}(F_1)$. So, there is a map $\mathbb{P}(F_{1})\xlongrightarrow{\pi}\mathbb{P}^m$ making the following diagram commute,
\begin{center}
\begin{tikzcd}[sep=large]
\mathbb{P}_{\mathbb{P}(E_1)}(E_2) \arrow[d,"p_{2}"] & \cong & \mathbb{P}_{\mathbb{P}(F_1)}(F_2) \arrow[d,"q_2"] \\  \mathbb{P}(E_1)  \arrow[rd,"p_1"]   &    &\mathbb{P}(F_{1}) \arrow[dl,swap]{ur}{\pi} \\ & \mathbb{P}^m
\end{tikzcd}
\end{center}

If $r>m$, then each fiber of $\mathbb{P}(F_{1})\xlongrightarrow{q_{2}} \mathbb{P}^{n}$ must be contracted by $\pi$, as any map from $\mathbb{P}^{r}$ to $\mathbb{P}^{m}$ is constant for $r>m$. So $\pi$ factors as $\pi: \mathbb{P}(F_{1})\xlongrightarrow{q_{1}}\mathbb{P}^{n}\longrightarrow\mathbb{P}^{m}$. But the last map $\mathbb{P}^{n}\longrightarrow\mathbb{P}^{m}$ must be constant, as $n>m$. This shows $\pi\circ q_{2}$ is constant, hence $p$ is constant, which is a contradiction. Hence $r\leq m.$ 

For $z\in \mathbb{P}^{m}$, we have $p^{-1}(z)$ is a $\mathbb{P}^{n} -$bundle over $\mathbb{P}^{r}$, ${(\pi\circ q_{2})^{-1}(z)}$  a $\mathbb{P}^{m} -$bundle over $\pi^{-1}(z)$ and via $\phi$ we have $p^{-1}(z)\cong q^{-1}(z)$. So, $\pi^{-1}(z)$ is a smooth projective variety, and since $r\leq m$ and $m\neq n$ we have $\pi^{-1}(z)\cong\mathbb{P}^{n}$ and $r = m$ by Theorem \ref{theorem:theorem  3.9}. So, $\pi^{-1}(z)\cong\mathbb{P}^{n}$ for all $z\in\mathbb{P}^{m}.$ This, together with the fact that Br$(\mathbb{P}^{m}) = 0$, shows that $\pi$ is a Zariski locally trivial $\mathbb{P}^{n} - $bundle. So, $\mathbb{P}(F_{1})$ has a stucture of $\mathbb{P}^{m}-$bundle over $\mathbb{P}^{n}$, and also a structure of $\mathbb{P}^{n} -$bundle over $\mathbb{P}^{m}$. By Lemma \ref{lemma:lemma 3.7}(i)(Theorem A of \ref{9})we get $F_{1}$ is trivial with a line bundle twist and we can choose a trivialization of $F_1$ such that $\pi: \mathbb{P}(F_{1}) = \mathbb{P}^{n}\times\mathbb{P}^{m}\longrightarrow \mathbb{P}^{m}$ is the second projection. So, $\mathbb{P}(F_{1}) = \mathbb{P}_{\mathbb{P}^{m}}(\mathcal{O}_{\mathbb{P}^m}^{\bigoplus{n+1}})$ via $\pi.$ Now, by Lemma \ref{lemma:lemma 3.8}, $E_{2} = p_{1}^{\ast}\mathcal{O}^{n+1}\otimes M$ and  $F_{2} = \pi^{\ast}E_{1}\otimes L$ for line bundles $M, L$ on $\mathbb{P}(E_{1})$ and $\mathbb{P}(F_{1})$ respectively, hence $(ii)$ holds.
\\
\\
{\textbf{Case 2:}} $b_{2} = 1$.
\noindent
In this case ($ p\circ {\phi^{-1}}, q_{2}) = \Phi: X\longrightarrow \mathbb{P}^{m}\times\mathbb{P}(F_{1})$ is an isomorphism by Lemma 3.2, as each fibre is Spec $ k$. This is an isomorphism over $\mathbb{P}(F_{1})$, so $F_{2}$ is trivial with a line bundle twist. So, $\mathbb{P}_{\mathbb{P}(E_{1})}(E_{2})\cong \mathbb{P}^{m}\times\mathbb{P}(F_{1})$. As $m\neq n, $ by Theorem \ref{theorem:theorem 3.12} we get $\mathbb{P}_{\mathbb{P}^{m}}(E_{1}) = \mathbb{P}^{m}\times Z$ for some smooth projective variety $Z$. So, by Corollary \ref{lemma:lemma 3.14}, $E_{1}$ is trivial upto a line bundle twist, and $Z = \mathbb{P}^{r}$. The Theorem 3.12 also shows that $\mathbb{P}(F_{1})$ has a $\mathbb{P}^{n} - $bundle structure over $\mathbb{P}^{r}$.
\\
\\
{\textbf{Subcase 1:}} $r\neq n$.
We see that $\mathbb{P}_{\mathbb{P}^n}(F_{1})$ has a $\mathbb{P}^{r} -$bundle structure over $\mathbb{P}^{n}$ as well as a $\mathbb{P}^{n} -$bundle structure over $\mathbb{P}^{r}$. Since $r\neq n$, Lemma 3.7(i) shows $F_{1}$ is trivial up to a line bundle twist. So, $\mathbb{P}_{\mathbb{P}^{m}\times\mathbb{P}^{r}}(E_2)\cong \mathbb{P}^{m}\times\mathbb{P}^{n}\times\mathbb{P}^{r}$ as $E_1$, $F_{1}$ and $F_{2}$ are trivial upto line bundle twists. So by Corollary 3.13, $E_{2}$ is trivial up to a line bundle twist, so $(ii)$ holds. 
\\\\
{\textbf{Subcase 2:}} $ r = n$.
\noindent
Define a morphism $\tau_{1}$ by the following commutative diagram,
\begin{center}
\begin{tikzcd}
\mathbb{P}_{\mathbb{P}^{m}\times\mathbb{P}^{n}}(E_{2}) \arrow[r,swap]{ur}{\sim}[swap]{\phi} \arrow[d, "p_{1}"]
& \mathbb{P}^{m}\times\mathbb{P}_{\mathbb{P}^{n}}(F_{1}) \arrow[ld, "\tau_1"] \\
\mathbb{P}^{m}\times\mathbb{P}^{n}.
\end{tikzcd}
\end{center}
So, $\tau_1$ is a $\mathbb{P}^n-$bundle. By Lemma 3.5, there are smooth projective varieties $T,$ $Z$, an isomorphism $\psi:\mathbb{P}^{m}\times\mathbb{P}^{n}\to T\times Z$, surjective maps $h_1:\mathbb{P}^m\to T$, $h_2: \mathbb{P}_{\mathbb{P}^n}(F_1)\to Z$ such that $\psi\circ \tau_1=h_1\times h_2$. 
\begin{center}
\begin{tikzcd}
\mathbb{P}^{m} \times \mathbb{P}_{\mathbb{P}^n}(F_1)  \arrow[d, "\tau_1"] \arrow[rd, "h_1 \times h_2"] \\
\mathbb{P}^{m}\times \mathbb{P}^{n}   \arrow[r,swap]{ur}{\sim}[swap]{\psi} &  T \times Z
\end{tikzcd}
\end{center}
As $\tau_1$ is a $\mathbb{P}^n-$bundle, we see that $\psi\circ\tau_1$ is also a $\mathbb{P}^n-$bundle, so fibres of $\psi\circ\tau_1$ are $\mathbb{P}^n$. So, fibres of $h_1\times h_2$ are $\mathbb{P}^n$. Since $m\neq n$, we get that $h_1$ is an isomorphism and each fibre of $h_2$ is $\mathbb{P}^n$. So, $T=\mathbb{P}^m$, and $\mathbb{P}^{m}\times\mathbb{P}^{n}\cong T\times Z$ implies $Z=\mathbb{P}^n$ by Lemma \ref{lemma:lemma 3.6}. Since automorphisms of $\mathbb{P}^{m}\times\mathbb{P}^{n}$ preserves the factors, we see that $\tau_1=\eta\times\tau_2$ for some automorphism $\eta$ of $\mathbb{P}^m$, and a $\mathbb{P}^{n} -$bundle structure $\tau_{2}:\mathbb{P}_{\mathbb{P}^{n}}(F_{1})\longrightarrow\mathbb{P}^{n}$, which means $\tau_2$ the composition $\tau_2:\mathbb{P}_{\mathbb{P}^{n}}(F_{1})\cong\mathbb{P}_{\mathbb{P}^{n}}(F_{1}')\to \mathbb{P}^n$, where $F_1' $ is a rank $n+1$ vector bundle on $\mathbb{P}^n$, the first map is an isomorphism, not necessarily over $\mathbb{P}^n$, and the second map is the natural projection. Changing the trivialization of $F_2$ if necessary, we may assume that $\eta=id$. 
We have the following commutative diagram,  
\begin{center}
\begin{tikzcd}[sep=large]
\mathbb{P}_{\mathbb{P}^{m}\times\mathbb{P}^{m}}(E_{2})  \arrow[rr,swap]{ur}{\sim}[swap]{\phi} \arrow[d,"p_{1}"] &  &{\mathbb{P}^{m}\times\mathbb{P}_{\mathbb{P}^{n}}(F_{1})} \arrow[d] \\  \mathbb{P}^{m}\times\mathbb{P}^{n}  \arrow[rd,"\tau"]   &    &\mathbb{P}_{\mathbb{P}^{n}}(F_{1}) \arrow[dl,swap]{ur}{\tau_2} \\ & \mathbb{P}^n
\end{tikzcd}
\end{center}
where $\tau$ is the projection onto the second factor. By Lemma \ref{lemma:lemma 3.8}, $E_{2} = \pi^{\ast}F_{1}^{'}\otimes L_{1}$ for some line bundle $L_{1}$. We also have $\mathbb{P}_{\mathbb{P}^{n}}(F_{1}^{'})\cong\mathbb{P}_{\mathbb{P}^{n}}(F_{1})$. By Lemma \ref{lemma:lemma 3.7}(ii), $F_{1}^{'}\cong \psi^{\ast}F_{1}\otimes M$ for some automorphism $\psi$ of $\mathbb{P}^{n}$ and a line bundle $M$. So, $E_{2} = \pi^{\ast}\psi^{\ast}F_1\otimes L$ for some line bundle $L$ and automorphism $\psi$ of $\mathbb{P}^{n}$. By changing the trivialization of $E_{1}$, we can assume that $\psi$ is the identity morphism, so $(iii)$ holds.
\\
\\
{\textbf{Case 3:}} $b_{2}\geq 2$.

There are $Q_{1}({U_{2}, U_{1},T}), Q_{2}({U_{2}, U_{1},T})$  and $ R(U_{2}, U_{1}, T)\in\mathbb{Z}[U_{2}, U_{1}, T]$ such that $(aT+b_{1}U_{1}+b_{2}U_{2})^{m+1} = Q_{1}g_{1}+ Q_{2}g_{2}+R$, where $Q_{1}g_{1}, Q_{2}g_{2}$ and $R$ are homogeneous of degree $m+1$, deg$_{U_{1}}\medspace R\leq r$, deg$_{U_{2}}\medspace R\leq m,$ deg$_{T}\medspace R\leq m+1\leq n $. Since $(at+ b_{1}u_{1}+ b_{2}u_{2})^{m+1} = x^{m+1} = 0$ and $g_{1}(u_{1}, t) = g_{2}(u_{2}, u_{1}, t) = 0$, so we get $R(u_{2}, u_{1}, t) = 0$.

Since $\{t^{c_{1}}u_{1}^{c_{2}}u_{2}^{c_{3}}\medspace|\medspace c_{1}\leq n, c_{2}\leq r, c_{3}\leq m \}$ is linearly independent we get $R = 0$. So,
\begin{equation}
(aT+b_{1}U_{1}+b_{2}U_{2})^{m+1} = Q_{1}(U_2, U_1, T)g_{1}(U_1,T)+Q_{2}g_{2}(U_2, U_1, T).
\end{equation}
 By comparing the coefficients of $U_{2}^{m+1}$ in (8) we get $Q_2\equiv b_{2}^{m+1}$. So, 
\begin{equation}
(aT+ b_{1}U_{1}+ b_{2}U_{2})^{m+1} = Q_{1}g_{1}+b_{2}^{m+1}g_{2}.
\end{equation}
\\
If $r>m,$ we have deg$\medspace g_{1} = r+1 > m+1,$ so $Q_{1} = 0$. So,
\begin{equation*}
(\frac{b_{1}}{b_{2}}U_{1} + U_{2} + \frac{a}{b_{2}}T)^{m+1} = g_{2}\in \mathbb{Z}[U_{1}, U_{2}, T].     
\end{equation*}
By looking at the coefficients of $U_{1}^{m+1}$ and $T^{m+1}$ we get $\frac{b_{1}}{b_{2}}$ and $\frac{a}{b_{2}}\in \mathbb{Z}$, so $b_{2}$ divides $b_{1}$ and $a$. Since $A\in$ GL$_{3} (\mathbb{Z})$, we have gcd$(b_{1},b_{2},a) = 1$, so $b_{2}$ must be 1, which is a contradiction. So, $r\leq m$.

{\textbf{Subcase 1}} : $r \less m$

We have $$ 0= f_{1}(y_1,x)= f_{1}(\alpha t+ \beta_{1}u_{1}+ \beta_{2} u_{2}, at+ b_{1}u_{1}+b_{2}u_{2}) .$$ Hence $ f_{1}( \alpha  T + \beta_{1}U_{1}+\beta_{2}U_{2}, aT +b_{1}U_{1}+ b_{2}U_{2})$ is in the ideal generated by  \\$( T^{n+1}, g_{1}(U, T), g_{2}(U_{2},U_{1} ,T))$. Since deg $f_{1}=$ deg $g_{1} $ is smaller than both deg $g_{2}$ and deg $(T^{n+1})$, we see that  there is $\lambda \in \mathbb{Z}$ such that $$ f_{1}(\alpha T+ \beta_{1}U_{1}+ \beta_{2} U_{2}, aT+ b_{1}U_{1}+b_{2}U_{2})= \lambda g_{1} .$$

Write $$ f_{1}(Y_{1},X)= \prod_{i=1}^{r+1}(Y_{1}-\lambda_{i}X), \mathrm{ where} 
\medspace  \lambda_{i}\in  \mathbb{C} .$$
So, $$ \prod_{i=1}^{r+1} ((\alpha - \lambda_{i}a )T + ( \beta_{1}-\lambda_{i}b_{1} )U_{1} + (\beta_{2}-\lambda_{i}b_{2})U_{2})=\lambda g_{1}(U_{1},T)   $$ is independent of $ U_{2}$. This forces  $\beta_{2} = \lambda _{i}b_{2}\medspace$ for all  $i$. Since $ b_{2}\neq 0$, we see that all $ \lambda_{i}$'s are same and equal $ \frac{\beta_{2}}{b_{2}} .$ So, $ f_{1}(Y_{1},X) = ( Y_{1}-cX)^{r+1}$  for some $ c\in \mathbb{Q}$. Since $ f_{1} \in \mathbb{Z}[ X,Y]$, we get $ c\in \mathbb{Z}.$By assumption ($\ast$), $c=0$. So, $ f_{1} = Y_{1} ^{r+1}$. So $$ (\alpha T + \beta_{1}U_{1}+ \beta_{2}U_{2})^{r+1}= \lambda g_{1}(U_1,T) $$  is independent of $ U_{2}$ hence $ \beta_{2}=0  $. We have  $ \lambda g_{1}(U_{1},T)= (\alpha T+ \beta_{1}U_{1})^{r+1}$. Hence  $g_{1}(U_{1},T)= (U_{1}+c_{1}T)^{r+1} $  for some $ c_{1} \in \mathbb{Z}$, as coefficient of $U_1^{r+1}$ in $g_1$ is $1$ . By assumption ($\ast$) $c_1=0$, hence $g_{1}(U_1, T)= U_{1}^{r+1}$. So $ \lambda U_{1}^{r+1}= ( \alpha T + \beta _{1}U_{1})^{r+1}$, hence $\alpha =0 .$ By $(9)$  we have $$(aT+b_{1}U_{1}+b_{2}U_{2})^{m+1}= Q_{1}(\underline{U}, T) U_{1}^{r+1}+ b_{2}^{m+1}g_{2}(U_{2},U_{1},T). $$ Let $p \mid b_{2}$ be a prime. We get $$ (aT+b_{1}U_{1} )^{m+1}\equiv Q_{1} ( \underline{U} , T)U_{1}^{r+1} (mod \medspace p) .$$  So, $ U_{1}\mid (\Bar{a}T+\Bar{b_{1}}U_{1})^{m+1}$ in $\mathbb{F}_{p}[ U, T]$ where $ \Bar{a},\Bar{b_{1}} $ are the 
 residue class of $a,b_{1}$ mod $p$. This show $ \Bar{a}^{m+1},=0 $ so, $ \Bar{a}=0$. So if $ \Bar{A} $ is the image of $A$ in GL$_{3}(\mathbb{F}_{p})$, then we have 
 \begin{gather*}
\Bar{A}
=
 {\begin{pmatrix}
    0 & \ast & 0 \\
    0 & \ast & 0 \\
    \ast & \ast & \ast
\end{pmatrix}}
\end{gather*}
Clearly such an  $\Bar{A} $ can not be invertible,  a contradiction. \\\\
{\textbf{Subcase 2:}} $ r=m$\\ 
 In this case, we have $ Q_{1}\equiv \lambda $ for some $ \lambda \in \mathbb{Z} $ in $(9) $. So
\begin{equation} 
  (aT+b_{1}U_{1}+b_{2}U_{2})^{m+1}= \lambda g_{1}(U_{1}, T)+ b_{2}^{m+1}g_{2}(U_{2},U_{1},T).
  \end{equation}
The coefficient of $ U_{2}U_{1}^{m}$ on left hand side of (10) is $(m+1)b_{2}b_{1}^m$, and on right hand side this coefficient is divisible of by $ b_{2}^{m+1}$. 
So, $$ b_{2}^{m+1}\mid (m+1)b_{2}b_{1}^{m} \Rightarrow (m+1)(\frac{b_{1}}{b_{2}})^{m}\in \mathbb{Z}.$$   
If $ \frac{b_{1}}{b_{2}}= \frac{s_{1}}{s_{2}}$ with $ s_{1},s_{2} \in \mathbb{Z}$, $s_{2}\geq 1 $, gcd$ (s_{1},s_{2})=1$ then we get $ s_{2}^{m}\mid m+1$. So, $s_{2}^{m}\leq m+1$. 

 If $ m\geq 2 $, this forces that $s_{2}=1$, as otherwise $ m+1\geq 2^{m}$, which is impossible. So, $b_{2} \mid b_{1}$. Similarly, looking at the coefficient of $ U_{2}T^{m}$ in both sides of $(9)$ we get $ b_{2}\mid a$. Now $ A \in$ GL$_{3}(\mathbb{Z})$  forces $ b_{2}=1$, a contradiction. So $ m=1$.

  Note that $\mathbb{P}_{\mathbb{P}^1}(E_{1}), \mathbb{P}_{\mathbb{P}^{n}}(F_{1}) $ are of different dimensions, so they are not isomorphic.  Now by [\ref{10}, Theorem 2], we get $ \mathbb{P}_{\mathbb{P}^1}(E_{1}), \mathbb{P}_{\mathbb{P}^{n}}(F_{1}) $ are projective bundles over a smooth curve $ C$. Let

\hspace{35mm}
\xymatrix{ \mathbb{P}_{\mathbb{P}^1}(E_{1})  \ar[rd]_{p^{'}_{1}}   &  &\mathbb{P}_{\mathbb{P}^{n}}(F_{1})  \ar[ld]^\pi \\ & C}\\\\
be the projective bundle structures. An application of Theorem \ref{theorem:theorem 3.9} shows that $C \cong \mathbb{P}^1.$ Now by Lemma \ref{lemma:lemma 3.7}(i) (Theorem A of \ref{9}) $ F_{1}$ is trivial upto line bundle twist and we can choose trivialization of $ F_{1}$, so that $\pi: \mathbb{P}_{\mathbb{P}^n}(F_{1})= \mathbb{P}^n \times \mathbb{P}^1 \longrightarrow \mathbb{P}^1$ is the second projection. By [\ref{10}, Theorem 2], 
\begin{center}
\begin{tikzcd}
X \arrow[r, "p_{2}\circ \phi^{-1}"   ] \arrow[d, "q_{2}"]
&  \mathbb{P}_{\mathbb{P}^1}(E_{1})\arrow[d, "{p_{1}}'" ] \\
\mathbb{P}^n\times \mathbb{P}^1 \arrow[r, "\pi"]
& |[, rotate=0]|  \mathbb{P}^1 
\end{tikzcd}
\end{center}
is cartesian, so $ \mathbb{P}_{\mathbb{P}(E_{1})}(E_{2}) \cong \mathbb{P}(E_{1})\times \mathbb{P}^n $ over $ \mathbb{P}(E_{1})$. So, $ E_{2}$ is trivial with line bundle twists.

If $E_{1}$ is trivial up to line bundle twist, then $ \mathbb{P}_{\mathbb{P}^{1}\times \mathbb{P}^n}(F_{2})\cong \mathbb{P}^1 \times \mathbb{P}^n \times \mathbb{P}^1$, so by Corollary \ref{lemma: lemma 3.13}, $ F_{2}$ is also trivial up to line bundle twist. So,  $(i)$ holds.

If $ E_{1}$ is not trivial up to line bundle twist, then by the Theorem A of \ref{9}, $ p_{1}'= \psi \circ p_{1}$ for some automorphism $ \psi $ of $ \mathbb{P}^1$. So, since the above diagram is cartesian, so we get $ F_{2}= \pi^{\ast}\psi ^{\ast}E_{1}\bigotimes L $  for some line bundle. Changing the trivialization of the $ F_{1}$ we may assume that $ F_{2}= \pi^{\ast} E_{1}\bigotimes L$ for some line bundle $L$, so $(i)$ holds.\\\\
\textbf{Note:} Indeed, one can also check that if $(i)$ or $(ii)$ holds and $m=r=1 $, then $b_{2}=0 $ or 1. So the last case $ b_{2} \geq 2 $ can not hold.
\end{proof}
\end{section}

\section{Acknowledgement}We are grateful to Professor D.S. Nagaraj who urged us in several ways to do this work. Things we learnt from Suratno Basu in the National Centre of Mathematics workshop on Birational Geometry,  2023 summer, India helped us a lot in doing the work. We would also like to thank Jakub Witaszek, Souradeep Majumder, and János Kollár for giving valuable suggestions and references.\vskip 5mm
\noindent
\section{References}
\begin{enumerate}[label={[\arabic*]}]
\item \label{1} W.Fulton, \textit{Intersection theory, Second edition}, Springer-Verlag, Berlin, 1998.
\item \label{2} D.Eisenbud,  J.Harris, \textit{3264 and all that—a second course in algebraic geometry},
Cambridge University Press, Cambridge, 2016.
\item \label{3} R. Lazarsfeld {\it Some applications of the theory of positive vector bundles},  Lecture Notes in Math., 1092
Springer-Verlag, Berlin, 1984, 29–61.
\item \label{4}T. Fujita, {\it Cancellation problem of complete varieties}, Invent. Math. 64 (1981), no. 1,119–121.
\item \label{5} Kuroki, Shintarô, Suh, Dong Youp {\it Cohomological non-rigidity of eight-dimensional complex projective towers}, Algebra. Geom. Topol. 15 (2015), no. 2, 769–782.
\item  \label{6} C.Okonek, M.Schneider, H.Spindler,\textit {Vector bundles on complex projective spaces}, Progress in Mathematics, 3. Birkhäuser, Boston, MA, 1980.
\item \label{7}  K.Ueno, \textit{Algebraic geometry 2, Sheaves and cohomology.}, American Mathematical Society, Providence, RI, 2001.
\item \label{8} R. Hartshorne, \textit{Algebraic geometry}, Graduate Texts in Mathematics, No. 52. Springer-Verlag, New York-Heidelberg, 1977.
\item\label{9} Eiichi Sato,  \textit{Varieties which have two projective space bundle structures}, J. Math. Kyoto Univ. 25(3), 445-457 (1985).
\item \label{10} Gianluca Occhetta, Jaroslaw A. Wisniewski.: {\it On Euler-Jaczewski sequence and Remmert-Van de Ven problem for toric varieties}.\\ 
https://doi.org/10.48550/arXiv.math/0105166. 
\item \label{11} R.Lazarsfeld, \textit{Positivity in algebraic geometry I Classical setting:} line bundles and linear series, Ergebnisse der Mathematik und ihrer Grenzgebiete. 3. Folge. A Series of Modern Surveys in Mathematics 48, Springer-Verlag, Berlin, 2004.
\item \label{12} F.Hirzebruch,\textit {Topological methods in algebraic geometry}, Reprint of the 1978 edition, Classics in Mathematics, Springer-Verlag, Berlin, 1995.
\end{enumerate}

\pagebreak

\begin{flushleft}
{\scshape Indian Institute of Science Education and Research Tirupati, Rami Reddy Nagar, Karakambadi Road, Mangalam (P.O.), Tirupati, Andhra Pradesh, India – 517507.}

{\fontfamily{cmtt}\selectfont
\textit{Email address: ashimabansal@students.iisertirupati.ac.in} }
\end{flushleft}
\vspace{0.5mm}
\begin{flushleft}
{\scshape Fine Hall, Princeton, NJ 700108}.

{\fontfamily{cmtt}\selectfont
\textit{Email address: ss6663@princeton.edu} }
\end{flushleft}
\vspace{0.5mm}
\begin{flushleft}
{\scshape Indian Institute of Science Education and Research Tirupati, Rami Reddy Nagar, Karakambadi Road, Mangalam (P.O.), Tirupati, Andhra Pradesh, India – 517507.}

{\fontfamily{cmtt}\selectfont
\textit{Email address: shivamvats@students.iisertirupati.ac.in} }
\end{flushleft}

\end{document}